\theoremstyle{definition}
\newtheorem{thm}{Theorem}[section]
\newtheorem{thm*}{Theorem}
\newtheorem{defi}[thm]{Definition}
\newtheorem*{defi*}{Definition}
\newtheorem{lem}[thm]{Lemma}
\newtheorem{lem*}{Lemma}
\newtheorem{pro}[thm]{Proposition}
\newtheorem{pro*}{Proposition}
\newtheorem{cor}[thm]{Corollary}
\newcommand{\MC}[1]{\mathcal{#1}}
\newcommand{\MB}[1]{\mathbb{#1}}
\newcommand{\G}{\Gamma}
\DeclareMathOperator{\Spec}{Spec}
\DeclareMathOperator{\re}{Re}
\title
{
On symmetric spectra of Hermitian adjacency matrices for non-bipartite mixed graphs
}
\author{Yusuke Higuchi\thanks{
Department of Mathematics,  
Faculty of Science, Gakushuin University, Tokyo 171-8588, Japan.
\texttt{higuchi@math.gakushuin.ac.jp}
}
\and 
Sho Kubota\thanks{
Department of Applied Mathematics, Faculty of Engineering, Yokohama National University, 
Hodogaya, Yokohama 240-8501, Japan. \texttt{kubota-sho-bp@ynu.ac.jp}
}
\and
Etsuo Segawa\thanks{
Graduate School of Environment Information Sciences, Yokohama National University, 
Hodogaya, Yokohama 240-8501 Japan. \texttt{segawa-etsuo-tb@ynu.ac.jp}
}
}
\date{}
\begin{document}
\maketitle
\begin{abstract}
We study the equivalence between bipartiteness and symmetry of spectra of mixed graphs,
for $\theta$-Hermitian adjacency matrices defined by an angle $\theta \in (0, \pi]$.
We show that this equivalence holds when, for example, an angle $\theta$ is an algebraic number,
while it breaks down for any angle $\theta \in \MB{Q}\pi$.
Furthermore, we construct a family of non-bipartite mixed graphs having the symmetric spectra for given $\theta \in \MB{Q}\pi$.
\vspace{8pt} \\
{\it Keywords:} Symmetric spectrum, Hermitian adjacency matrix, mixed graph \\
{\it MSC 2020 subject classifications:} 05C50; 05C20
\end{abstract}

\section{Introduction}\label{Intro}

A {\it mixed graph} $G$ consists of a finite set $V$ of vertices
together with a subset $\MC{A} \subset V \times V \setminus \{ (x,x) \mid x \in V \}$
of ordered pairs called {\it arcs}.
Let $G = (V, \MC{A})$ be a mixed graph.
For $(x,y) \in \MC{A}$,
we denote $x \to y$ if $(x,y) \in \MC{A}$ but $(y,x) \not\in \MC{A}$.
The same is for $x \gets y$.
The symbol $x \leftrightarrow y$ means $(x,y) \in \MC{A}$ and $(y,x) \in \MC{A}$.
In this case, the unordered pair $\{x,y\}$ is called a {\it digon}.
A digon is equated with an undirected edge depending on the context.
The {\it underlying graph} of a mixed graph $G = ( V, \MC{A})$, denoted by $\G(G)$,
is the graph with the vertex set $V$ and the edge set $E = \{ \{x,y\} \mid (x,y) \in \MC{A} \text{ or } (y,x) \in \MC{A} \}$.
We say that a mixed graph with no digons is an {\it oriented graph},
and a mixed graph with at least one digon is {\it proper}.
For $\theta \in (0, \pi]$, 
the {\it $\theta$-Hermitian adjacency matrix} $H_{\theta} = H_{\theta}(G) \in \MB{C}^{V \times V}$ is defined by
\begin{equation}\label{hermit}
(H_{\theta})_{x,y} = \begin{cases}
1 \qquad &\text{if $x \leftrightarrow y$,} \\
e^{i \theta} \qquad &\text{if $x \to y$,} \\
e^{-i \theta} \qquad &\text{if $x \gets y$,} \\
0 \qquad &\text{otherwise,}
\end{cases}
\end{equation}
where $i$ is the imaginary unit.
The matrix $H_{\frac{\pi}{2}}$ is so-called the Hermitian adjacency matrix (of the first kind),
introduced by Liu--Li \cite{LL} and Guo--Mohar \cite{GM} independently.
The matrix $H_{\frac{\pi}{3}}$ is called the Hermitian adjacency matrix of the second kind,
introduced by Mohar \cite{M}.
Very recently, Yu et al \cite{YGZ} introduced $k$-generalized Hermitian adjacency matrices,
which correspond to the case $\theta = \frac{2\pi}{k}$,
and mixed graphs with their small ranks were studied.
The case $\theta = \pi$ can be regarded as the adjacency matrix of a signed graph.
For a general angle $\theta$, the authors in \cite{KST} introduced $H_{\theta}$
in the context of quantum walks,
and identification of mixed graphs by their spectra has been attempted.
Note that $H_{\theta}(\G(G))$
and the ordinary adjacency matrix of $\G(G)$ are equal for any angle $\theta$.
The multiset of eigenvalues of $H_{\theta}(G)$ is called {\it $H_{\theta}$-spectrum} of $G$
and denoted by $\Spec(G; \theta)$.
We say that $G$ has the {\it $\theta$-symmetric spectrum}
if the elements of $\Spec(G; \theta)$ are symmetric with respect to the origin, including their multiplicities.
Here we should remark that the Hermitian adjacency matrices stated above 
have been essentially introduced in more general forms: 
for example, as a perturbed operator on a graph, 
the properties of the discrete magnetic/twisted Laplacian and 
the twisted quantum walk are discussed in \cite{HS,KSS,LiLo,Su}  and \cite{HKSS}, 
respectively.
We give a short review for the magnetic/twisted adjacency operator 
in Section~\ref{MAM}. 
However, in this paper, 
we follow the setting and notation of the Hermitian adjacency matrix 
discussed in \cite{GM,LL,M}.

A mixed graph $G$ is said to be {\it bipartite} if its underlying graph $\G(G)$ is bipartite.
As is well known,
an undirected graph is bipartite if and only if it has the symmetric spectrum
in the sense of adjacency matrices.
This fact is derived from a particularity of real and non-negative matrices.
Therefore for $\theta$-Hermitian adjacency matrices in general,
the equivalence between bipartiteness and having the $\theta$-symmetric spectrum can be broken.
Indeed, as Liu--Li points out \cite{LL}, 
the $H_{\frac{\pi}{2}}$-spectrum of any oriented graph is always symmetric with respect to the origin.
For a mixed graph having digons,
Guo--Mohar \cite{GM} provided an example of a mixed graph
in which the equivalence breaks down for $\theta = \frac{\pi}{2}$.
The mixed graph provided by them is shown in Figure~\ref{0923-1}.
Mohar \cite{M} provides a mixed graph with 5 vertices that is non-bipartite
but has the $\frac{\pi}{3}$-symmetric spectrum.
In signed graphs, which correspond to $\theta = \pi$ in our setting,
a class called sign-symmetric has been studied,
so signed graphs with symmetric spectra have been substantially studied.
As seen in \cite{GHMM, S},
families of non-bipartite signed graphs but have symmetric spectra have been found.

\begin{figure}[ht]
\begin{center}
\begin{tikzpicture}
[scale = 0.7,
line width = 0.8pt,
v/.style = {circle, fill = black, inner sep = 0.8mm},u/.style = {circle, fill = white, inner sep = 0.1mm}]
  \node[v] (1) at (0, 0) {};
  \node[v] (2) at (2, 0) {};
  \node[v] (3) at (2, 2) {};
  \node[v] (4) at (0, 2) {};
  \draw[-] (1) to (2);
  \draw[->] (3) to (2); 
  \draw[->] (4) to (3);
  \draw[->] (4) to (1);
  \draw[->] (3) to (1);
  \draw[->] (2) to (4);
\end{tikzpicture}
\caption{The non-bipartite mixed graph provided by Guo--Mohar \cite{GM}
that has the $\frac{\pi}{2}$-symmetric spectrum} \label{0923-1}
\end{center}
\end{figure}
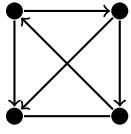

As we have seen sporadically in the above literatures,
we know examples of non-bipartite mixed graphs
having $\theta$-symmetric spectra for $\theta \in \{ \frac{\pi}{2}, \frac{\pi}{3}, \pi \}$.
Then, is there always non-bipartite mixed graphs having the $\theta$-symmetric spectrum for each $\theta$?
We are interested in this question.
Therefore, we define the following terminology for angles.
\begin{defi}[Bipartite Detection Property]
Let us fix  an angle $\theta \in (0, \pi]$ and consider any mixed graph $G$. 
$\theta$ is said to have the {\it bipartite detection property}
when the following equivalence holds: 
$G$ is bipartite if and only if $G$ has the $\theta$-symmetric spectrum.
Here, we recall that a mixed graph $G$ has the {\it $\theta$-symmetric spectrum} if the elements of $\Spec(G; \theta)$ are symmetric with respect to the origin, including their multiplicities.
\end{defi}
Based on this terminology,
we know so far that the three angles $\frac{\pi}{2}, \frac{\pi}{3}$ and $\pi$
do not have the bipartite detection property.
We are interested in which angle has the bipartite detection property.
In conclusion, we found the following:


\begin{thm} \label{main_BDP}
{\it
Let $\theta \in (0, \pi]$.
If an angle $\theta$ satisfies one of the following, then it has the bipartite detection property:
\begin{enumerate}[(i)]
\item $\theta$ is an algebraic number.
\item $\theta$ is described by $\alpha \pi$ for an irrational algebraic number $\alpha$.
\item $\theta$ is described by $\alpha \pi + \beta$ for non-zero algebraic numbers $\alpha, \beta$.
\end{enumerate}
}
\end{thm}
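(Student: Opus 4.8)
The plan is, in each of the three cases, to establish the equivalence that defines the bipartite detection property. One implication is free and uniform in $\theta$: if $G$ is bipartite, with $\G(G)$ having colour classes $V_1,V_2$, let $D=\diag(d_v)$ be the diagonal matrix with $d_v=1$ for $v\in V_1$ and $d_v=-1$ for $v\in V_2$. Every nonzero entry of $H_\theta$ joins the two classes, so $DH_\theta D=-H_\theta$; since $D$ is orthogonal, $H_\theta$ is similar to $-H_\theta$, hence $\Spec(G;\theta)=-\Spec(G;\theta)$. So the work lies in the converse, ``$\theta$-symmetric spectrum $\Rightarrow$ bipartite'', and this is where the hypotheses on $\theta$ enter.

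Next I would set up the algebraic bookkeeping. Writing $\phi(G;\theta,x)=\det(xI-H_\theta)=\sum_{k=0}^{n}c_k(\theta)x^{n-k}$ with $n=|V|$ and $z=e^{i\theta}$, the Leibniz expansion shows that each cyclic factor of a permutation contributes a monomial $z^{s}$ with $s\in\MB{Z}$, so $c_k(\theta)\in\MB{Z}[z,z^{-1}]$; reality of $c_k$ pairs $z^{s}$ with $z^{-s}$, so $c_k(\theta)=\sum_{s\ge0}a_{k,s}\cos(s\theta)$ with finitely many integers $a_{k,s}$, and $z^{S}c_k(\theta)\in\MB{Z}[z]$ once $S$ bounds the exponents. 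Moreover $G$ has the $\theta$-symmetric spectrum if and only if $\phi(G;\theta,-x)=(-1)^n\phi(G;\theta,x)$, that is, if and only if $c_k(\theta)=0$ for every odd $k$.

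The arithmetic core is to show that $z=e^{i\theta}$ is transcendental in each of the three cases. In case (i), $i\theta$ is a nonzero algebraic number, so $e^{i\theta}$ is transcendental by the Lindemann--Weierstrass theorem. In case (ii), $z=(e^{i\pi})^{\alpha}=(-1)^{\alpha}$ with $\alpha$ an irrational algebraic number, so $z$ is transcendental by the Gelfond--Schneider theorem. In case (iii), write $i\theta=i\beta+\alpha\log(-1)$; then $z=e^{i\theta}=e^{i\beta}(-1)^{\alpha}$ has the form $e^{\beta_0}\gamma_1^{\beta_1}$ with $\beta_0=i\beta$, $\beta_1=\alpha$, $\gamma_1=-1$ all nonzero algebraic, and such a number is transcendental by the standard consequence of Baker's theorem on linear forms in logarithms. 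I expect identifying exactly the right form of Baker's theorem for case (iii), together with the (routine) checking of branches, to be the only genuine obstacle; the rest is soft.

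Finally, suppose $G$ has the $\theta$-symmetric spectrum, so $c_k(\theta)=0$ for all odd $k$. Since $z^{S}c_k(\theta)$ lies in $\MB{Z}[z]$ and vanishes at the transcendental number $z=e^{i\theta}$, it is the zero polynomial; hence $c_k(\theta')=0$ for all $\theta'$, and in particular $c_k(0)=0$. But $c_k(0)$ is the corresponding coefficient of the characteristic polynomial of $H_0$, which is exactly the ordinary adjacency matrix of $\G(G)$. Therefore the ordinary spectrum of $\G(G)$ is symmetric about the origin, so $\G(G)$, and hence $G$, is bipartite by the classical equivalence for adjacency matrices of graphs. This shows that $\theta$ has the bipartite detection property in each of cases (i)--(iii).
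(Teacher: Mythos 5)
Your proposal is correct, and its arithmetic core is the same as the paper's: both reduce $\theta$-symmetry of the spectrum to the vanishing of the odd-indexed coefficients of the characteristic polynomial, note that these coefficients are fixed integer (Laurent) polynomial expressions in $e^{i\theta}$ (equivalently, integer polynomials in $\cos\theta$), and then apply Lindemann--Weierstrass, Gelfond--Schneider and Baker in cases (i), (ii), (iii) exactly as you do; the paper phrases the conclusion as ``$\cos\theta$ is not algebraic,'' which is equivalent to your ``$e^{i\theta}$ is transcendental'' via $z^{2}-2\cos\theta\,z+1=0$, and its small lemma on $\sin\theta$ plays the role of that reduction. Where you genuinely diverge is the closing step. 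The paper proves a non-vanishing lemma: for a non-bipartite $G$, the coefficient indexed by the smallest odd cycle length equals $-2\sum_{j}\re(C_{j})$ by the elementary-subgraph (Sachs-type) coefficient formula of Mehatari et al., and this is a nonzero integer polynomial in $\cos\theta$, so its vanishing at a transcendental $\cos\theta$ is an immediate contradiction. You instead let transcendence of $e^{i\theta}$ force every odd coefficient, viewed as a fixed Laurent polynomial in $z$ determined by $G$ alone, to vanish identically; specializing at $\theta'=0$, where $H_{0}$ is the ordinary adjacency matrix of $\Gamma(G)$, you conclude bipartiteness from the classical equivalence for undirected graphs (e.g.\ via $\tr A^{2k+1}=0$). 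Your route is a bit leaner for this theorem, since it needs only the Leibniz expansion and the classical fact rather than the coefficient formula; the paper's choice has the advantage that the coefficient machinery and the shortest-odd-cycle computation are exactly what it reuses in Sections~5 and~6 to build and analyze the counterexamples for $\theta\in\MB{Q}\pi$. The only point to make explicit in a polished write-up is that the identity $e^{iS\theta}c_{k}(\theta)=P_{k}(e^{i\theta})$ holds for all $\theta$ with one fixed $P_{k}\in\MB{Z}[w]$ coming from the expansion, so that $P_{k}\equiv 0$ really does give $c_{k}(\theta')=0$ for every $\theta'$, including $\theta'=0$; as you set it up, this is fine.
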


\begin{thm} \label{main_notBDP}
{\it
Let $\theta \in (0, \pi]$.
Any angle $\theta \in \MB{Q}\pi$ does not have the bipartite detection property.
}
\end{thm}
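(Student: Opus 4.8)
The plan is to establish Theorem~\ref{main_notBDP} constructively: for every $\theta\in\MB{Q}\pi$ I will build a non-bipartite mixed graph $G$ whose $H_\theta$-spectrum is symmetric about $0$ (this is the family promised in the abstract). Write $\theta=\tfrac pq\pi$ with $\gcd(p,q)=1$ and $0<p/q\le1$. The structural fact to exploit is that $e^{i\theta}$ is a root of unity, so the \emph{flux} of a closed walk --- the product of the $H_\theta$-entries along it --- is a power of $e^{i\theta}$; replacing an edge by a digon changes no flux, so the fluxes that occur are exactly the integer multiples of $\theta$ modulo $2\pi$. I will also use the standard gauge/magnetic computation (cf.\ Section~\ref{MAM}): an $n$-cycle carrying total flux $\Phi$ has the same spectrum as the $n$-cycle in which all edge phases vanish except one, equal to $\Phi$, namely $\{\,2\cos\tfrac{\Phi+2\pi k}{n}:k=0,\dots,n-1\,\}$; in particular $-\Spec$ of such a cycle equals $\Spec$ of the cycle of flux $\Phi+n\pi$.

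When $p$ is odd one has $\gcd(p,2q)=1$, so there is a least $m\ge1$ with $m\theta\equiv\pi\pmod{2\pi}$. Fix an odd $n\ge\max\{3,m\}$, let $C_n$ be the ordinary $n$-cycle (all edges digons), and let $D_n$ be the $n$-cycle with $m$ consecutive arcs oriented the same way and the remaining $n-m$ edges digons, so $D_n$ has flux $m\theta\equiv\pi$. Since $n$ is odd, $\pi+n\pi\equiv0\pmod{2\pi}$, so by the remark above $-\Spec(D_n;\theta)=\Spec(C_n;\theta)$; hence $G=D_n\sqcup C_n$ has $\Spec(G;\theta)=\Spec(D_n;\theta)\cup\bigl(-\Spec(D_n;\theta)\bigr)$, which is symmetric, while $\G(G)=C_n\sqcup C_n$ is non-bipartite. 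This settles $\theta=\tfrac\pi2,\pi$ and, in general, every $\theta$ for which $e^{i\theta}$ has even order.

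The remaining, genuinely harder, case is $p$ even, so $q\ge3$ is odd, $e^{i\theta}$ has odd order $q$, and no multiple of $\theta$ is $\equiv\pi$; here the cycle construction cannot work and the symmetry must come from cancellation. I would take a ``book'': two vertices $u,v$, one edge $\{u,v\}$ (a digon), and $q$ internally disjoint $u$--$v$ paths, all of the even length $q-1$, so that the $q$ ``pages'' are $q$-cycles through $\{u,v\}$ --- odd cycles, whence $\G(G)$ is non-bipartite. Taking the Schur complement onto $\{u,v\}$ (eliminating the internal path vertices) yields
\[
\det(xI-H_\theta(G))=p(x)^{\,q-2}\Bigl[(x\,p-q\,a)^2-p^2-\bigl|\textstyle\sum_l e^{i\psi_l}\bigr|^2\ \mp\ 2p\,\textstyle\sum_{l=1}^q\cos\Phi_l\Bigr],
\]
where $p,a$ are characteristic polynomials of two paths of consecutive lengths (hence of opposite parity, so $p^{\,q-2}$ has pure parity and $(xp-qa)^2$, $p^2$ are even), $\psi_l$ is the phase along the $l$-th path, and $\Phi_l$ is the flux of the $l$-th page; the \emph{only} term violating pure parity is $\mp2p\sum_l\cos\Phi_l$. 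Since $\sum_{r=0}^{q-1}\cos\tfrac{2\pi r}{q}=0$ and each $\tfrac{2\pi r}{q}$ is a multiple of $\theta$, the paths (being long enough) can be oriented so that $\{\Phi_l\}=\{\tfrac{2\pi r}{q}:0\le r\le q-1\}$; then $\sum_l\cos\Phi_l=0$, the characteristic polynomial has pure parity, and $\Spec(G;\theta)$ is symmetric.

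The main obstacle is exactly this last case: carrying out the Schur-complement determinant computation for the book carefully enough that $\sum_l\cos\Phi_l$ is visibly the sole obstruction to pure parity, together with the bookkeeping that the required fluxes $\tfrac{2\pi r}{q}$ are realizable as page fluxes using even-length paths (so the pages remain odd cycles). The $p$-odd case, by contrast, needs only the elementary verification that the two cycle spectra are exact negatives of each other, which I would isolate as a short lemma computing $\Spec$ of a cycle as a function of its flux.
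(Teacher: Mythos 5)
Your construction is correct, but it follows a genuinely different route from the paper. The paper uses a single uniform family: the oriented book graph $G_m$ whose $m$ pages are odd cycles of length $2m-1$ through a common arc, carrying fluxes $(2j-1)\theta$; symmetry of the spectrum is proved through the coefficient machinery (the elementary-subgraph expansion of Theorem~\ref{0923-4}, Lemma~\ref{0926-3}, and the structural Lemma~\ref{0930-1} reducing everything to the single identity $\sum_{j=1}^m\cos((2j-1)\theta)=0$, a geometric sum), with a separate four-vertex example handling $\theta=\pi$. You instead split on the parity of the numerator: for odd $p$ you take a disjoint union of two odd $n$-cycles with fluxes $\pi$ and $0$ and argue directly with cycle spectra and the flux-shift-by-$n\pi$ observation (equivalently Theorem~\ref{UE}), which is more elementary than any coefficient analysis; for even $p$ you build a $q$-page book with page fluxes $\{2\pi r/q\}$ and prove pure parity of the characteristic polynomial via a Schur complement onto $\{x,y\}$. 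I checked your displayed determinant identity (with $p=\phi_{q-2}$, $a=\phi_{q-3}$ the path characteristic polynomials and the minus sign), the parity bookkeeping (here $q$ odd is exactly what makes $x\phi_{q-2}$ and $\phi_{q-3}$ both even), and the realizability of the fluxes $k\theta$, $0\le k\le q-1$, on paths with $q-1$ edges; the $\mp$ hedge is harmless since that term is the unique odd-parity term either way, so it must vanish, and $\sum_{r=0}^{q-1}\cos(2\pi r/q)=0$ does the job. What each approach buys: your cycle argument in the odd-numerator case is shorter and avoids Theorem~\ref{0923-4} entirely, and your Schur-complement parity argument is a clean substitute for Lemma~\ref{0930-1}; on the other hand, your graphs use digons in both cases, whereas the paper's $G_m$ is an \emph{oriented} graph (a strictly stronger exhibit, and unavoidable to relax only at $\theta=\pi$, where the paper notes oriented graphs cannot work), and the paper's uniform book construction is what feeds the minimality analysis of odd circumference and number of sheets in Section~\ref{1222-1}.
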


Since the set of real algebraic numbers is dense in $\MB{R}$, 
it turns out, 
as a simple corollary of our theorems, 
that a bipartite graph only has the ``persistent'' $\theta$-symmetric 
spectra for the perturbation on $\theta$ as follows:
\begin{cor}\label{Cor1_2}
{\it
Fix any $\theta_{0} \in (0, \pi]$. 
A graph $G$ is bipartite if and only if 
there exists a positive number $\epsilon>0$ such that 
$G$ has the $\theta$-symmetric spectrum for 
any $\theta \in (\theta_{0}-\epsilon, \theta_{0}+\epsilon)$. 
}
\end{cor}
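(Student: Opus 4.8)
The plan is to deduce Corollary~\ref{Cor1_2} directly from Theorem~\ref{main_BDP}(i) together with the density of the real algebraic numbers in $\MB{R}$, treating the two implications separately. Theorem~\ref{main_notBDP} is not actually needed for the argument itself, although it is what explains why the $\epsilon$-neighbourhood in the statement cannot be weakened to a single prescribed angle.

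For the forward implication I would first record the standard fact that a \emph{bipartite} mixed graph has the $\theta$-symmetric spectrum for \emph{every} $\theta \in (0,\pi]$. Concretely, if $V = X \sqcup Y$ is a bipartition of the underlying graph $\G(G)$, let $D$ be the real diagonal matrix with $D_{x,x}=1$ for $x \in X$ and $D_{x,x}=-1$ for $x \in Y$. Since every arc of $G$ joins $X$ to $Y$, inspecting the four cases of \eqref{hermit} gives $D H_\theta(G) D = -H_\theta(G)$; as $D=D^{-1}$, the matrices $H_\theta(G)$ and $-H_\theta(G)$ are similar, so $\Spec(G;\theta)$ is symmetric about $0$ for all $\theta$. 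Hence any $\epsilon>0$ (say $\epsilon=1$) witnesses the right-hand side of the corollary.

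For the converse, suppose some $\epsilon>0$ is such that $G$ has the $\theta$-symmetric spectrum for all $\theta$ in $I:=(\theta_0-\epsilon,\theta_0+\epsilon)$. Because $\theta_0\in(0,\pi]$ and $\epsilon>0$, the intersection $I\cap(0,\pi)$ is a nonempty open interval (when $\theta_0=\pi$ one uses $(\pi-\epsilon,\pi)$, which is still nonempty), and by density of the real algebraic numbers it contains an algebraic number $\theta^{*}$. Then $\theta^{*}$ has the bipartite detection property by Theorem~\ref{main_BDP}(i), and by hypothesis $G$ has the $\theta^{*}$-symmetric spectrum; therefore $G$ is bipartite.

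The proof is short, so I do not expect a serious obstacle; the only points that need a little care are (a) staying inside the domain $(0,\pi]$ on which $H_\theta$ and the bipartite detection property are defined — this is why one shrinks $I$ to $I\cap(0,\pi)$ before invoking density, and in particular it handles the endpoint case $\theta_0=\pi$ — and (b) the elementary verification $D H_\theta(G) D=-H_\theta(G)$ in the bipartite case, which is immediate from \eqref{hermit}. Writing (a) cleanly is the most finicky part, but it remains entirely routine.
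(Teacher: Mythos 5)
Your proof is correct and follows essentially the same route the paper intends: the forward direction is Lemma~\ref{0926-1} (your diagonal-signing argument $DH_\theta D=-H_\theta$ is the standard proof of it), and the converse picks an algebraic angle in the interval by density and invokes Theorem~\ref{main_BDP}(i). The care you take with the endpoint $\theta_0=\pi$ and staying inside $(0,\pi]$ is appropriate but routine, exactly as the paper's brief remark suggests.
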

Naturally, the above shows 
a kind of the identity theorem, that is,  
$G$ has the $\theta$-symmetric spectrum for any $\theta\in \MB{R}$  if there exists a positive number $\epsilon>0$ such that 
$G$ has the $\theta$-symmetric spectrum for 
any $\theta \in (\theta_{0}-\epsilon, \theta_{0}+\epsilon)$. 

This paper is organized as follows.
In Section~\ref{MAM},
we describe relationship between Hermitian adjacency matrices introduced in spectral graph theory and magnetic adjacency operators introduced in spectral analysis on graphs.
In Section~\ref{BT},
basic tools for discussing bipartite detection property are organized.
In Section~\ref{SA},
we prove our first main theorem (Theorem~\ref{main_BDP}), that is,
we present three sufficient conditions for an angle $\theta \in (0, \pi]$ to have the bipartite detection property.
In Section~\ref{1117-1},
we prove our second main theorem (Theorem~\ref{main_notBDP}).
Namely, when an angle $\theta \in (0, \pi]$ is in $\MB{Q}\pi$,
a non-bipartite oriented graph which has the $\theta$-symmetric spectrum is constructed depending on the angle.
In Section~\ref{1222-1}, certain minimality of the oriented graphs constructed in the previous section is discussed.

\section{Magnetic Adjacency Matrix}\label{MAM}

In this section, we give a short review for the magnetic adjacency operator
 interpreted from the notation and results for the magnetic/twisted Laplacian.
For details, one should refer to \cite{HS,KSS,LiLo,Su}. 
Here, we follow the notation in \cite{HS}. 

Let $G= (V(G),E(G))$ be a connected, locally finite and unoriented  graph, 
where $V(G)$ is the set of vertices and $E(G)$ is the set of its unoriented 
edges. A graph $G$ may be infinite, and have self-loops and multiple edges. 
Considering each edge in $E(G)$ to have two orientations, 
we introduce the set of all oriented edges, which is denoted by $A(G)$. 
For an edge $e\in A(G)$, the origin and the terminus of $e$ are denoted by 
$o(e)$ and $t(e)$, respectively. Moreover the inverse edge of $e$ is 
denoted by $\bar{e}$. 

Putting 
$$
C^{1}_{\mathbb{R}}(G) = \{\omega\, :\, A(G)\to\mathbb{R}\ ;\  
\omega(\bar{e})=-\omega(e) \}, 
$$
we call an element of $C^{1}_{\mathbb{R}}(G)$ a $1$-form on $G$. 
For a fixed $1$-form $\omega$, we define a self-adjoint operator 
$H_{\omega,G}\, :\, \ell^2(V(G))\to\ell^{2}(V(G))$ by
\begin{equation}
H_{\omega,G} f(x)= \sum_{e\in A_{x}(G)}\exp(i\omega(e))f(
(t(e)),
\end{equation}
where $A_{x}(G)=\{e\in A(G)\ ;\ o(e)=x\}$ and 
$\ell^{2}(V(G)) $ is the Hilbert space with the
standard inner product $\langle\cdot , \cdot\rangle$, that is, 
$$
\ell^{2}(V(G)) = \{f\, :\, V(G)\to\mathbb{C}\ ;\ \langle f,f\rangle<\infty\}.
$$
In this setting, the $\theta$-Hermitian adjacency matrix $H_{\theta}$ 
on a mixed graph $G$ introduced in Section~\ref{Intro} can 
be interpreted as the magnetic adjacency operator on a finite graph $G$ 
for a special $1$-form $\omega$. 
For a mixed graph $G=(V,\MC{A})$, let us first choose $A_{0}\subset A(G)$ following 
the set of arcs of the mixed graph $G=(V,\MC{A})$. 
Then we set the $1$-form $\omega$ on 
$A(G)$ as (i)~$\omega(e)=\theta$ for $e\in A_{0}$ (therefore, 
$\omega(\bar{e})=-\theta$)  and (ii)~$\omega(e)=0$ for 
$e\in A(G)\setminus (A_{0}\cup\overline{A_{0}})$, 
where $\overline{A_{0}}=\{e\in A(G)\ ;\ \bar{e}\in A(G)\}$. 
In this sense,  the $\theta$-Hermitian adjacency matrix $H_{\theta}$ 
is essentially the same as the magnetic/twisted Laplacian discussed in 
\cite{HS,KSS,LiLo,Su}.

Now let us give some important properties for a magnetic adjacency matrix. 
We can immediately transplant in terms of a magnetic adjacency matrix
from the known results (cf. \cite{HS,KSS,LiLo,Su})
in the context of the magnetic/twisted Laplacian.
Here we write $Spec(H_{\omega, G})$ for the set of eigenvalues of $H_{\omega, G}$ 
including their multiplicities.

\begin{thm}[cf.\cite{HS}]
For a given $1$-form $\omega\in C^{1}_{\mathbb{R}}(G)$, 
put $\omega'(e)=-\omega(e)$ for every $e\in A(G)$. 
Then $Spec(H_{\omega, G}) = Spec(H_{\omega', G})$. 
\end{thm}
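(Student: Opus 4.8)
The plan is to recognize $H_{\omega',G}$ as the entrywise complex conjugate of $H_{\omega,G}$ and then exploit self-adjointness. First I would compute, for $f\in\ell^2(V(G))$ and $x\in V(G)$,
\[
H_{\omega',G}f(x)=\sum_{e\in A_x(G)}\exp(i\omega'(e))f(t(e))=\sum_{e\in A_x(G)}\exp(-i\omega(e))f(t(e)),
\]
from which the $(x,y)$-entry of $H_{\omega',G}$ is seen to be the complex conjugate of the $(x,y)$-entry of $H_{\omega,G}$; that is, $H_{\omega',G}=\overline{H_{\omega,G}}$, where the bar denotes entrywise conjugation. Here the sign convention $\omega(\bar e)=-\omega(e)$ built into the definition of a $1$-form is exactly what makes this identity come out cleanly.

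Next, since $H_{\omega,G}$ is self-adjoint, entrywise conjugation agrees with transposition, $\overline{H_{\omega,G}}=H_{\omega,G}^{\top}$. For a finite graph this immediately gives the claim, because a matrix and its transpose have the same characteristic polynomial and hence the same eigenvalues with multiplicities. For a possibly infinite graph I would instead introduce the antiunitary involution $J\colon\ell^2(V(G))\to\ell^2(V(G))$, $(Jf)(x)=\overline{f(x)}$, observe that $J H_{\omega,G} J=\overline{H_{\omega,G}}=H_{\omega',G}$, and conclude that $\Spec(H_{\omega',G})=\overline{\Spec(H_{\omega,G})}$ as sets (indeed with full spectral data preserved); self-adjointness forces the spectrum to be real, so the bar is harmless and the two spectra coincide.

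I do not expect any genuine obstacle here: the result is essentially a bookkeeping consequence of the definition, which is why it is merely recorded as ``cf.'' from the magnetic/twisted Laplacian literature. The only points requiring mild care are verifying that the entrywise-conjugation identity $H_{\omega',G}=\overline{H_{\omega,G}}$ is precisely what replacing $\omega$ by $-\omega$ produces, and checking that it is the multiplicities (in the finite case) — or the spectral measure (in the infinite case) — that are preserved under transposition or antiunitary conjugation, not just the spectrum viewed as a set.
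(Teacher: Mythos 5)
Your proposal is correct, and it is exactly the standard complex-conjugation argument ($H_{\omega',G}=\overline{H_{\omega,G}}=JH_{\omega,G}J$, plus self-adjointness to make the conjugation of the real spectrum harmless) that the paper itself omits by citing the magnetic Schr\"odinger operator literature [HS]; there is no divergence in approach. The only caveat worth noting is that for an infinite locally finite graph one should be slightly careful that $H_{\omega,G}$ is genuinely self-adjoint (unbounded degree requires a domain discussion), but in the finite setting actually used in this paper your transpose/characteristic-polynomial argument settles the claim completely, multiplicities included.
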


\begin{thm}[cf.\cite{HS}]
For a given $1$-form $\omega\in C^{1}_{\mathbb{R}}(G)$, 
set a $1$-form $\omega'$ as follows: 
$\omega'(e)=\pi -\omega(e)$ 
for every $e\in A(G)$. 
Remark that $\omega'(e)=-\omega'(\bar{e})$ in modulo $2\pi$ since 
$\omega(e)=-\omega(\bar{e})$. 
Then $H_{\omega,G}$ and $-H_{\omega',G}$ are unitarily equivalent, that is, 
$Spec(H_{\omega,G})$ and $Spec(H_{\omega',G})$ are mutually symmetric with 
respect to $0$. 
\end{thm}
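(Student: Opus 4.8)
The plan is to make the claimed equivalence completely explicit: first I would rewrite it at the level of matrix entries, and then recognize the resulting relation between $H_{\omega,G}$ and $H_{\omega',G}$ as nothing but complex conjugation.

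First I would fix the standard basis $\{\delta_x\}_{x\in V(G)}$ of $\ell^2(V(G))$ and read off $(H_{\omega,G})_{x,y}=\sum_{e\in A_x(G),\,t(e)=y}\exp(i\omega(e))$. The remark in the statement (that $\omega'(e)\equiv-\omega'(\bar e)$ modulo $2\pi$) guarantees that $\exp(i\omega'(\cdot))$ is a consistent edge weighting, so $H_{\omega',G}$ is well defined and self-adjoint. Since $\omega'(e)=\pi-\omega(e)$ forces $\exp(i\omega'(e))=e^{i\pi}e^{-i\omega(e)}=-\exp(-i\omega(e))$, summing over the same (finite, by local finiteness) set of oriented edges from $x$ to $y$ gives $(H_{\omega',G})_{x,y}=-\overline{(H_{\omega,G})_{x,y}}$ for every pair $x,y$; that is, $-H_{\omega',G}=\overline{H_{\omega,G}}=H_{-\omega,G}$, the entrywise complex conjugate of $H_{\omega,G}$. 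Because $H_{\omega,G}$ is self-adjoint, $\overline{H_{\omega,G}}=H_{\omega,G}^{\mathsf{T}}$ is self-adjoint as well, so the theorem reduces to the assertion that $H_{\omega,G}$ is equivalent to its own complex conjugate.

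Next I would produce the equivalence, noting at the outset that the genuinely delicate point — the main obstacle to a completely uniform argument — is that the natural operator realizing it is antilinear rather than a unitary. The quickest route to the spectral claim is to cite the previous theorem: since $-H_{\omega',G}=H_{-\omega,G}$, the identity $\Spec(H_{\omega,G})=\Spec(H_{-\omega,G})$ yields $\Spec(H_{\omega,G})=\Spec(-H_{\omega',G})=-\Spec(H_{\omega',G})$ at once. For the stronger ``unitarily equivalent'' assertion: in the finite case I would observe that $H_{\omega,G}^{\mathsf{T}}$ has the same characteristic polynomial, hence the same spectrum with multiplicities, as $H_{\omega,G}$, and two Hermitian matrices with the same spectrum are unitarily equivalent by the spectral theorem, so $H_{\omega,G}$ and $-H_{\omega',G}=\overline{H_{\omega,G}}$ are unitarily equivalent verbatim; in the general case I would use the complex-conjugation map $J\colon\ell^2(V(G))\to\ell^2(V(G))$, $(Jf)(x)=\overline{f(x)}$, an antiunitary involution satisfying $JH_{\omega,G}J=\overline{H_{\omega,G}}=-H_{\omega',G}$, and recall that conjugation by an antiunitary preserves self-adjointness and the full spectrum with multiplicities. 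Thus the word ``unitarily'' is literally correct in finite dimensions, while for infinite $G$ the honest conclusion is the equality of spectra just described (obtained either from the antiunitary $J$ or from the previous theorem applied to $H_{-\omega,G}$), which is all the statement is used for in the sequel.
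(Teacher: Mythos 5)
Your proposal is correct, and it supplies an argument that the paper itself omits: in Section~2 this theorem is stated as a review item with the citation ``cf.~\cite{HS}'' and no proof is given. Your entrywise computation is exactly the expected one: from $\omega'(e)=\pi-\omega(e)$ one gets $e^{i\omega'(e)}=-e^{-i\omega(e)}$, hence $-H_{\omega',G}=\overline{H_{\omega,G}}=H_{-\omega,G}$, and the spectral symmetry then follows either from the preceding theorem ($\Spec(H_{\omega,G})=\Spec(H_{-\omega,G})$) or directly from the reality of the spectrum of a self-adjoint operator under complex conjugation. You also handle the one genuinely delicate point appropriately: the natural intertwiner is the antiunitary conjugation $J$, so ``unitarily equivalent'' is immediate in the finite-dimensional setting (equal characteristic polynomials plus the spectral theorem), while in the locally finite infinite case the antiunitary conjugation still preserves the spectrum (indeed the whole spectral multiplicity data, since the spectral measures are supported on $\mathbb{R}$), which is all that the paper uses. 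No gaps.
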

The following, which is also found in \cite{LL},
can be immediately derived from the fact above: 
if a $1$-form $\omega$ satisfies $|\omega(e)|=\pi/2$ for 
every $e\in A(G)$,
then $Spec(H_{\omega',G})$ is symmetric with respect to $0$.  

The next lemma is a basic result for a general magnetic adjacency matrix.
\begin{lem}[cf.\cite{HS}]\label{basiclemma}
Suppose that a graph $G$ is bipartite. 
Then $H_{\omega,G}$ and $-H_{\omega,G}$ are unitarily equivalent
for any $1$-form $\omega\in C^{1}_{\mathbb{R}}(G)$. In other words,
$Spec(H_{\omega,G})$ is symmetric with respect to $0$. 
\end{lem}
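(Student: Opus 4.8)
The plan is to produce an \emph{explicit} unitary that implements the equivalence, rather than to argue abstractly. Since $G$ is bipartite, fix a bipartition $V(G) = V_{0} \sqcup V_{1}$ so that every edge of $G$ joins $V_{0}$ to $V_{1}$; in particular $G$ has no self-loop, and for every $x \in V(G)$ and every $e \in A_{x}(G)$ the terminus $t(e)$ lies in the part opposite to the one containing $x$. Define $U \colon \ell^{2}(V(G)) \to \ell^{2}(V(G))$ by $(Uf)(x) = \varepsilon(x) f(x)$, where $\varepsilon(x) = 1$ for $x \in V_{0}$ and $\varepsilon(x) = -1$ for $x \in V_{1}$. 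Then $U$ is a bounded self-adjoint involution, hence a unitary with $U^{-1} = U = U^{*}$.

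Next I would verify the intertwining identity $U H_{\omega,G} U = - H_{\omega,G}$. For $f \in \ell^{2}(V(G))$ and $x \in V(G)$,
\[
(U H_{\omega,G} U f)(x) = \varepsilon(x) \sum_{e \in A_{x}(G)} \exp(i\omega(e))\, \varepsilon(t(e))\, f(t(e)) = \sum_{e \in A_{x}(G)} \varepsilon(x)\,\varepsilon(t(e))\, \exp(i\omega(e))\, f(t(e)),
\]
and since $\varepsilon(x)\,\varepsilon(t(e)) = -1$ for every $e \in A_{x}(G)$ by bipartiteness, the right-hand side equals $-(H_{\omega,G} f)(x)$. When $G$ is finite this is all that is needed. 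When $G$ is infinite and $H_{\omega,G}$ is unbounded, the same computation shows that $U$ maps the domain of $H_{\omega,G}$ onto itself and that $U H_{\omega,G} U = -H_{\omega,G}$ holds on that domain, so $H_{\omega,G}$ and $-H_{\omega,G}$ are unitarily equivalent as self-adjoint operators.

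Finally, unitary equivalence of $H_{\omega,G}$ and $-H_{\omega,G}$ forces $\Spec(H_{\omega,G}) = \Spec(-H_{\omega,G}) = -\Spec(H_{\omega,G})$, and, $U$ being unitary, it restricts to an isomorphism from the $\lambda$-eigenspace of $H_{\omega,G}$ onto its $(-\lambda)$-eigenspace, so the multiplicities of $\lambda$ and $-\lambda$ coincide. Hence $\Spec(H_{\omega,G})$ is symmetric about the origin. I do not expect a genuine obstacle in this argument; the only points deserving care are the observation that bipartiteness rules out self-loops (so that $t(e) \neq x$ for every $e \in A_{x}(G)$) and, in the infinite-dimensional case, the routine domain bookkeeping for an unbounded self-adjoint $H_{\omega,G}$.
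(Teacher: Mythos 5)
Your argument is correct and is exactly the standard one: the paper itself gives no proof of this lemma (it is quoted from the reference on discrete magnetic Schr\"odinger operators, and Lemma~3.1 is said to follow ``by extending the procedure performed for adjacency matrices of undirected bipartite graphs''), and that procedure is precisely your conjugation by the diagonal unitary $U$ with entries $\varepsilon(x)=\pm 1$ determined by the bipartition, which yields $UH_{\omega,G}U=-H_{\omega,G}$. Your added remarks on the absence of self-loops and on domain bookkeeping in the unbounded case are sensible and do not change the substance.
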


If a path $C=(e_{1},e_{2},\dots ,e_{n})$, which is a sequence of oriented edges
such that $t(e_{k})=o(e_{k+1})$, satisfies $t(e_{n})=o(e_{1})$, then
it is said to be a closed path. For a closed path $C$, we define
the {\it magnetic flux\/} of a $1$-form $\omega\in C^{1}_{\mathbb{R}}(G)$
through $C$ by
\begin{equation}
\int_{C}\omega=\sum_{k=1}^{n}\omega(e_{k}).
\end{equation} 

The next statement shows the magnetic fluxes of
$1$-form $\omega\in C^{1}_{\mathbb{R}}(G)$
over $G$ determine the spectrum of $H_{\omega, G}$ (cf. \cite{HS,LiLo}). 
For a finite graph $G$, 
this can be expressed in terms of the characteristic polynomial of 
$H_{\omega, G}$, which is found in Theorem~\ref{0923-4}
in Section~\ref{BT} and also in \cite{MKS}. 

\begin{thm}\label{UE}[cf.\cite{HS,LiLo}]
Let $\omega_{1},\omega_{2}\in C^{1}_{\mathbb{R}}(G)$ be $1$-forms on $G$.
If the magnetic flux of $\omega_{1}$ equals to that of $\omega_{2}$
in modulo $2\pi$ for each closed path of $G$, then
$H_{\omega_{1},G}$ and $H_{\omega_{2},G}$ are unitarily equivalent. that is,
$Spec(H_{\omega_{1},G}) = Spec(H_{\omega_{2},G})$. 
\end{thm}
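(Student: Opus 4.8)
The plan is to exhibit the unitary equivalence explicitly by a gauge transformation, i.e.\ by a diagonal unitary operator. Put $\eta := \omega_{1} - \omega_{2}$, which is again a $1$-form in $C^{1}_{\MB{R}}(G)$, and observe that the hypothesis says precisely that $\int_{C}\eta \in 2\pi\MB{Z}$ for every closed path $C$ of $G$. I would first reduce to the case that $G$ is connected, since $H_{\omega_{1},G}$, $H_{\omega_{2},G}$, and the construction below all respect the decomposition of $G$ into connected components.

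Assuming $G$ connected, the next step is to construct a \emph{potential} for $\eta$ modulo $2\pi$. Fix a spanning tree $T$ of $G$ and a root $x_{0}\in V(G)$, and for each $x\in V(G)$ let $P_{x}$ be the unique path in $T$ from $x_{0}$ to $x$; set $\phi(x):=-\int_{P_{x}}\eta\in\MB{R}$. A telescoping argument along $T$ (valid for either orientation of an edge, by antisymmetry of $\eta$) gives $\phi(t(e))-\phi(o(e))=-\eta(e)$ for every oriented edge $e$ whose underlying edge lies in $T$. The key point is that the same identity holds \emph{modulo $2\pi$} for every oriented edge $e\in A(G)$, including self-loops and parallel edges: if $e\notin T$, then $e$ followed by the $T$-path from $t(e)$ back to $o(e)$ is a closed path whose $\eta$-flux equals $\eta(e)+\phi(t(e))-\phi(o(e))$, and this lies in $2\pi\MB{Z}$ by hypothesis, so $\phi(t(e))-\phi(o(e))\equiv-\eta(e)\pmod{2\pi}$.

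Finally, let $U\colon\ell^{2}(V(G))\to\ell^{2}(V(G))$ be the diagonal operator $(Uf)(x)=e^{i\phi(x)}f(x)$, which is unitary since its diagonal entries are unimodular (so no boundedness issue arises even when $G$ is infinite). A direct computation from the defining formula of $H_{\omega,G}$ gives
\[
(U^{*}H_{\omega_{1},G}Uf)(x)=\sum_{e\in A_{x}(G)}e^{i(\omega_{1}(e)+\phi(t(e))-\phi(o(e)))}f(t(e))
=\sum_{e\in A_{x}(G)}e^{i\omega_{2}(e)}f(t(e))=(H_{\omega_{2},G}f)(x),
\]
where the middle equality uses $\omega_{1}(e)+\phi(t(e))-\phi(o(e))\equiv\omega_{1}(e)-\eta(e)=\omega_{2}(e)\pmod{2\pi}$ together with the $2\pi$-periodicity of $t\mapsto e^{it}$. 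Hence $U^{*}H_{\omega_{1},G}U=H_{\omega_{2},G}$, so the two operators are unitarily equivalent and $Spec(H_{\omega_{1},G})=Spec(H_{\omega_{2},G})$. I expect no serious obstacle here: the argument is the standard fact that a $1$-form with trivial holonomy in $\MB{R}/2\pi\MB{Z}$ is gauge-equivalent to $0$, and the only delicate point is checking that $\phi$ is consistent modulo $2\pi$ across the non-tree edges — precisely the step at which the full flux hypothesis (rather than just its restriction to a generating family of cycles) is used. For finite $G$ one could alternatively deduce the equality of spectra from the dependence of the characteristic polynomial of $H_{\omega,G}$ on the magnetic fluxes only.
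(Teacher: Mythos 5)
Your argument is correct, but note that the paper does not actually prove this statement: it is quoted as a known result with the citation ``cf.~\cite{HS,LiLo}'', and the only internal remark is that for finite $G$ one can also read it off from the flux-dependence of the characteristic polynomial (Theorem~\ref{0923-4}, \cite{MKS}) --- the alternative you mention in your last sentence. What you have written is the standard gauge-transformation proof underlying those references, and it holds up under checking: the spanning-tree potential $\phi$ satisfies $\phi(t(e))-\phi(o(e))=-\eta(e)$ exactly on tree edges, the full flux hypothesis is used exactly where you say it is (to get the congruence modulo $2\pi$ on non-tree edges, including parallel edges), and conjugation by the diagonal unitary $U$ then turns $H_{\omega_{1},G}$ into $H_{\omega_{2},G}$ termwise. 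Two small points worth tightening: for a self-loop $e$ the ``$T$-path from $t(e)$ back to $o(e)$'' is empty, so you should instead invoke the length-one closed path $(e)$ itself, which gives $\eta(e)\in 2\pi\MB{Z}$ directly and makes the congruence trivial; and for infinite locally finite $G$ of unbounded degree $H_{\omega,G}$ need not be bounded on $\ell^{2}(V(G))$, so the identity $U^{*}H_{\omega_{1},G}U=H_{\omega_{2},G}$ should be read on finitely supported functions (or a common natural domain), which is harmless since $U$ is diagonal and unimodular. With these cosmetic fixes your proof supplies a complete self-contained argument for a statement the paper leaves to the literature, and in the finite setting it is arguably more informative than the characteristic-polynomial route, since it produces the explicit intertwining unitary used elsewhere (e.g.\ in the switching remark around Figure~\ref{1126-1}).
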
 

One of the applications of the magnetic/twisted operator is that 
the long time behavior of random walks and quantum walks
on the crystal lattice 
can be effectively characterized in \cite{KSS} and \cite{HKSS}, respectively.

Now we close the review for previous research on the magnetic/twisted operator.
Hereinafter, the notation in this paper returns to that introduced in Section~\ref{Intro}. 

\section{Basic tools}\label{BT}

Thereafter, we return to the discussion in spectral graph theory on finite graphs, described by matrices.
This section discusses basic tools for studying symmetric spectra.
First, the following is a special case for Lemma~\ref{basiclemma},
which can be also proved by extending the procedure performed for adjacency matrices of undirected bipartite graphs:
\begin{lem} \label{0926-1}
{\it
If a mixed graph $G$ is bipartite, then it has the $\theta$-symmetric spectrum for any $\theta \in (0, \pi]$.
}\qed
\end{lem}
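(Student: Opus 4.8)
The plan is to mimic the classical argument for undirected bipartite graphs, namely the sign-flip conjugation on one side of the bipartition. Concretely, suppose $G = (V, \MC{A})$ is bipartite, so $\G(G)$ admits a partition $V = V_1 \sqcup V_2$ with every edge of $E$ joining $V_1$ to $V_2$. Define the diagonal unitary $D \in \MB{C}^{V \times V}$ by $D_{x,x} = 1$ for $x \in V_1$ and $D_{x,x} = -1$ for $x \in V_2$ (equivalently $D = \diag((-1)^{\sigma(x)})_{x \in V}$ for the bipartition function $\sigma$). Then I claim $D H_\theta D^{-1} = -H_\theta$. This is the content of the lemma up to the standard observation that unitarily conjugate matrices share the same multiset of eigenvalues, and that $\Spec(-H_\theta) = -\Spec(G;\theta)$, which is exactly the $\theta$-symmetric spectrum condition.

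The key steps, in order, are: first fix the bipartition $V = V_1 \sqcup V_2$ and define $D$ as above, noting $D = D^{-1} = D^*$. Second, check the entrywise identity $(D H_\theta D)_{x,y} = D_{x,x} (H_\theta)_{x,y} D_{y,y}$. If $x,y$ lie in the same part, then $\{x,y\}$ is not an edge of $\G(G)$, so $(H_\theta)_{x,y} = 0$ and both sides vanish. If $x,y$ lie in different parts, then $D_{x,x} D_{y,y} = -1$, so $(D H_\theta D)_{x,y} = -(H_\theta)_{x,y}$. In either case $(D H_\theta D)_{x,y} = -(H_\theta)_{x,y}$, i.e.\ $D H_\theta D = -H_\theta$. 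Third, conclude: since $D$ is unitary, $H_\theta$ and $-H_\theta$ have the same spectrum with multiplicities; hence $\lambda \in \Spec(G;\theta)$ with multiplicity $m$ iff $-\lambda \in \Spec(G;\theta)$ with multiplicity $m$, which is precisely the $\theta$-symmetric spectrum property. This holds for every $\theta \in (0,\pi]$ since $D$ does not depend on $\theta$.

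There is essentially no obstacle here; this is a direct transplant of the undirected-graph argument, and it is in fact a special case of Lemma~\ref{basiclemma} (taking the $1$-form $\omega$ with $\omega(e) = \theta$ on the chosen arcs and $0$ elsewhere, which is exactly the $1$-form realizing $H_\theta$). The only mild point worth stating carefully is that the sign-flip conjugation works precisely because $H_\theta$ has zero entries within each part of the bipartition — so the argument uses bipartiteness of $\G(G)$ and nothing about the orientation data. One could also phrase this via the characteristic polynomial $\det(\lambda I - H_\theta) = \det(\lambda I - D H_\theta D) = \det(\lambda I + H_\theta) = (-1)^{|V|}\det(-\lambda I - H_\theta)$, so that the characteristic polynomial is even or odd according to the parity of $|V|$; either formulation makes the symmetry of the spectrum immediate.
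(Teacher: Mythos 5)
Your proof is correct and matches the route the paper intends: the paper gives no written argument but notes the lemma is a special case of Lemma~\ref{basiclemma} and follows by extending the standard sign-flip procedure for undirected bipartite graphs, which is exactly the diagonal conjugation $D H_{\theta} D = -H_{\theta}$ you carry out.
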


%

Fix an angle $\theta$.
Let $G$ be a mixed graph.
We say that a subgraph $C$ of $G$ is a {\it cycle} when $\G(C)$ is a cycle in $\G(G)$.
Let $C$ be a cycle in $\G(G)$.
Display as $C = (x_1, x_2, \dots, x_t, x_1)$.
Then the real part of $(H_{\theta})_{x_1, x_2} (H_{\theta})_{x_2, x_3} \cdots (H_{\theta})_{x_t, x_1}$
does not depend on the two directions of the cycle,
i.e.,
$\re ((H_{\theta})_{x_1, x_2} (H_{\theta})_{x_2, x_3} \cdots (H_{\theta})_{x_t, x_1}) = \re ((H_{\theta})_{x_1, x_t} (H_{\theta})_{x_t, x_{t-1}} \cdots (H_{\theta})_{x_2, x_1})$.
We denote this value by $\re(C)$.
An {\it elementary subgraph} $H$ of a mixed graph $G$ is a subgraph of
$G$ such that its connected components consist only of $K_2$ and cycles.
We denote the set of all elementary subgraphs on $j$ vertices in a mixed graph $G$ by $\MC{H}_j(G)$.
Elementary subgraphs dominate the coefficients of the characteristic polynomials of $H_{\theta}$.
The following is claimed by Mehatari et al in the context of $\MB{T}$-gain graphs \cite{MKS}.
Symbols are adjusted to our setting.

\begin{thm}[Corollary~3.1 in \cite{MKS}] \label{0923-4}
{\it
Let $G$ be a mixed graph with $n$ vertices,
and let $\theta \in (0, \pi]$.
Display as $\det(xI_n - H_{\theta}(G)) = \sum_{j = 0}^n a_j x^{n-j}$.
Then we have
\begin{equation} \label{0921-1}
a_j = \sum_{H \in \MC{H}_j(G)} (-1)^{p(H)}2^{|\MC{C}(H)|} \prod_{C \in \MC{C}(H)} \re(C),
\end{equation}
where $p(H)$ is the number of components in $\G(H)$ and $\MC{C}(H)$ is the set of cycles in $H$.
}
\end{thm}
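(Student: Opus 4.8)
The plan is to run the standard Sachs-type (Leibniz-expansion) argument, adapted to the Hermitian entries of $H_{\theta}$. Write $A = H_{\theta}(G)$. Expanding the characteristic polynomial through the elementary symmetric functions of the eigenvalues gives $\det(xI_n - A) = \sum_{j=0}^n (-1)^j E_j\, x^{n-j}$ with $E_j = \sum_{|S| = j} \det(A[S])$, the sum over $j$-subsets $S \subseteq V$ and $A[S]$ the corresponding principal submatrix; hence $a_j = (-1)^j \sum_{|S|=j} \det(A[S])$, and everything reduces to evaluating a single principal minor $\det(A[S])$.

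For a fixed $S$ with $|S| = j$, I would apply the Leibniz formula $\det(A[S]) = \sum_{\sigma \in \Sym(S)} \sgn(\sigma) \prod_{v \in S} A_{v, \sigma(v)}$. Since the diagonal of $A$ is zero, only fixed-point-free $\sigma$ survive, and a surviving $\sigma$ must have $\{v, \sigma(v)\}$ an edge of $\G(G)$ for every $v \in S$. Decomposing $\sigma$ into cycles, a cycle of length $2$ is a transposition $(v\ w)$ with $\{v,w\}$ an edge, i.e.\ a $K_2$-component, while a cycle of length $\geq 3$ has support tracing out a cycle of $\G(G)$. Thus the nonzero terms biject with pairs consisting of an elementary subgraph $H$ with $V(H) = S$ together with a choice, for each length-$\geq 3$ cycle of $H$, of one of its two traversal directions; $K_2$-components and transpositions carry no such choice. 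So $\det(A[S])$ becomes a sum over elementary subgraphs $H$ on $S$, each $H$ accounting for $2^{|\MC{C}(H)|}$ permutations, where $\MC{C}(H)$ is the set of cycles (of length $\geq 3$) of $H$.

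Next I would evaluate the contribution of a single such $\sigma$ componentwise. A $K_2$-component on $\{v,w\}$ contributes $A_{v,w} A_{w,v} = |A_{v,w}|^2 = 1$, since every nonzero off-diagonal entry of $H_{\theta}$ has modulus one. A cycle component $C = (x_1, \dots, x_t, x_1)$ with $t \geq 3$, summed over its two traversals, contributes $A_{x_1,x_2}\cdots A_{x_t,x_1} + A_{x_1,x_t}\cdots A_{x_2,x_1}$; since $A$ is Hermitian these two products are complex conjugates, so their sum is $2\,\re(C)$, using the direction-independence of $\re(C)$ recorded above. For the sign, a single cycle of length $\ell$ has sign $(-1)^{\ell - 1}$, so if the cycles of $\sigma$ have lengths $\ell_1, \dots, \ell_k$ with $\sum_m \ell_m = j$ and $k = p(H)$, then $\sgn(\sigma) = \prod_m (-1)^{\ell_m - 1} = (-1)^{j - p(H)}$. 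Combining these, $\det(A[S]) = \sum_{H : V(H) = S} (-1)^{j - p(H)}\, 2^{|\MC{C}(H)|} \prod_{C \in \MC{C}(H)} \re(C)$; summing over all $S$ of size $j$, multiplying by $(-1)^j$, and using $(-1)^j (-1)^{j - p(H)} = (-1)^{p(H)}$ yields \eqref{0921-1}.

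There is no serious obstacle here: the computation is essentially bookkeeping. The one step that deserves care is the combinatorial dictionary in the second step — verifying that fixed-point-free permutations of $S$ with all $A_{v,\sigma(v)}$ nonzero correspond, after forgetting traversal directions, bijectively to elementary subgraphs with vertex set $S$ — together with the clean sign count $\sgn(\sigma) = (-1)^{j - p(H)}$ and the Hermiticity identity turning a conjugate pair of cyclic products into $2\,\re(C)$. Since the statement is quoted verbatim from \cite{MKS}, one may alternatively simply cite it, but the self-contained argument above is short enough to spell out.
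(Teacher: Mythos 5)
Your argument is correct: the expansion $a_j=(-1)^j\sum_{|S|=j}\det(A[S])$, the restriction to fixed-point-free permutations, the dictionary ``permutation $=$ elementary subgraph $+$ a traversal direction for each cycle of length $\geq 3$'', the evaluation $A_{v,w}A_{w,v}=1$ for $K_2$-components and $2\,\re(C)$ for conjugate pairs of cyclic products, and the sign count $\sgn(\sigma)=(-1)^{j-p(H)}$ all fit together exactly as needed to give \eqref{0921-1}. The paper itself offers no proof --- it quotes the result as Corollary~3.1 of \cite{MKS} (stated there for complex unit gain graphs) --- and your Sachs-type Leibniz expansion is essentially the same standard argument used in that source, so there is nothing to add beyond the citation.
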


Note that Equality~(\ref{0921-1}) should be precisely described as
\[
a_j = \begin{cases}
\sum_{H \in \MC{H}_j(G)} (-1)^{p(H)}2^{|\MC{C}(H)|} \prod_{C \in \MC{C}(H)} \re(C) \quad &\text{if $\MC{H}_j(G) \neq \emptyset$ and $\MC{C}(H) \neq \emptyset$,} \\
\sum_{H \in \MC{H}_j(G)} (-1)^{p(H)} \quad &\text{if $\MC{H}_j(G) \neq \emptyset$ and $\MC{C}(H) = \emptyset$,} \\
0 \quad &\text{if $\MC{H}_j(G) = \emptyset$.}
\end{cases}
\]
There is no elementary subgraph with $1$ vertex, so
\begin{equation} \label{0923-2}
a_1 = 0.
\end{equation}
Furthermore, an elementary subgraph with $2$ vertices corresponds to an edge of the underlying graph,
hence
\begin{equation} \label{0923-3}
a_2 = -|E(\G(G))|.
\end{equation}
For observation,
let us compute the characteristic polynomial of $H_{\theta}(G)$
for the mixed graph $G$ in Figure~\ref{0923-1} of Section~\ref{Intro}.
Let $\det(xI_4 - H_{\theta}(G)) = x^4 + a_1x^3 + a_2x^2 + a_3x + a_4$.
By Equalities~(\ref{0923-2}) and~(\ref{0923-3}), we have $a_1 = 0$ and $a_2 = -6$.
To compute $a_3$ and $a_4$,
we find all elementary subgraphs with $3$ and $4$ vertices.
The elementary subgraphs in $\MC{H}_3(G)$ and $\MC{H}_4(G)$
are shown in Figure~\ref{0923-H3} and Figure~\ref{0923-H4}, respectively.
For each elementary subgraph,
computing $(-1)^{p(H)}2^{|\MC{C(H)|}} \prod_{C \in \MC{C}(H)} \re(C)$ in Theorem~\ref{0923-4}
yields $a_3 = -2(1 + \cos \theta + \cos 2\theta + \cos 3\theta)$
and $a_4 = 3 - 2(\cos \theta + \cos 2\theta + \cos 3\theta)$.
In particular, we have $\det(xI_4 - H_{\theta}(G)) = x^4 - 6x^2 + 5$ for $\theta = \frac{\pi}{2}$.
Thus, $\Spec(G; \frac{\pi}{2}) = \{ \pm \sqrt{5}, \pm 1 \}$,
so we see that $G$ has the $\frac{\pi}{2}$-symmetric spectrum even though it is non-bipartite.

\begin{figure}[ht]
\begin{center}
\begin{tikzpicture}
[scale = 0.7,
line width = 0.8pt,
v/.style = {circle, fill = black, inner sep = 0.8mm},u/.style = {circle, fill = white, inner sep = 0.1mm}]
  \node[v] (1) at (0, 0) {};
  \node[v] (2) at (2, 0) {};
  \node[v] (3) at (2, 2) {};
  \draw[-] (1) to (2);
  \draw[->] (3) to (2); 
  \draw[->] (3) to (1);
\end{tikzpicture},
$\quad$
\begin{tikzpicture}
[scale = 0.7,
line width = 0.8pt,
v/.style = {circle, fill = black, inner sep = 0.8mm},u/.style = {circle, fill = white, inner sep = 0.1mm}]
  \node[v] (1) at (0, 0) {};
  \node[v] (2) at (2, 0) {};
  \node[v] (4) at (0, 2) {};
  \draw[-] (1) to (2);
  \draw[->] (4) to (1);
  \draw[->] (2) to (4);
\end{tikzpicture},
$\quad$
\begin{tikzpicture}
[scale = 0.7,
line width = 0.8pt,
v/.style = {circle, fill = black, inner sep = 0.8mm},u/.style = {circle, fill = white, inner sep = 0.1mm}]
  \node[v] (1) at (0, 0) {};
  \node[v] (3) at (2, 2) {};
  \node[v] (4) at (0, 2) {};
  \draw[->] (4) to (3);
  \draw[->] (4) to (1);
  \draw[->] (3) to (1);
\end{tikzpicture},
$\quad$
\begin{tikzpicture}
[scale = 0.7,
line width = 0.8pt,
v/.style = {circle, fill = black, inner sep = 0.8mm},u/.style = {circle, fill = white, inner sep = 0.1mm}]
  \node[v] (2) at (2, 0) {};
  \node[v] (3) at (2, 2) {};
  \node[v] (4) at (0, 2) {};
  \draw[->] (3) to (2); 
  \draw[->] (4) to (3);
  \draw[->] (2) to (4);
\end{tikzpicture}
\caption{All elementary subgraphs of $G$ with $3$ vertices} \label{0923-H3}
\end{center}
\end{figure}

\begin{figure}[ht]
\begin{center}
\begin{tikzpicture}
[scale = 0.7,
line width = 0.8pt,
v/.style = {circle, fill = black, inner sep = 0.8mm},u/.style = {circle, fill = white, inner sep = 0.1mm}]
  \node[v] (1) at (0, 0) {};
  \node[v] (2) at (2, 0) {};
  \node[v] (3) at (2, 2) {};
  \node[v] (4) at (0, 2) {};
  \draw[-] (1) to (2);
  \draw[->] (3) to (2); 
  \draw[->] (4) to (3);
  \draw[->] (4) to (1);
\end{tikzpicture},
$\quad$
\begin{tikzpicture}
[scale = 0.7,
line width = 0.8pt,
v/.style = {circle, fill = black, inner sep = 0.8mm},u/.style = {circle, fill = white, inner sep = 0.1mm}]
  \node[v] (1) at (0, 0) {};
  \node[v] (2) at (2, 0) {};
  \node[v] (3) at (2, 2) {};
  \node[v] (4) at (0, 2) {};
  \draw[->] (3) to (2); 
  \draw[->] (4) to (1);
  \draw[->] (3) to (1);
  \draw[->] (2) to (4);
\end{tikzpicture},
$\quad$
\begin{tikzpicture}
[scale = 0.7,
line width = 0.8pt,
v/.style = {circle, fill = black, inner sep = 0.8mm},u/.style = {circle, fill = white, inner sep = 0.1mm}]
  \node[v] (1) at (0, 0) {};
  \node[v] (2) at (2, 0) {};
  \node[v] (3) at (2, 2) {};
  \node[v] (4) at (0, 2) {};
  \draw[-] (1) to (2);
  \draw[->] (4) to (3);
  \draw[->] (3) to (1);
  \draw[->] (2) to (4);
\end{tikzpicture},
$\quad$
\begin{tikzpicture}
[scale = 0.7,
line width = 0.8pt,
v/.style = {circle, fill = black, inner sep = 0.8mm},u/.style = {circle, fill = white, inner sep = 0.1mm}]
  \node[v] (1) at (0, 0) {};
  \node[v] (2) at (2, 0) {};
  \node[v] (3) at (2, 2) {};
  \node[v] (4) at (0, 2) {};
  \draw[-] (1) to (2);
  \draw[->] (4) to (3);
\end{tikzpicture},
$\quad$
\begin{tikzpicture}
[scale = 0.7,
line width = 0.8pt,
v/.style = {circle, fill = black, inner sep = 0.8mm},u/.style = {circle, fill = white, inner sep = 0.1mm}]
  \node[v] (1) at (0, 0) {};
  \node[v] (2) at (2, 0) {};
  \node[v] (3) at (2, 2) {};
  \node[v] (4) at (0, 2) {};
  \draw[->] (3) to (2); 
  \draw[->] (4) to (1);
\end{tikzpicture},
$\quad$
\begin{tikzpicture}
[scale = 0.7,
line width = 0.8pt,
v/.style = {circle, fill = black, inner sep = 0.8mm},u/.style = {circle, fill = white, inner sep = 0.1mm}]
  \node[v] (1) at (0, 0) {};
  \node[v] (2) at (2, 0) {};
  \node[v] (3) at (2, 2) {};
  \node[v] (4) at (0, 2) {};
  \draw[->] (3) to (1);
  \draw[->] (2) to (4);
\end{tikzpicture}
\caption{All elementary subgraphs of $G$ with $4$ vertices} \label{0923-H4}
\end{center}
\end{figure}

As can be seen from the above observation,
we have
\[ (-1)^{p(H)}2^{|\MC{C(H)|}} \prod_{C \in \MC{C}(H)} \re(C)
\in \MB{Z}[\cos \theta, \cos 2\theta, \dots, \cos j \theta] \]
for any elementary subgraph $H$ with $j$ vertices,
where
\[ \MB{Z}[\alpha_1, \dots, \alpha_s] = \{ f(\alpha_1, \dots, \alpha_s) \mid f \in \MB{Z}[x_1, \dots, x_s] \}. \]
On the other hand,
the trigonometric addition formula derives $\MB{Z}[\cos \theta, \cos 2\theta, \dots, \cos j \theta] = \MB{Z}[\cos \theta]$,
so we obtain
\begin{equation} \label{0925-1}
a_j \in \MB{Z}[\cos \theta]
\end{equation}
 for $j = 1,2, \dots, n$.
We focus on the coefficients of the characteristic polynomial of $\theta$-Hermitian adjacency matrix
to investigate symmetry of the spectrum of a mixed graph.
The following is intuitively almost obvious but useful.

\begin{lem} \label{0926-3}
{\it
Let $G$ be a mixed graph with $n$ vertices,
and let $\Phi(x) = \sum_{j = 0}^n a_j x^{n-j}$ be the characteristic polynomial of $H_{\theta}(G)$ for $\theta \in (0, \pi]$.
Then, $G$ has the $\theta$-symmetric spectrum if and only if $a_{2l-1} = 0$ for any $l$.
}
\end{lem}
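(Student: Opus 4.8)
The plan is to work directly from the characteristic polynomial $\Phi(x) = \sum_{j=0}^n a_j x^{n-j}$, whose coefficients are real (indeed, by~(\ref{0925-1}), they lie in $\MB{Z}[\cos\theta]$), since $H_\theta(G)$ is Hermitian and hence has real spectrum. The key elementary observation is that the multiset of eigenvalues is symmetric with respect to the origin \emph{if and only if} $\Phi(-x) = (-1)^n \Phi(x)$, i.e.\ $\Phi$ is an even or odd polynomial according to the parity of $n$. This is the natural reformulation, and proving it is the heart of the lemma.

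First I would establish the ``only if'' direction. Suppose the spectrum is symmetric, listing the eigenvalues (with multiplicity) as $\lambda_1, \dots, \lambda_n$. Then $\Phi(x) = \prod_{k=1}^n (x - \lambda_k)$, and replacing $x$ by $-x$ gives $\Phi(-x) = \prod_k(-x-\lambda_k) = (-1)^n \prod_k (x + \lambda_k) = (-1)^n \prod_k (x - \lambda_k)$, where the last equality uses that $\{-\lambda_k\} = \{\lambda_k\}$ as multisets. Hence $\Phi(-x) = (-1)^n \Phi(x)$. For the ``if'' direction, if $\Phi(-x) = (-1)^n \Phi(x)$ then for every root $\mu$ of $\Phi$ we get $\Phi(-\mu) = (-1)^n \Phi(\mu) = 0$, so $-\mu$ is also a root; matching multiplicities follows because $\Phi(x)$ and $(-1)^n\Phi(-x)$ are the same polynomial, so the multiplicity of $\mu$ as a root of $\Phi$ equals the multiplicity of $-\mu$. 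Therefore the spectrum is symmetric.

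Next I would translate the condition $\Phi(-x) = (-1)^n \Phi(x)$ into vanishing of coefficients. Writing $\Phi(-x) = \sum_{j=0}^n a_j (-1)^{n-j} x^{n-j}$ and comparing with $(-1)^n \Phi(x) = \sum_{j=0}^n (-1)^n a_j x^{n-j}$ term by term, the coefficient of $x^{n-j}$ matches automatically when $j$ is even (since $(-1)^{n-j} = (-1)^n$), and forces $a_j = 0$ when $j$ is odd (since then $(-1)^{n-j} = -(-1)^n$, so the identity reads $-(-1)^n a_j = (-1)^n a_j$, i.e.\ $2a_j = 0$). Conversely, if $a_j = 0$ for all odd $j$, the two polynomials agree coefficientwise. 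Thus $\Phi(-x) = (-1)^n\Phi(x)$ is equivalent to $a_{2l-1} = 0$ for all $l$, and combining with the previous paragraph completes the proof.

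There is no real obstacle here; the statement is, as the authors say, almost obvious. The only point requiring a word of care is the multiplicity bookkeeping in the ``if'' direction --- one should phrase it via equality of polynomials rather than merely ``every root has its negative as a root,'' so that multiplicities are handled automatically. I would also note explicitly at the outset that realness of the $a_j$ (hence of $\Phi$ as a real polynomial) and realness of the spectrum are what make the root-pairing argument legitimate; both follow from $H_\theta(G)$ being Hermitian.
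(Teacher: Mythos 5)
Your proof is correct, and it takes a slightly different route from the paper's. The paper argues directly on factorizations: for the ``only if'' direction it pairs the nonzero eigenvalues and writes $\Phi(x) = x^m(x^2-\alpha_1^2)\cdots(x^2-\alpha_t^2)$, and for the ``if'' direction it splits into cases on the parity of $n$, substitutes $u = x^2$, and recovers the eigenvalues as $\pm\sqrt{\alpha}$ for roots $\alpha$ of the resulting polynomial (dividing out the factor $x$ first when $n$ is odd). You instead route everything through the single functional equation $\Phi(-x) = (-1)^n\Phi(x)$: symmetry of the spectrum is equivalent to this identity (with multiplicities handled automatically by equality of polynomials), and the identity is equivalent, by coefficientwise comparison, to the vanishing of all odd-indexed $a_j$. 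Your version buys a uniform treatment --- no case split on the parity of $n$, no separate handling of the zero eigenvalue, and multiplicity bookkeeping done once via polynomial identity rather than via the explicit pairing and square-root extraction; the paper's version is more hands-on but needs the tacit fact that the roots of $\Psi(u)$ are nonnegative reals (which holds since the spectrum of the Hermitian matrix is real) to make sense of $\pm\sqrt{\alpha}$. Your explicit remark that realness of the spectrum underpins the root-pairing is exactly the right point of care, and the proof is complete as written.
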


\begin{proof}
First, we suppose that $G$ has the $\theta$-symmetric spectrum.
Let $m$ be the multiplicity of $0 \in \Spec(G; \theta)$.
Since $G$ has the $\theta$-symmetric spectrum,
$\alpha \in \Spec(G; \theta) \setminus \{0\}$ if and only if $-\alpha \in \Spec(G; \theta) \setminus \{0\}$ including multiplicity.
Thus, the characteristic polynomial $\Phi(x)$ can be displayed as
$\Phi(x) = x^m(x^2 - \alpha_1^2)(x^2 - \alpha_2^2) \cdots (x^2 - \alpha_t^2)$ for $t$ such that $m+2t = n$.
Then, there exist $s_1, \dots, s_t \in \MB{R}$ such that
$\Phi(x) = x^m(x^{2t}+s_1x^{2t-2} + \cdots + s_{t-1}x^2 + s_t) = \sum_{j = 0}^t s_j x^{n - 2j}$.
In particular,
the coefficients except $x^n, x^{n-2}, \dots, x^{n-2t}$ are zero.
This implies that $a_{2l-1} = 0$ for any $l$.

Next, we suppose that $a_{2l-1} = 0$ for any $l$.
If $n$ is even, we can set $n = 2m$.
We have $\Phi(x) = \sum_{j = 0}^m a_{2j}x^{2m-2j} = \sum_{j = 0}^m a_{2j}(x^2)^{m-j}$.
Then, let $\Psi(u)$ be the polynomial by putting $u = x^2$ in $\Phi(x)$.
For a root $\alpha$ of the polynomial $\Psi(u)$,
the real numbers $\pm \sqrt{\alpha}$ are roots of the characteristic polynomial $\Phi(x)$.
Therefore, $G$ has the $\theta$-symmetric spectrum.
When $n$ is odd, the constant term of $\Phi(x)$ is zero.
Thus, applying the discussion for the case where $n$ is even
to the polynomial $\frac{\Phi(x)}{x}$ completes the proof.
\end{proof}

\section{Strange angles} \label{SA}
In this section, we show that ``strange" angles such as $\theta = 1$,
$\theta = \frac{\sqrt{2}}{2}\pi$, and $\theta = \sqrt{2}\pi - 3$,
which are not usually used as an angle, have the bipartite detection property. 
More precisely,
we show that angles whose cosines are not algebraic numbers have the bipartite detection property.
For a mixed graph $G$,
the coefficients of the characteristic polynomial of $H_{\theta}(G)$
can be viewed as polynomials of $\cos \theta$ as we saw in~(\ref{0925-1}) i.e.,
there exists $f_j \in \MB{Z}[x]$ such that $a_j = f_j(\cos \theta)$ for any $j$.
In studying the bipartite detection property,
the key is to check whether the polynomial $f_j$ is the zero polynomial or not.

\begin{lem} \label{0926-2}
{\it
Let $G$ be a mixed graph with $n$ vertices,
and let $\Phi(x) = \sum_{j = 0}^n a_j x^{n-j}$ be the characteristic polynomial of $H_{\theta}(G)$ for $\theta \in (0, \pi]$.
If $G$ is non-bipartite,
then there exist a positive integer $l$ and a polynomial $f_{2l-1} \in \MB{Z}[x] \setminus \{0\}$
such that $a_{2l-1} = f_{2l-1}(\cos \theta)$.
}
\end{lem}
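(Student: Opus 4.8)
The plan is to exploit the formula \eqref{0921-1} for the coefficients together with the fact that $G$ non-bipartite means $\G(G)$ contains an odd cycle. The key observation is that if $C$ is a cycle of odd length $2l-1$ in $\G(G)$, then the subgraph of $G$ supported on the vertices of $C$ is an elementary subgraph with $2l-1$ vertices consisting of a single cycle, and it contributes a term to $a_{2l-1}$ of the form $-2\,\re(C)$ (using $p=1$, $|\MC{C}(H)|=1$). So I would fix $l$ to be half of (one more than) the length of a \emph{shortest} odd cycle in $\G(G)$; equivalently, let $2l-1 = g_{\mathrm{odd}}$, the odd girth. With this choice, every $H \in \MC{H}_{2l-1}(G)$ is itself connected: indeed any elementary subgraph on $2l-1$ vertices whose components are $K_2$'s and cycles must, having an odd number of vertices, contain at least one odd cycle component, and that component has at least $g_{\mathrm{odd}} = 2l-1$ vertices, forcing it to be the whole of $H$. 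Hence $\MC{H}_{2l-1}(G)$ consists precisely of the odd cycles of length $2l-1$ in $G$ (viewed as subgraphs), and
\[
a_{2l-1} \;=\; \sum_{C} (-1)^{1} 2^{1}\,\re(C) \;=\; -2\sum_{C} \re(C),
\]
the sum over all cycles $C$ of length $2l-1$ in $\G(G)$.

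It now remains to show that, written as a polynomial $f_{2l-1}(\cos\theta)$, this is not the zero polynomial. For a single cycle $C = (x_1,\dots,x_{2l-1},x_1)$, the product $(H_\theta)_{x_1,x_2}\cdots (H_\theta)_{x_{2l-1},x_1}$ is a product of $2l-1$ factors, each of which is $1$, $e^{i\theta}$, or $e^{-i\theta}$; thus it equals $e^{i k_C \theta}$ for some integer $k_C$ with $|k_C| \le 2l-1$, and $\re(C) = \cos(k_C\theta)$. Writing $\cos(k\theta)$ as the Chebyshev polynomial $T_{|k|}(\cos\theta)$, which has leading term $2^{|k|-1} x^{|k|}$ for $|k|\ge 1$ and equals $1$ for $k=0$, I would argue as follows: among all cycles $C$ of length $2l-1$, pick one maximizing $|k_C|$. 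If that maximum value of $|k_C|$ is positive, then the degree-$|k_C|$ coefficient of $f_{2l-1}$ receives a nonzero contribution $-2\cdot 2^{|k_C|-1}$ from each $C$ achieving the maximum, and these contributions all have the same sign (all negative), so they cannot cancel; hence $f_{2l-1}\neq 0$. If instead $k_C = 0$ for every cycle $C$ of length $2l-1$ — e.g. when $G$ is an oriented graph in which every shortest odd cycle happens to have balanced arc-directions, or more simply when $\G(G)$'s odd girth is realized but all relevant $\re(C)=1$ — then $f_{2l-1}$ is the nonzero constant polynomial $-2\cdot(\#\text{ such cycles})$, which is again in $\MB{Z}[x]\setminus\{0\}$ since there is at least one odd cycle of length $2l-1$.

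The main obstacle is the potential cancellation among the many cycles of length $2l-1$ when their $k_C$ values differ; the point that rescues the argument is that the top-degree term of each Chebyshev polynomial $T_{|k|}$ has a \emph{fixed sign} (positive leading coefficient), so after the global factor $-2$ all the highest-degree contributions are negative and add up rather than cancel. One should be a little careful that $2l-1 \le n$ so that $\MC{H}_{2l-1}(G)$ is genuinely nonempty — this is automatic since the odd girth of a graph on $n$ vertices is at most $n$ — and that the parity bookkeeping ($2l-1$ odd) is used correctly to force connectedness of the elementary subgraphs. With these points in place the lemma follows; note this also pins down $l$ explicitly as $\tfrac12(g_{\mathrm{odd}}+1)$, which will presumably be convenient for the minimality discussion later in the paper.
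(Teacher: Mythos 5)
Your proposal is correct and follows essentially the same route as the paper: take $2l-1$ to be the odd girth, note that every elementary subgraph on $2l-1$ vertices must then be a single shortest odd cycle, and conclude $a_{2l-1} = -2\sum_C \re(C)$. Your Chebyshev-leading-coefficient argument for why the resulting integer polynomial in $\cos\theta$ is not identically zero is a welcome explicit justification of a step the paper merely asserts.
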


\begin{proof}
Since $G$ is non-bipartite,
there exists an odd cycle in $G$.
We denote the odd cycles in $G$ with the smallest length as $C_1, C_2, \dots, C_s$,
and let the length of the odd cycles be $2l-1$.
The elementary subgraphs of $G$ with $2l-1$ vertices consist only of connected cycles of length $2l-1$,
i.e., $\MC{H}_{2l-1}(G) = \{ C_1, C_2, \dots, C_s \}$ and $s \geq 1$.
By Theorem~\ref{0923-4},
we have $a_{2l-1} = -2\sum_{j = 1}^s \re (C_j)$.
In particular, there exists $f_{2l-1} \in \MB{Z}[x] \setminus \{0\}$ such that $a_{2l-1} = f_{2l-1}(\cos \theta)$.
\end{proof}

A complex number $\alpha$ is an {\it algebraic number} 
if there exists a polynomial $p \in \MB{Z}[x] \setminus \{0\}$ such that $p(\alpha) = 0$.
The numbers $1$, $\frac{2}{3}$ and $\sqrt{5}$ are algebraic numbers,
but it is well-known that both $\pi$ and $e$ are not algebraic numbers.
We denote the set of all algebraic numbers by $\bar{\MB{Q}}$.
It is well-known that $\bar{\MB{Q}}$ forms a field \cite{L}.
The following is essential to obtain our first main theorem.

\begin{thm} \label{1224-1}
{\it
Let $\theta \in (0, \pi]$.
If $\cos \theta$ is not an algebraic number,
then $\theta$ has the bipartite detection property.
}
\end{thm}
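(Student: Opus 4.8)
The plan is to prove the contrapositive-style equivalence directly: since a bipartite mixed graph always has the $\theta$-symmetric spectrum by Lemma~\ref{0926-1}, it suffices to show that when $\cos\theta$ is not algebraic, every non-bipartite mixed graph $G$ fails to have the $\theta$-symmetric spectrum. So assume $G$ is non-bipartite. By Lemma~\ref{0926-2} there is a positive integer $l$ and a nonzero polynomial $f_{2l-1}\in\MB{Z}[x]\setminus\{0\}$ with $a_{2l-1}=f_{2l-1}(\cos\theta)$, where the $a_j$ are the coefficients of the characteristic polynomial of $H_\theta(G)$.

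The key step is then a transcendence argument. Since $f_{2l-1}$ is a nonzero polynomial with integer (in particular algebraic) coefficients, and $\cos\theta$ is \emph{not} an algebraic number, we have $f_{2l-1}(\cos\theta)\neq 0$: otherwise $\cos\theta$ would be a root of a nonzero polynomial in $\MB{Z}[x]$, contradicting the assumption that $\cos\theta\notin\bar{\MB{Q}}$. Hence $a_{2l-1}\neq 0$. By Lemma~\ref{0926-3}, the existence of a nonzero odd-indexed coefficient $a_{2l-1}$ means precisely that $G$ does \emph{not} have the $\theta$-symmetric spectrum. Combining this with Lemma~\ref{0926-1} gives the desired equivalence: $G$ is bipartite if and only if $G$ has the $\theta$-symmetric spectrum, which is exactly the bipartite detection property for $\theta$.

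I do not expect a genuine obstacle here, since all the substantive work has been packaged into the earlier lemmas; the proof is essentially a one-line transcendence observation glued between Lemma~\ref{0926-2} and Lemma~\ref{0926-3}. The only point requiring a modicum of care is making sure the polynomial supplied by Lemma~\ref{0926-2} really has coefficients in $\MB{Z}$ (equivalently $\bar{\MB{Q}}$) so that the definition of algebraic number applies verbatim; this is guaranteed by~(\ref{0925-1}) and the explicit form $a_{2l-1}=-2\sum_{j}\re(C_j)$, where each $\re(C_j)$ lies in $\MB{Z}[\cos\theta,\cos2\theta,\dots]=\MB{Z}[\cos\theta]$. One should also note that $l$ and $f_{2l-1}$ depend on $G$, but that is harmless: the bipartite detection property quantifies over all $G$, and for each non-bipartite $G$ we have produced a witness index $2l-1$.
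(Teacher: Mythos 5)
Your proposal is correct and follows essentially the same route as the paper: combine Lemma~\ref{0926-1} with Lemma~\ref{0926-2} and Lemma~\ref{0926-3}, observing that $f_{2l-1}(\cos\theta)=0$ would force $\cos\theta$ to be algebraic. The only cosmetic difference is that the paper phrases this as a proof by contradiction, while you argue directly that $a_{2l-1}\neq 0$; the content is identical.
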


\begin{proof}
Let $G$ be a mixed graph with $n$ vertices,
and let $\Phi(x) = \sum_{j = 0}^n a_j x^{n-j}$ be the characteristic polynomial of $H_{\theta}(G)$.
By Lemma~\ref{0926-1},
it is sufficient to show that $G$ is bipartite if it has the $\theta$-symmetric spectrum.
We would like to show the contrapositive of this statement.
Suppose $G$ is non-bipartite.
By Lemma~\ref{0926-2},
there exist a positive integer $l$ and a polynomial $f_{2l-1} \in \MB{Z}[x] \setminus \{0\}$
such that $a_{2l-1} = f_{2l-1}(\cos \theta)$.
We assume that $G$ has the $\theta$-symmetric spectrum.
Then Lemma~\ref{0926-3} derives $a_{2l-1} = 0$, i.e., $f_{2l-1}(\cos \theta) = 0$.
This implies that $\cos \theta$ is an algebraic number, which contradicts our assumption.
Therefore, $G$ does not have the $\theta$-symmetric spectrum.
\end{proof}

From the above theorem,
we only need to find angles whose cosines are not algebraic numbers.
This is achieved by borrowing some well-known theorems from transcendental number theory.
The first theorem we use is the Lindemann--Weierstrass theorem.


\begin{thm}[Theorem~1.4 in \cite{B}] \label{0126-1}
{\it
For any distinct algebraic numbers $\alpha_1, \dots, \alpha_n$ and
any non-zero algebraic number $\beta_1 \dots, \beta_n$,
we have $\beta_1 e^{\alpha_1} + \cdots + \beta_n e^{\alpha_n} \neq 0$.
}
\end{thm}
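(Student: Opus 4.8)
The statement is the classical Lindemann--Weierstrass theorem, so the plan is to run the Hermite--Lindemann argument in the symmetrized form due to Weierstrass; it has an algebraic half and an analytic half. The algebraic half reduces the claim to the following: there is \emph{no} nontrivial relation $c_{1}e^{\delta_{1}}+\cdots+c_{N}e^{\delta_{N}}=0$ in which the $c_{l}$ are algebraic integers, the $\delta_{l}$ are distinct algebraic numbers whose set is stable under $\operatorname{Gal}(\overline{\mathbb{Q}}/\mathbb{Q})$, and the assignment $\delta\mapsto c_{\delta}$ is Galois-equivariant. To reach this form, suppose for contradiction that $\beta_{1}e^{\alpha_{1}}+\cdots+\beta_{n}e^{\alpha_{n}}=0$ with the $\alpha_{i}$ distinct and all $\beta_{i}\neq 0$, let $L$ be a finite Galois extension of $\mathbb{Q}$ containing all $\alpha_{i}$ and $\beta_{i}$, and form
\[
\prod_{\sigma\in\operatorname{Gal}(L/\mathbb{Q})}\Bigl(\textstyle\sum_{i=1}^{n}\sigma(\beta_{i})\,e^{\sigma(\alpha_{i})}\Bigr)=0 ,
\]
which vanishes since its $\sigma=\mathrm{id}$ factor is the assumed relation. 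Expanding and collecting equal exponents yields $\sum_{l}c_{l}e^{\delta_{l}}=0$; the symmetry of the construction makes the exponent set Galois-stable and $\delta\mapsto c_{\delta}$ equivariant, and clearing a common denominator makes the $c_{l}$ algebraic integers. The relation is nontrivial: picking $\lambda\in\mathbb{C}^{\times}$ for which $z\mapsto\re(\lambda z)$ is injective on the finite set of all conjugates of all $\alpha_{i}$, the expanded term whose exponent maximizes $\re(\lambda\,\cdot\,)$ is produced by a unique choice of one summand per factor, so its collected coefficient is a single product of conjugates of the $\beta_{i}$, hence nonzero.

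For the analytic half, fix a large prime $p$, choose an integer $\ell\ge 1$ with every $\ell\delta_{l}$ an algebraic integer, and pick one Galois orbit $\mathcal{O}$ inside $\{\delta_{l}\}$ on which $c$ does not vanish. Set
\[
f(x)=\frac{\ell^{Np}\,\prod_{\delta\in\mathcal{O}}(x-\delta)^{p-1}\,\prod_{\delta\notin\mathcal{O}}(x-\delta)^{p}}{(p-1)!} ,
\]
so that $(p-1)!\,f\in\mathbb{Z}[x]$ and every $f^{(j)}(\delta_{l})$ is an algebraic integer. Following Hermite, put $I(z)=\int_{0}^{z}e^{z-t}f(t)\,dt$, which by repeated integration by parts equals $e^{z}F(0)-F(z)$ with $F=\sum_{j\ge 0}f^{(j)}$. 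Then $J:=\sum_{l}c_{l}\,I(\delta_{l})=F(0)\sum_{l}c_{l}e^{\delta_{l}}-\sum_{l}c_{l}F(\delta_{l})=-\sum_{l}c_{l}F(\delta_{l})$.

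The contradiction is a squeeze on $J$. Analytically, $|I(z)|\le|z|\,e^{|z|}\max_{|t|\le|z|}|f(t)|$ and $\max_{|t|\le R}|f(t)|\le C_{1}^{p}/(p-1)!$ for a constant $C_{1}$ independent of $p$, so $|J|\le C_{2}^{p}/(p-1)!\to 0$ as $p\to\infty$. Arithmetically, $f^{(j)}(\delta)=0$ for $j<p-1$ when $\delta\in\mathcal{O}$ and for $j<p$ when $\delta\notin\mathcal{O}$; each orbit sum $\sum_{\delta\in\text{orbit}}c_{\delta}f^{(j)}(\delta)$ is a rational integer by Galois-equivariance and is divisible by $p$ for every $j\ge p$; and the remaining contribution, coming from $\sum_{\delta\in\mathcal{O}}c_{\delta}f^{(p-1)}(\delta)$, is a nonzero rational integer not divisible by $p$ once $p$ avoids an explicit finite set of primes, since it is built multiplicatively out of $\ell$, differences of the distinct $\delta$'s, and the norm of $c$ on $\mathcal{O}$. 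Hence $J$ is a nonzero integer, so $|J|\ge 1$; taking $p$ large enough to satisfy all the constraints gives $|J|<1$, a contradiction. This proves $\beta_{1}e^{\alpha_{1}}+\cdots+\beta_{n}e^{\alpha_{n}}\neq 0$, and the stated formulation follows because $\overline{\mathbb{Q}}$ is a field.

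I expect the algebraic half to be the real obstacle, together with the one arithmetic point inside the analytic half. The two delicate places are: (i) showing that the symmetrized, collected relation is nontrivial, which the generic-direction argument handles by isolating an extremal exponent whose coefficient collapses to one product of $\beta_{i}$-conjugates, hence nonzero; and (ii) checking that the orbit sums of the $f^{(j)}(\delta_{l})$ are rational integers with the correct divisibility by $p$ --- although the individual $c_{l}$ and $f^{(j)}(\delta_{l})$ need not be rational, their sums over complete Galois orbits are, and raising the distinguished orbit $\mathcal{O}$ only to the $(p-1)$st power is exactly what makes the lowest-order contribution escape divisibility by $p$. Once these are settled, the growth estimate on $f$, the integration-by-parts identity, and the $p$-adic bookkeeping are routine and only require $p$ sufficiently large.
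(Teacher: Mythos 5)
This statement is the Lindemann--Weierstrass theorem, which the paper does not prove: it is imported verbatim as Theorem~1.4 of Baker's book, so there is no in-paper argument to compare against and your proposal must stand on its own as a proof of a classical deep result. Your algebraic half is fine and standard: the product over $\operatorname{Gal}(L/\mathbb{Q})$ acting simultaneously on the $\alpha_i$ and $\beta_i$, collection of equal exponents, Galois-stability and equivariance, and the extremal-exponent argument with a generic direction $\lambda$ to certify nontriviality are all correct (the maximizing selection is unique factor by factor, and any selection realizing the extremal exponent value must be that one).

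The genuine gap is in the arithmetic half, at exactly the point you flagged as delicate. By giving the whole orbit $\mathcal{O}$ multiplicity $p-1$, your critical term becomes the orbit sum $S_p=\sum_{\delta\in\mathcal{O}}c_\delta f^{(p-1)}(\delta)$, which is a \emph{trace}, not a norm; the claim that it is nonzero and prime to $p$ ``since it is built multiplicatively'' is false. Concretely, take $\mathcal{O}=\{i,-i\}$ with $c_i=c_{-i}=1$ and one further exponent $0$ (a perfectly possible Galois-stable, equivariant configuration after your reduction, with $\ell=1$): then $f^{(p-1)}(i)=(2i)^{p-1}i^{p}$ and $f^{(p-1)}(-i)=(-2i)^{p-1}(-i)^{p}=-(2i)^{p-1}i^{p}$, so $S_p\equiv 0$ identically, every other contribution to $J$ is divisible by $p$, and the lower bound $|J|\ge 1$ evaporates. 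Even when $S_p$ is not identically zero, its non-divisibility by a \emph{chosen} large prime $p$ depends on the Frobenius at $p$ and needs an argument (Chebotarev-type), which is absent. The standard repair --- and the route taken in Baker's proof of the cited Theorem~1.4 --- is to distinguish a single exponent $\delta_i$ (multiplicity $p-1$ at $\delta_i$, $p$ elsewhere), obtaining one quantity $J_i$ per exponent, and then to take the product of the $J_i$ over a complete conjugate set: the critical contribution is then a norm, i.e.\ a product of the nonzero numbers $c_i\,\ell^{Np}\prod_{k\neq i}(\delta_i-\delta_k)^{p}$, hence a nonzero rational integer coprime to all sufficiently large $p$, while the analytic bound $|J_1\cdots J_m|\le C^{p}/((p-1)!)^{m'}$ still tends to $0$. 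As written, your single-$J$, orbit-multiplicity variant does not prove the theorem.
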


\begin{cor} \label{0926-4}
{\it
Let $\theta \in (0, \pi]$.
If $\theta \in \bar{\MB{Q}} \setminus \{0\}$,
then $\cos \theta$ is not an algebraic number.
In particular,
an angle $\theta \in \bar{\MB{Q}} \setminus \{0\}$ has the bipartite detection property.
}
\end{cor}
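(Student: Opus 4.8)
The plan is to reduce the claim to the Lindemann--Weierstrass theorem (Theorem~\ref{0126-1}) via the identity $\cos\theta = \tfrac12(e^{i\theta} + e^{-i\theta})$. First I would argue by contradiction: assume $\theta \in \bar{\MB{Q}} \setminus \{0\}$ but $\cos\theta =: c$ is an algebraic number. Then $e^{i\theta}$ is a root of the polynomial $x^2 - 2cx + 1$, which has algebraic coefficients, so $e^{i\theta} \in \bar{\MB{Q}}$; since $\bar{\MB{Q}}$ is a field, $e^{-i\theta} = 2c - e^{i\theta} \in \bar{\MB{Q}}$ as well. Now consider the exponents $\alpha_1 = i\theta$ and $\alpha_2 = -i\theta$. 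Because $\theta$ is a nonzero algebraic number and $i \in \bar{\MB{Q}}$, both $\alpha_1$ and $\alpha_2$ are algebraic, and they are distinct (as $\theta \neq 0$). Applying Theorem~\ref{0126-1} with $n = 2$, $\beta_1 = 1$, $\beta_2 = -e^{i\theta} \cdot e^{-i\theta}$... — actually the cleaner route is the following.

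Instead of fighting with the coefficients, I would run Lindemann--Weierstrass in the form: $e^{\alpha}$ is transcendental for every nonzero algebraic $\alpha$ (the case $n=1$, $\beta_1 = 1$, $\alpha_1 = \alpha$, $\alpha_2 = 0$, $\beta_2 = -e^{\alpha}$ — or more simply, one typically quotes this as an immediate consequence). Taking $\alpha = i\theta$, which is a nonzero algebraic number since $\theta \in \bar{\MB{Q}} \setminus \{0\}$ and $i \in \bar{\MB{Q}}$, we conclude $e^{i\theta}$ is transcendental. But then $e^{i\theta}$ cannot satisfy $x^2 - 2cx + 1 = 0$ with $c$ algebraic, since that equation exhibits $e^{i\theta}$ as algebraic over $\bar{\MB{Q}}$, hence in $\bar{\MB{Q}}$. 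This contradiction shows $\cos\theta \notin \bar{\MB{Q}}$. The final sentence of the corollary then follows immediately: by Theorem~\ref{1224-1}, since $\cos\theta$ is not algebraic, $\theta$ has the bipartite detection property.

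The only subtlety — and the one place I would be careful — is the precise bookkeeping that turns Theorem~\ref{0126-1} into the statement "$e^{\alpha}$ transcendental for nonzero algebraic $\alpha$": if $e^{\alpha}$ were algebraic, say a root of $\sum_{k=0}^{m} b_k x^k$ with $b_k \in \MB{Z}$ not all zero, then $\sum_{k=0}^{m} b_k e^{k\alpha} = 0$ is a vanishing $\MB{Z}$-linear combination of $e^{0}, e^{\alpha}, \dots, e^{m\alpha}$; the exponents $0, \alpha, \dots, m\alpha$ are distinct algebraic numbers (using $\alpha \neq 0$), contradicting Theorem~\ref{0126-1} after discarding the terms with $b_k = 0$. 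This is routine but worth spelling out so the reader sees exactly how the hypothesis $\theta \neq 0$ is used (it guarantees $i\theta \neq 0$, hence the exponents are genuinely distinct). No serious obstacle arises beyond this; the proof is short.
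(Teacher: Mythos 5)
Your proof is correct, and it rests on the same transcendence input as the paper, namely the Lindemann--Weierstrass theorem (Theorem~\ref{0126-1}); only the reduction differs. The paper applies Theorem~\ref{0126-1} in one stroke to the vanishing combination $2\cos\theta\cdot e^{0}-e^{i\theta}-e^{-i\theta}=0$ with distinct algebraic exponents $0,\pm i\theta$, whereas you first upgrade ``$\cos\theta$ algebraic'' to ``$e^{i\theta}$ algebraic'' via the monic quadratic $x^{2}-2\cos\theta\,x+1$ and then invoke the Hermite--Lindemann special case ($e^{\alpha}$ transcendental for nonzero algebraic $\alpha$), which you correctly re-derive from Theorem~\ref{0126-1}. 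Two remarks: first, your intermediate step is exactly the content of the paper's Lemma~\ref{1231-1} (proved there via $\sin\theta=\sqrt{1-\cos^{2}\theta}$), so you could shorten by citing it; your quadratic derivation is arguably cleaner, but note it uses the standard fact that a root of a polynomial with coefficients in $\bar{\MB{Q}}$ again lies in $\bar{\MB{Q}}$, i.e.\ that $\bar{\MB{Q}}$ is algebraically closed, not merely a field. Second, your route is insensitive to the degenerate case $\cos\theta=0$, where the paper's coefficient $2\cos\theta$ vanishes and its three-term combination must be trimmed to two terms before Theorem~\ref{0126-1} applies, so your packaging quietly handles a detail the paper glosses over. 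The concluding appeal to Theorem~\ref{1224-1} is the same as in the paper.
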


\begin{proof}
Suppose that $\cos \theta$ is an algebraic number.
Then $0$, $i \theta$ and $-i \theta$ are distinct algebraic numbers since $\theta \neq 0$.
However, we have $2 \cos \theta \cdot e^0 - e^{i \theta} - e^{-i \theta} = 0$,
which contradicts the Lindemann--Weierstrass theorem (Theorem~\ref{0126-1}).
Thus, $\cos \theta$ is not an algebraic number.
Furthermore, Theorem~\ref{1224-1} derives that an angle $\theta \in \bar{\MB{Q}} \setminus \{0\}$ has the bipartite detection property.
\end{proof}

The second theorem we use is the Gelfond--Schneider theorem,
but we insert one lemma before it.

\begin{lem} \label{1231-1}
{\it
Let $\theta \in (0, \pi]$.
If $\cos \theta$ is an algebraic number,
then so is $e^{i\theta}$.
}
\end{lem}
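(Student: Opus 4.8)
The plan is to exhibit $e^{i\theta}$ as a root of an explicit quadratic polynomial whose coefficients are algebraic, and then invoke the fact that a root of a polynomial with algebraic coefficients is again algebraic.

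First I would recall the elementary identities $e^{i\theta} + e^{-i\theta} = 2\cos\theta$ and $e^{i\theta}\cdot e^{-i\theta} = 1$. Writing $z = e^{i\theta}$, these say precisely that $z$ and $z^{-1}$ are the two roots of
\[
q(z) = z^2 - (2\cos\theta)\,z + 1 .
\]
Since $\cos\theta$ is assumed to be an algebraic number and $\bar{\MB{Q}}$ is a field (as recalled in the excerpt, citing \cite{L}), the coefficient $2\cos\theta$ lies in $\bar{\MB{Q}}$, so $q \in \bar{\MB{Q}}[z]$ and $q$ is nonzero.

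Next I would argue that any root of a nonzero polynomial with coefficients in $\bar{\MB{Q}}$ is itself algebraic. Concretely, put $K = \MB{Q}(\cos\theta)$ and consider the tower $\MB{Q} \subseteq K \subseteq K(e^{i\theta})$: the first extension is finite because $\cos\theta$ is algebraic, and the second has degree at most $2$ because $e^{i\theta}$ satisfies the degree-two polynomial $q$ over $K$. By the tower law, $[\,\MB{Q}(e^{i\theta}) : \MB{Q}\,] \le [\,K(e^{i\theta}) : \MB{Q}\,] < \infty$, hence $e^{i\theta} \in \bar{\MB{Q}}$, which is the claim. (Equivalently, one may simply quote that $\bar{\MB{Q}}$ is algebraically closed.)

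I do not anticipate a genuine obstacle here; the only point deserving a word of justification rather than being treated as obvious is the passage from ``algebraic coefficients'' to ``algebraic root'', which is exactly the tower-law step above. As a variant that avoids mentioning field extensions altogether, one could instead observe that $\sin^2\theta = 1 - \cos^2\theta \in \bar{\MB{Q}}$, so $\sin\theta$ is algebraic, and then $e^{i\theta} = \cos\theta + i\sin\theta$ is algebraic since $i \in \bar{\MB{Q}}$ and $\bar{\MB{Q}}$ is closed under addition and multiplication.
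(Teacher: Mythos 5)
Your argument is correct. Your main route differs from the paper's: you exhibit $e^{i\theta}$ directly as a root of the quadratic $z^2 - (2\cos\theta)z + 1$, whose coefficients are algebraic, and then invoke the tower law (equivalently, that $\bar{\MB{Q}}$ is algebraically closed) to conclude $e^{i\theta} \in \bar{\MB{Q}}$. The paper instead argues via $\sin\theta$: since $1-\cos^2\theta$ is algebraic it has an integer annihilating polynomial $f$, and $g(x)=f(x^2)$ annihilates $\sin\theta$, so $\sin\theta$ is algebraic and hence $e^{i\theta}=\cos\theta+i\sin\theta$ is too. The paper's route needs only that $\bar{\MB{Q}}$ is a field (the fact it explicitly cites), producing an explicit integer polynomial, whereas your main route is shorter but leans on the stronger (though standard) fact that a root of a nonzero polynomial with algebraic coefficients is algebraic; your tower-law justification of that step is sound. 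The variant you sketch in your closing sentence is essentially identical to the paper's proof, so you in effect cover both approaches.
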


\begin{proof}
Since $\cos \theta$ is an algebraic number,
$1- \cos^2 \theta$ is also an algebraic number.
Thus, there exists a polynomial $f(x) \in \MB{Z}[x] \setminus \{0\}$ such that $f(1 - \cos^2 \theta) = 0$.
Define $g(x) = f(x^2)$.
We see that $g(\sin \theta) = g(\sqrt{1 - \cos^2 \theta}) = f(1 - \cos^2 \theta) = 0$.
Thus, $\sin \theta$ is an algebraic number,
and hence $e^{i \theta} = \cos \theta + i \sin \theta$ is also an algebraic number.
\end{proof}

\begin{thm}[\cite{L}] \label{0126-2}
{\it
If $\alpha$ is an algebraic number that is neither $0$ nor $1$
and if $\beta$ is algebraic irrational,  
then $\alpha^{\beta}$ is not an algebraic number.
}
\end{thm}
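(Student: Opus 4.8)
This is the Gelfond--Schneider theorem, one of the deep results of transcendental number theory, so the plan is to prove it by the auxiliary-function method of Gelfond and Schneider rather than by any elementary manipulation. Suppose for contradiction that $\alpha$, $\beta$ and $\gamma := \alpha^{\beta}$ are all algebraic, with $\alpha \neq 0, 1$ and $\beta \notin \mathbb{Q}$. Fix a determination of the logarithm and put $\ell := \log\alpha \neq 0$, so that $\alpha = e^{\ell}$ and $\gamma = e^{\beta\ell}$; it suffices to rule out algebraicity of this $\gamma$, since every value of $\alpha^{\beta}$ arises this way for some branch. Let $K := \mathbb{Q}(\alpha,\beta,\gamma)$ and $d := [K : \mathbb{Q}]$. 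The heart of the reduction is to consider the two entire functions
\[ f_1(z) = e^{\ell z} = \alpha^{z}, \qquad f_2(z) = e^{\beta \ell z} = \gamma^{z}, \]
both of order $1$. Because $\beta$ is irrational the frequencies $j + k\beta$ are pairwise distinct, so $f_1$ and $f_2$ are algebraically independent over $\mathbb{C}$; moreover $f_1' = \ell f_1$ and $f_2' = \beta\ell f_2$, so the ring $K[f_1, f_2]$ is stable under $d/dz$; and at every positive integer $z = s$ we have $f_1(s) = \alpha^{s} \in K$ and $f_2(s) = \gamma^{s} \in K$. Thus $f_1, f_2$ share the infinitely many algebraic points $1, 2, 3, \dots$. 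I would show this is impossible, which is precisely the content of the Schneider--Lang criterion; proving that criterion is the actual work.

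The first concrete step is the construction of an auxiliary function. Given a large degree parameter $N$ and a vanishing order $M$, Siegel's lemma yields integers $c_{jk}$ ($0 \le j,k < N$), not all zero and of size bounded explicitly in the parameters, such that
\[ G(z) = \sum_{j,k=0}^{N-1} c_{jk}\, e^{(j + k\beta)\ell z} \]
vanishes to order $\ge M$ at each of the points $s = 1, \dots, m$. This is a legitimate linear-algebra construction because
\[ \frac{G^{(t)}(s)}{\ell^{t}} = \sum_{j,k=0}^{N-1} c_{jk}\,(j+k\beta)^{t}\,\alpha^{js}\gamma^{ks} \in K, \]
so each imposed vanishing $G^{(t)}(s) = 0$ is a homogeneous $K$-linear equation in the unknowns $c_{jk}$; one chooses $N^{2}$ to exceed $d\,m\,M$ so that a nontrivial integer solution exists. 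The algebraic independence of $f_1, f_2$ guarantees $G \not\equiv 0$.

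The core estimate is then the comparison of a lower and an upper bound on a single nonzero quantity. Let $(s_0, t_0)$ be a point--order pair, with $t_0$ as small as possible, at which some derivative of $G$ fails to vanish, and set $\eta := G^{(t_0)}(s_0)/\ell^{t_0} \in K \setminus \{0\}$; by minimality every integer $1, \dots, m$ is a zero of $G$ of order $\ge t_0$. Arithmetically, clearing denominators makes $\eta$ an algebraic integer whose house and denominator are controlled by the parameters, and since the product of its $d$ conjugates is a nonzero rational integer of absolute value $\ge 1$, one gets a Liouville-type lower bound $|\eta| \ge c_1$. Analytically, the function $G(z)\big/\prod_{s=1}^{m}(z-s)^{t_0}$ is entire, and applying the maximum-modulus principle on a large circle $|z| = R$ --- using the order-$1$ growth $\max_{|z|=R}|G(z)| \le (\text{coefficient bound})\,e^{cR}$ together with the large denominator $R^{t_0 m}$ supplied by the known zeros --- produces an upper bound $|\eta| \le c_2$ that is exponentially small in the parameters. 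With $N, M, R$ tuned appropriately, $c_2 < c_1$, so the two bounds are incompatible unless $m$ is small; quantitatively this forces a bound of the form $m \le C\,d$. Since the integers furnish arbitrarily many common algebraic points, we may take $m$ larger than $C d$ and reach a contradiction, proving that $\gamma = \alpha^{\beta}$ is transcendental.

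The main obstacle is exactly this final balancing. One must simultaneously control three quantities --- the coefficient sizes produced by Siegel's lemma, the denominator and house of the algebraic number $\eta = G^{(t_0)}(s_0)/\ell^{t_0}$, and the sup-norm of $G$ on expanding disks --- and then choose the growth of $N$, $M$ and $R$ relative to $m$ so that the analytic ceiling provably drops below the arithmetic floor. Each individual estimate is standard, but coordinating them with the correct margins, and in particular ensuring that the removable factor $\ell^{t_0}$ never disturbs the arithmetic (it is the one transcendental ingredient, and must be divided out before the conjugate--product argument), is where the real difficulty and the characteristic subtlety of transcendence proofs lie.
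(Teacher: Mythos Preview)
The paper does not prove this statement at all: it is quoted as the Gelfond--Schneider theorem with a citation to Lang, and is used as a black box to deduce Corollary~\ref{0115-1}. So there is no ``paper's own proof'' to compare against; you have gone far beyond what the paper does by sketching the classical auxiliary-function argument.

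Your outline is essentially the standard Gelfond proof and is sound in its overall architecture (Siegel's lemma, Liouville-type lower bound versus maximum-modulus upper bound, parameter balancing). One technical slip: with your choice $f_1(z)=e^{\ell z}$, $f_2(z)=e^{\beta\ell z}$ you write that $K[f_1,f_2]$ is stable under $d/dz$, but since $\ell=\log\alpha$ is transcendental this is false as stated; the derivative $f_1'=\ell f_1$ leaves $K[f_1,f_2]$. You clearly see the issue, since you later divide by $\ell^{t}$ and flag $\ell$ as ``the one transcendental ingredient,'' so this is a wording problem rather than a conceptual one. The cleaner formulation, which makes the Schneider--Lang hypotheses hold on the nose, is to take $f_1(z)=e^{z}$ and $f_2(z)=e^{\beta z}$ (so $f_1'=f_1$, $f_2'=\beta f_2$ with $\beta\in K$) and evaluate at the points $z=\ell,2\ell,3\ell,\dots$, where $f_1(n\ell)=\alpha^{n}$ and $f_2(n\ell)=\gamma^{n}$ lie in $K$. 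Either packaging leads to the same contradiction; your version just carries the nuisance factor $\ell^{t}$ through the estimates, exactly as you describe.
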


\begin{cor} \label{0115-1}
{\it
Let $\alpha \in (0, 1]$.
If $\alpha \in \bar{\MB{Q}} \setminus \MB{Q}$,
then $\cos (\alpha \pi)$ is not an algebraic number. 
In particular,
an angle $\theta =$ $\alpha \pi$ for $\alpha \in \bar{\MB{Q}} \setminus \MB{Q}$ has the bipartite detection property.
}
\end{cor}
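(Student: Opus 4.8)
The plan is to argue by contradiction, reducing the statement to the Gelfond--Schneider theorem (Theorem~\ref{0126-2}) through Lemma~\ref{1231-1}, in close analogy with the proof of Corollary~\ref{0926-4}. First I would suppose that $\cos(\alpha\pi)$ is an algebraic number. Since $\alpha \in (0,1]$, the angle $\alpha\pi$ lies in $(0,\pi]$, so Lemma~\ref{1231-1} applies and shows that $e^{i\alpha\pi}$ is algebraic as well.

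Next I would exhibit $e^{i\alpha\pi}$ as a power of $-1$. Taking $i\pi$ as a determination of $\log(-1)$, we have $e^{i\alpha\pi} = e^{\alpha\log(-1)} = (-1)^{\alpha}$. The base $-1$ is an algebraic number that is neither $0$ nor $1$, and by hypothesis the exponent $\alpha$ is algebraic and irrational (in particular $\alpha \neq 0,1$). Hence Theorem~\ref{0126-2} asserts that $(-1)^{\alpha}$ is not algebraic, contradicting the conclusion of the previous paragraph. Therefore $\cos(\alpha\pi)$ cannot be an algebraic number, and Theorem~\ref{1224-1} then immediately yields that $\theta = \alpha\pi$ has the bipartite detection property.

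The one point requiring care is the branch issue in applying Theorem~\ref{0126-2}: the symbol $(-1)^{\alpha}$ is multivalued, so one should invoke the form of the Gelfond--Schneider theorem guaranteeing transcendence of $e^{\beta\log\alpha}$ for every fixed choice of the logarithm, applied here with the particular value $\log(-1) = i\pi$; with that reading the argument goes through without change. Everything else --- that $\alpha \neq 0$, that $\alpha \neq 1$, and that $\alpha\pi \in (0,\pi]$ so that Lemma~\ref{1231-1} is applicable --- is immediate from $\alpha \in (0,1] \cap (\bar{\MB{Q}} \setminus \MB{Q})$.
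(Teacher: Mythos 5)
Your argument is correct and is essentially the same as the paper's proof: assume $\cos(\alpha\pi)$ algebraic, use Lemma~\ref{1231-1} to get that $e^{i\alpha\pi}=(e^{i\pi})^{\alpha}=(-1)^{\alpha}$ is algebraic, contradict the Gelfond--Schneider theorem (Theorem~\ref{0126-2}), and conclude via Theorem~\ref{1224-1}. Your remark about fixing the branch $\log(-1)=i\pi$ is a careful touch the paper leaves implicit, but it does not change the argument.
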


\begin{proof}
We suppose that $\cos (\alpha \pi)$ is an algebraic number.
From Lemma~\ref{1231-1}, it follows that $e^{\alpha \pi i}$ is an algebraic number, whereas by Gelfond--Schneider theorem (Theorem~\ref{0126-2}),
$e^{\alpha \pi i} = (e^{\pi i})^{\alpha} = (-1)^{\alpha}$ is not an algebraic number.
This is a contradiction.
Therefore, $\cos (\alpha \pi)$ is not an algebraic number.
Furthermore, Theorem~\ref{1224-1} derives that an angle $\theta =$ $\alpha \pi$ for $\alpha \in \bar{\MB{Q}} \setminus \MB{Q}$ has the bipartite detection property.
\end{proof}

The third theorem we use is Baker's theorem.

\begin{thm}[Theorem~2.3 in \cite{B}] \label{1231-2}
{\it
$e^{\beta_0}\alpha_1^{\beta_1} \cdots \alpha_n^{\beta_n}$ is not an algebraic number for any non-zero algebraic numbers $\alpha_1, \dots, \alpha_n, \beta_1, \dots, \beta_n$.
}
\end{thm}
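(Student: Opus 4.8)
The plan is to establish this by the Gelfond--Baker method of auxiliary functions, which is what underlies Baker's theorem on linear forms in logarithms; in the paper itself one of course simply invokes \cite{B}, but the strategy is as follows. First I would reduce to a vanishing statement. Put $\gamma := e^{\beta_0}\alpha_1^{\beta_1}\cdots\alpha_n^{\beta_n}$ (here $\beta_0$ must also be taken nonzero, since e.g. $2^1$ is algebraic), and suppose for contradiction $\gamma \in \bar{\MB{Q}}$; it is manifestly nonzero. Fixing branches of the logarithms so that $L := \beta_0 + \beta_1\log\alpha_1 + \cdots + \beta_n\log\alpha_n$ satisfies $e^{L} = \gamma$, and absorbing the unavoidable integer multiple of $2\pi i$ into the branch of $\log\gamma$, we obtain $1\cdot\log\gamma + (-\beta_1)\log\alpha_1 + \cdots + (-\beta_n)\log\alpha_n = \beta_0$: a nontrivial $\bar{\MB{Q}}$-linear combination of logarithms of algebraic numbers equal to the nonzero algebraic number $\beta_0$. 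After a routine reduction removing any $\MB{Q}$-dependencies among $\log\gamma,\log\alpha_1,\dots,\log\alpha_n$, it suffices to prove Baker's homogeneous theorem: if $\log\alpha_1,\dots,\log\alpha_m$ are linearly independent over $\MB{Q}$, then $1,\log\alpha_1,\dots,\log\alpha_m$ are linearly independent over $\bar{\MB{Q}}$.

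To prove the latter, I would assume a relation $\beta_0 + \beta_1\log\alpha_1 + \cdots + \beta_n\log\alpha_n = 0$ with $\beta_n \neq 0$, solve it for $\log\alpha_n$, and construct an auxiliary exponential polynomial $\Phi$ in $n-1$ complex variables whose monomials are products of powers $\alpha_k^{z_k}$ twisted by the ratios $\beta_k/\beta_n$, arranged so that $\Phi$ and all its partial derivatives take, at integer points, values in a fixed number field with controlled degree and height. Siegel's lemma --- pigeonhole on an underdetermined linear system over $\MB{Z}$ --- then supplies integer coefficients for $\Phi$, not all zero and of moderate size, forcing $\Phi$ together with all partials up to a large order $M_0$ to vanish at every integer point of a box of side $S_0$.

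The heart of the argument, and the step I expect to be the genuine obstacle, is the extrapolation: combining the maximum-modulus principle (a Schwarz-type estimate that exploits the smallness of the exponent set of $\Phi$) with Liouville's inequality (a nonzero algebraic number of bounded degree and height cannot be too small) shows that vanishing to order $M_k$ on a box of side $S_k$ propagates to vanishing to a somewhat smaller order $M_{k+1}$ on a much larger box of side $S_{k+1}$; iterating this trade-off a carefully chosen number of times makes $\Phi$ and enough of its derivatives vanish at so many points that a generalized Vandermonde determinant in the distinct exponents is forced to vanish, contradicting the assumed $\MB{Q}$-independence of $\log\alpha_1,\dots,\log\alpha_n$. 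The whole difficulty is the simultaneous balancing of all the parameters --- the number of variables, the number of extrapolation steps, the orders $M_k$, the side lengths $S_k$, and the degree and height bounds on the coefficients --- so that the final numerical inequality comes out strict; everything else is lengthy but routine estimation. With Theorem~\ref{1231-2} in hand one then obtains the intended corollary, that $\theta = \alpha\pi + \beta$ for nonzero algebraic $\alpha,\beta$ (with $\theta \in (0,\pi]$) has the bipartite detection property, by writing $e^{i(\alpha\pi+\beta)} = e^{i\beta}(-1)^{\alpha}$, which Theorem~\ref{1231-2} shows is not algebraic, so that $\cos(\alpha\pi+\beta)\notin\bar{\MB{Q}}$ by Lemma~\ref{1231-1} and the conclusion follows from Theorem~\ref{1224-1}.
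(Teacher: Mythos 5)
The paper offers no proof of this statement to compare against: it is imported verbatim (up to a slip in the hypotheses --- $\beta_0$ must also be assumed to be a non-zero algebraic number, as you correctly point out) as Theorem~2.3 of Baker's monograph \cite{B} and used as a black box. So the question is whether your proposal would itself constitute a proof, and it would not. The part you do carry out --- the reduction to Baker's theorem on linear forms in logarithms (Theorem~2.1 in \cite{B}: if $\log\alpha_1,\dots,\log\alpha_m$ are linearly independent over $\MB{Q}$, then $1,\log\alpha_1,\dots,\log\alpha_m$ are linearly independent over $\bar{\MB{Q}}$; note this is the inhomogeneous statement, so ``homogeneous'' is a misnomer) --- is correct and genuinely routine: take logarithms, absorb the $2\pi i$ ambiguity into $\log\gamma$, pass to a $\MB{Q}$-basis of the span of $\log\gamma,\log\alpha_1,\dots,\log\alpha_n$ chosen from among these logarithms themselves, and observe that the coefficient $\beta_0$ of $1$ survives because the eliminated relations are homogeneous in the logarithms (the degenerate case where all logarithms vanish yields $\beta_0=0$ outright, a contradiction).

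The gap is everything after that. For Theorem~2.1 you give only a description of the shape of Baker's argument --- an auxiliary exponential polynomial in $n-1$ variables, Siegel's lemma, extrapolation via the maximum-modulus principle combined with Liouville's inequality, and a terminal Vandermonde-type nonvanishing --- while explicitly deferring the decisive content: you yourself label the extrapolation step and the simultaneous balancing of the orders $M_k$, the box sizes $S_k$, and the degree and height bounds as ``the genuine obstacle'' and ``lengthy but routine estimation.'' That balancing is not a verification one can wave at; it is the theorem, and without the explicit auxiliary function, the parameter choices, and the chain of inequalities that makes the extrapolation induction close, nothing has been established beyond what the citation already provides. The sensible course here is the one the paper takes, namely to quote \cite{B}; your closing paragraph, which rederives Corollary~\ref{0115-2} from the theorem via Lemma~\ref{1231-1} and Theorem~\ref{1224-1}, does match the paper, but that corollary is not the statement under review.
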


\begin{cor} \label{0115-2}
{\it
Let $\alpha, \beta$ be real numbers.
If $\alpha, \beta \in \MB{\bar{Q}} \setminus \{0\}$,
then $\cos(\alpha \pi + \beta)$ is not an algebraic number.
In particular,
an angle $\theta = \alpha \pi + \beta \in (0, \pi]$ for $\alpha, \beta \in \MB{\bar{Q}} \setminus \{0\}$ has the bipartite detection property.
}
\end{cor}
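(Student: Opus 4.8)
The plan is to imitate the proofs of Corollaries~\ref{0926-4} and~\ref{0115-1}: reduce the assertion ``$\cos(\alpha\pi+\beta)\notin\bar{\MB{Q}}$'' to the transcendence of a suitable exponential--power expression, and then invoke Baker's theorem (Theorem~\ref{1231-2}) in the role previously played by Lindemann--Weierstrass and Gelfond--Schneider. Once $\cos(\alpha\pi+\beta)$ is known not to be algebraic, the ``in particular'' clause is immediate from Theorem~\ref{1224-1}.

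Concretely, I would argue by contradiction. Assume $\cos(\alpha\pi+\beta)\in\bar{\MB{Q}}$. The argument of Lemma~\ref{1231-1} (which only uses that a square root of an algebraic number is again algebraic, and hence applies to any real argument, not merely to angles in $(0,\pi]$, since $g(\sin\theta)=f(\sin^2\theta)=f(1-\cos^2\theta)=0$ regardless of the sign of $\sin\theta$) then shows that $\sin(\alpha\pi+\beta)$ is algebraic, so
\[
e^{i(\alpha\pi+\beta)}=\cos(\alpha\pi+\beta)+i\sin(\alpha\pi+\beta)\in\bar{\MB{Q}}.
\]
Next I would put this number in the shape required by Theorem~\ref{1231-2}. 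Using the branch convention $e^{\pi i}=-1$ already adopted in the proof of Corollary~\ref{0115-1},
\[
e^{i(\alpha\pi+\beta)}=e^{i\beta}\cdot\left(e^{\pi i}\right)^{\alpha}=e^{i\beta}\,(-1)^{\alpha}.
\]
Now take $\beta_0=i\beta$, $n=1$, $\alpha_1=-1$ and $\beta_1=\alpha$. Since $\alpha,\beta\in\bar{\MB{Q}}\setminus\{0\}$ and $i\in\bar{\MB{Q}}$, the numbers $\beta_0$, $\alpha_1$, $\beta_1$ are all non-zero algebraic numbers, so Theorem~\ref{1231-2} asserts that $e^{\beta_0}\alpha_1^{\beta_1}=e^{i\beta}(-1)^{\alpha}$ is \emph{not} algebraic, contradicting the displayed membership. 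Hence $\cos(\alpha\pi+\beta)\notin\bar{\MB{Q}}$, and if moreover $\theta=\alpha\pi+\beta\in(0,\pi]$, Theorem~\ref{1224-1} gives that $\theta$ has the bipartite detection property.

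I do not anticipate a serious obstacle; the only points needing care are (a) verifying that each of $\beta_0=i\beta$, $\alpha_1=-1$, $\beta_1=\alpha$ really satisfies the non-zero-algebraic hypothesis of Theorem~\ref{1231-2} --- this is where the assumption $\beta\neq0$ (not merely $\beta\in\bar{\MB{Q}}$) is actually used --- and (b) being consistent about the branch of $(-1)^{\alpha}$, namely $(-1)^{\alpha}=e^{\pi i\alpha}$, in agreement with the convention used in Corollary~\ref{0115-1}. It is also worth remarking that Corollaries~\ref{0926-4} and~\ref{0115-1} are the degenerate cases $\alpha=0$ and $\beta=0$ of this statement, so Theorem~\ref{1231-2} is precisely the common strengthening of Lindemann--Weierstrass and Gelfond--Schneider that unifies them.
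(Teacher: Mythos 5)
Your proposal is correct and follows essentially the same route as the paper: assume $\cos(\alpha\pi+\beta)$ is algebraic, use Lemma~\ref{1231-1} to get $e^{i(\alpha\pi+\beta)}\in\bar{\MB{Q}}$, and contradict Baker's theorem (Theorem~\ref{1231-2}) with exactly the same choice $n=1$, $\beta_0=i\beta$, $\alpha_1=-1$, $\beta_1=\alpha$, then finish via Theorem~\ref{1224-1}. Your remark that the argument of Lemma~\ref{1231-1} works for any real argument (not just $\theta\in(0,\pi]$) is a sensible extra precaution that the paper glosses over.
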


\begin{proof}
We suppose that $\cos (\alpha \pi + \beta)$ is an algebraic number.
From Lemma~\ref{1231-1}, it follows that $e^{(\alpha \pi + \beta) i}$ is an algebraic number, whereas by Theorem~\ref{1231-2} with $n=1$, $\beta_0 = i\beta$, $\alpha_1 = -1$, and $\beta_1 = \alpha$,
we have $e^{(\alpha \pi + \beta) i} = e^{i\beta}(e^{\pi i})^{\alpha} = e^{i\beta}(-1)^{\alpha}$ is not an algebraic number.
This is a contradiction.
Therefore, $\cos (\alpha \pi + \beta)$ is not an algebraic number.
Furthermore, Theorem~\ref{1224-1} derives that
an angle $\theta = \alpha \pi + \beta \in (0, \pi]$ for $\alpha, \beta \in \MB{\bar{Q}} \setminus \{0\}$ has the bipartite detection property.
\end{proof}

Putting together Corollary~\ref{0926-4},
Corollary~\ref{0115-1}, and Corollary~\ref{0115-2}
yields our first main theorem, Theorem~\ref{main_BDP} in Section~\ref{Intro}.

\section{Familiar angles} \label{1117-1}
In contrast to the previous section,
the bipartite detection property breaks down for angles familiar to us,
namely $\theta \in \MB{Q}\pi$.
To show this,
we would like to construct a counterexample
to the converse of Lemma~\ref{0926-1} for each $\theta \in \MB{Q}\pi$.
The following lemma shows that a proper mixed graph with the $\theta$-symmetric spectrum can be constructed from an oriented graph with the $\theta$-symmetric spectrum:

\begin{lem} \label{1203-2}
{\it
Let $\theta \in (0, \pi]$.
If a non-bipartite oriented graph has the $\theta$-symmetric spectrum,
then there exists a non-bipartite proper mixed graph with the $\theta$-symmetric spectrum.
}
\end{lem}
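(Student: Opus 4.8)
The plan is to produce the required proper mixed graph by a disjoint union, simply grafting a digon onto the given oriented graph. Let $G$ be a non-bipartite oriented graph with the $\theta$-symmetric spectrum, and let $D$ be the mixed graph with vertex set $\{u,v\}$ and arc set $\{(u,v),(v,u)\}$, i.e. a single digon; its underlying graph is $K_2$ and $H_\theta(D) = \MM{0}{1}{1}{0}$, so $\Spec(D;\theta) = \{1,-1\}$. Set $G' = G \sqcup D$, the disjoint union. Since $D$ contains a digon, $G'$ is proper; and since $\Gamma(G')$ has $\Gamma(G)$ as a connected component and $\Gamma(G)$ is non-bipartite, $\Gamma(G')$ is non-bipartite, hence $G'$ is non-bipartite. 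So it remains only to verify that $G'$ has the $\theta$-symmetric spectrum.

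For this, order the vertices of $G'$ so that those of $G$ come first; then $H_\theta(G')$ is block diagonal with blocks $H_\theta(G)$ and $H_\theta(D)$, whence $\Spec(G';\theta) = \Spec(G;\theta)\cup\Spec(D;\theta)$ as multisets (and $\det(xI - H_\theta(G')) = \det(xI - H_\theta(G))\cdot\det(xI - H_\theta(D))$). Now $\Spec(D;\theta) = \{1,-1\}$ is symmetric about the origin — this is also a special case of Lemma~\ref{0926-1}, as $D$ is bipartite — and $\Spec(G;\theta)$ is symmetric about the origin by hypothesis. The union of two multisets each symmetric about the origin (with multiplicities) is again symmetric about the origin, so $\Spec(G';\theta)$ is symmetric, i.e. $G'$ has the $\theta$-symmetric spectrum. (Equivalently, via Lemma~\ref{0926-3}: the product of two polynomials each of the form $x^{m}q(x^{2})$ is again of that form.) Thus $G'$ is a non-bipartite proper mixed graph with the $\theta$-symmetric spectrum, which proves the lemma.

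There is no serious obstacle here; the only points I would make sure to record in the final write-up are that the definition of a mixed graph permits disconnected graphs, so that the disjoint union $G \sqcup D$ is a legitimate mixed graph, and the elementary block-diagonal fact that makes the characteristic polynomial (equivalently the spectrum) factor. If one wanted a connected witness, the disjoint union would no longer suffice and one would instead have to splice $D$ onto $G$ through an auxiliary bipartite gadget engineered so as not to disturb the vanishing of the odd-index coefficients supplied by Lemma~\ref{0926-3}; that refinement is genuinely more delicate, but it is not needed for the statement as given.
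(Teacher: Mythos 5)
Your proof is correct, but your construction differs from the paper's. You adjoin a disjoint digon $D$ to $G$, so that $H_\theta(G\sqcup D)$ is block diagonal and $\Spec(G\sqcup D;\theta)=\Spec(G;\theta)\cup\{\pm1\}$, which is symmetric as a union of two symmetric multisets; properness comes from the digon and non-bipartiteness from the component $\Gamma(G)$. All of this is sound, since the paper's definition of a mixed graph imposes no connectivity. The paper instead takes two copies of $G$ and joins each vertex to its twin by a digon, i.e.\ it forms the matrix $\MM{H_{\theta}(G)}{I_n}{I_n}{H_{\theta}(G)}$, whose spectrum is $\{\lambda\pm1\mid\lambda\in\Spec(G;\theta)\}$; symmetry of $\Spec(G;\theta)$ again gives symmetry, properness comes from the $n$ digons, and non-bipartiteness from the subgraph $G$. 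The trade-off is exactly the one you flag at the end: the paper's gadget preserves connectivity (if $G$ is connected, so is $\tilde G$, since every vertex is tied to its twin), whereas your witness is unavoidably disconnected; on the other hand your construction is smaller ($|V(G)|+2$ versus $2|V(G)|$ vertices) and its spectral bookkeeping is even more transparent. For the lemma as stated, which does not demand connectivity, either route suffices.
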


\begin{proof}
Let $G$ be a non-bipartite oriented graph with the $\theta$-symmetric spectrum,
and let $n$ be the number of vertices of $G$.
The matrix
\[
\begin{bmatrix}
H_{\theta}(G) & I_n \\
I_n & H_{\theta}(G)
\end{bmatrix}
\]
defines a proper mixed graph $\tilde{G}$.
As can be seen immediately,
$\Spec(\tilde{G}; \theta) = \{ \lambda \pm 1 \mid \lambda \in \Spec(G; \theta)\}$.
Since $G$ has the $\theta$-symmetric spectrum,
the mixed graph $\tilde{G}$ also has the $\theta$-symmetric spectrum.
We note that $\tilde{G}$ has $G$ as a subgraph.
Since $G$ is non-bipartite,
the mixed graph $\tilde{G}$ is also non-bipartite.
\end{proof}

Thus, we wish to construct non-bipartite oriented graphs with the $\theta$-symmetric spectrum for given $\theta \in \MB{Q}\pi$.
\color{black}

Let $m$ be a positive integer, and let $a \in \{1, \dots, m-1 \}$.
Define the oriented path $P_m^{(a, m-1-a)} = (V, \MC{A})$ with $m$ vertices by
$V = \{ p_1, p_2, \dots, p_m \}$ and
\[ \MC{A} = \{ (p_1, p_2), \dots, (p_a, p_{a+1}), (p_{a+2}, p_{a+1}), (p_{a+3}, p_{a+2}), \cdots, (p_m, p_{m-1}) \}. \]
We have illustrated $P_6^{(3,2)}$ in Figure~\ref{0930-2} as an example.

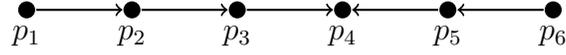
\begin{figure}[ht]
\begin{center}
\begin{tikzpicture}
[scale = 0.7,
line width = 0.8pt,
v/.style = {circle, fill = black, inner sep = 0.8mm},u/.style = {circle, fill = white, inner sep = 0.1mm}]
  \node[u] (L1) at (0, -0.5) {$p_1$};
  \node[u] (L1) at (2, -0.5) {$p_2$};
  \node[u] (L1) at (4, -0.5) {$p_3$};
  \node[u] (L1) at (6, -0.5) {$p_4$};
  \node[u] (L1) at (8, -0.5) {$p_5$};
  \node[u] (L1) at (10, -0.5) {$p_6$};
  \node[v] (1) at (0, 0) {};
  \node[v] (2) at (2, 0) {};
  \node[v] (3) at (4, 0) {};
  \node[v] (4) at (6, 0) {};
  \node[v] (5) at (8, 0) {};
  \node[v] (6) at (10, 0) {};  
  \draw[->] (1) to (2);
  \draw[->] (2) to (3); 
  \draw[->] (3) to (4);
  \draw[->] (5) to (4);
  \draw[->] (6) to (5);
\end{tikzpicture}
\caption{The mixed path $P_6^{(3,2)}$} \label{0930-2}
\end{center}
\end{figure}

Next, we construct the oriented graph $G_m$, which is key to this section.

\begin{enumerate}[Step 1.]
\item Take the join of the undirected path 
$P_{2}= (\{x, y\}, \{\{x,y\}\})$ with $2$ vertices and
the empty graph $\overline{K_m} = (\{z_1, \dots, z_m\}, \emptyset )$.
Here, we refer to \cite{BH} for the {\it join} of graphs $\G$ and $\Delta$.
An example of this step for $m=4$ is shown in Figure~\ref{0930-s1}.
\item Assign the orientation from the vertex $x$ to the vertex $y$.
\item Replace the edge $\{ x, z_j \}$ with $P_m^{(0, m-1)}$
so that $p_1 = x$ and $p_m = z_j$ for each $j$.
The process up to this step for $m=4$ is shown in Figure~\ref{1001-s3}.
\item Finally, replace the edge $\{ y, z_j \}$ with $P_m^{(j-1, m-j)}$
so that $p_1 = y$ and $p_m = z_j$ for each $j$.
Let $G_m$ be the resulting oriented graph.
As an example, we show the oriented graph $G_4$ in Figure~\ref{1001-s4}.
\end{enumerate}

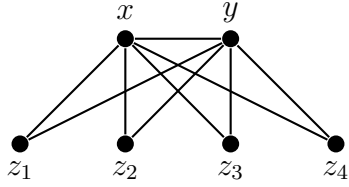
\begin{figure}[ht]
\begin{center}
\begin{tikzpicture}
[scale = 0.7,
line width = 0.8pt,
v/.style = {circle, fill = black, inner sep = 0.8mm},u/.style = {circle, fill = white, inner sep = 0.1mm}]
  \node[u] (Lx) at (-1, 0.5) {$x$};
  \node[u] (Ly) at (1, 0.5) {$y$};
  \node[u] (Lz1) at (-3, -2.5) {$z_1$};
  \node[u] (Lz2) at (-1, -2.5) {$z_2$};
  \node[u] (Lz3) at (1, -2.5) {$z_3$};
  \node[u] (Lz4) at (3, -2.5) {$z_4$};
  \node[v] (x) at (-1, 0) {};
  \node[v] (y) at (1, 0) {};
  \node[v] (z1) at (-3, -2) {};
  \node[v] (z2) at (-1, -2) {};
  \node[v] (z3) at (1, -2) {};
  \node[v] (z4) at (3, -2) {};  
  \draw[-] (x) to (y);
  \draw[-] (x) to (z1);
  \draw[-] (x) to (z2);
  \draw[-] (x) to (z3);
  \draw[-] (x) to (z4);
  \draw[-] (y) to (z1);
  \draw[-] (y) to (z2);
  \draw[-] (y) to (z3);
  \draw[-] (y) to (z4);
\end{tikzpicture}
\caption{The join of $P_2$ and $\overline{K_4}$} \label{0930-s1}
\end{center}
\end{figure}

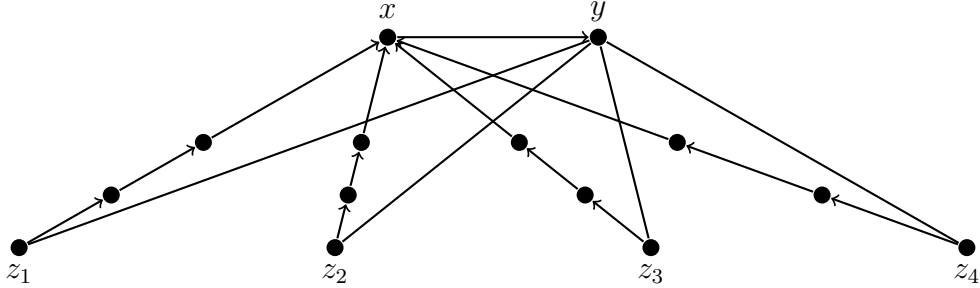
\begin{figure}[ht]
\begin{center}
\begin{tikzpicture}
[scale = 0.7,
line width = 0.8pt,
v/.style = {circle, fill = black, inner sep = 0.8mm},u/.style = {circle, fill = white, inner sep = 0.1mm}]
\node[u] (Lx) at (-2, 0.5) {$x$};
\node[u] (Ly) at (2, 0.5) {$y$};
\node[u] (Lz1) at (-9, -4.5) {$z_1$};
\node[u] (Lz2) at (-3, -4.5) {$z_2$};
\node[u] (Lz3) at (3, -4.5) {$z_3$};
\node[u] (Lz4) at (9, -4.5) {$z_4$};
\node[v] (x) at (-2, 0) {};
\node[v] (y) at (2, 0) {};
\node[v] (z1) at (-9, -4) {};
\node[v] (z1x) at (-5.5, -2) {};
\node[v] (z1x2) at (-7.25, -3) {};
%
\node[v] (z2) at (-3, -4) {};
\node[v] (z2x) at (-2.5, -2) {};
\node[v] (z2x2) at (-2.75, -3) {};
%
\node[v] (z3) at (3, -4) {};
\node[v] (z3x) at (0.5, -2) {};
\node[v] (z3x2) at (1.75, -3) {};
%
\node[v] (z4) at (9, -4) {};  
\node[v] (z4x) at (3.5, -2) {};
\node[v] (z4x2) at (6.25, -3) {};
%
\draw[->] (x) to (y);
\draw[->] (z1) to (z1x2);
\draw[->] (z1x2) to (z1x);
\draw[->] (z1x) to (x);
\draw[-] (y) to (z1);
\draw[->] (z2) to (z2x2);
\draw[->] (z2x2) to (z2x);
\draw[->] (z2x) to (x);
\draw[-] (y) to (z2);
\draw[->] (z3) to (z3x2);
\draw[->] (z3x2) to (z3x);
\draw[->] (z3x) to (x);
\draw[-] (y) to (z3);
\draw[->] (z4) to (z4x2);
\draw[->] (z4x2) to (z4x);
\draw[->] (z4x) to (x);
\draw[-] (y) to (z4);
\end{tikzpicture}
\caption{The mixed graph up to Step~3} \label{1001-s3}
\end{center}
\end{figure}

\begin{figure}[ht]
\begin{center}
\begin{tikzpicture}
[scale = 0.7,
line width = 0.8pt,
v/.style = {circle, fill = black, inner sep = 0.8mm},u/.style = {circle, fill = white, inner sep = 0.1mm}]
\node[u] (Lx) at (-2, 0.5) {$x$};
\node[u] (Ly) at (2, 0.5) {$y$};
\node[u] (Lz1) at (-9, -4.5) {$z_1$};
\node[u] (Lz2) at (-3, -4.5) {$z_2$};
\node[u] (Lz3) at (3, -4.5) {$z_3$};
\node[u] (Lz4) at (9, -4.5) {$z_4$};
\node[v] (x) at (-2, 0) {};
\node[v] (y) at (2, 0) {};
\node[v] (z1) at (-9, -4) {};
\node[v] (z1x) at (-5.5, -2) {};
\node[v] (z1x2) at (-7.25, -3) {};
\node[v] (z1y) at (-3.5, -2) {};
\node[v] (z1y2) at (-6.25, -3) {};
\node[v] (z2) at (-3, -4) {};
\node[v] (z2x) at (-2.5, -2) {};
\node[v] (z2x2) at (-2.75, -3) {};
\node[v] (z2y) at (-0.5, -2) {};
\node[v] (z2y2) at (-1.75, -3) {};
\node[v] (z3) at (3, -4) {};
\node[v] (z3x) at (0.5, -2) {};
\node[v] (z3x2) at (1.75, -3) {};
\node[v] (z3y) at (2.5, -2) {};
\node[v] (z3y2) at (2.75, -3) {};
\node[v] (z4) at (9, -4) {};  
\node[v] (z4x) at (3.5, -2) {};
\node[v] (z4x2) at (6.25, -3) {};
\node[v] (z4y) at (5.5, -2) {};
\node[v] (z4y2) at (7.25, -3) {};
\draw[->] (x) to (y);
\draw[->] (z1) to (z1x2);
\draw[->] (z1x2) to (z1x);
\draw[->] (z1x) to (x);
\draw[->] (z1) to (z1y2);
\draw[->] (z1y2) to (z1y);
\draw[->] (z1y) to (y);
\draw[->] (z2) to (z2x2);
\draw[->] (z2x2) to (z2x);
\draw[->] (z2x) to (x);
\draw[->] (z2) to (z2y2);
\draw[->] (z2y2) to (z2y);
\draw[<-, blue] (z2y) to (y);
\draw[->] (z3) to (z3x2);
\draw[->] (z3x2) to (z3x);
\draw[->] (z3x) to (x);
\draw[->] (z3) to (z3y2);
\draw[<-, blue] (z3y2) to (z3y);
\draw[<-, blue] (z3y) to (y);
\draw[->] (z4) to (z4x2);
\draw[->] (z4x2) to (z4x);
\draw[->] (z4x) to (x);
\draw[<-, blue] (z4) to (z4y2);
\draw[<-, blue] (z4y2) to (z4y);
\draw[<-, blue] (z4y) to (y);
\end{tikzpicture}
\caption{The oriented graph $G_4$} \label{1001-s4}
\end{center}
\end{figure}
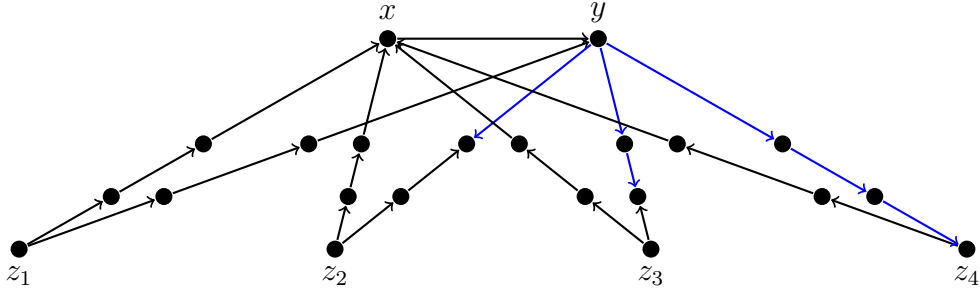

We have a few additional comments on the oriented graph $G_m$.
The underlying graph 
$\Gamma(G_{m})$ 
is a kind of book graph, that is, 
a graph obtained from $m$ copies 
of $C_{2m-1}$ by sharing a common 
edge $\{ x, y \}$. 
Moreover, in terms of magnetic flux 
in Section~\ref{MAM}, 
the flux of the cycle of length $2m-1$ passing through $z_{j}$ 
in $G_m$ is $\pm(2j-1)\theta$.
The number of vertices of $G_m$ is $2m^2-3m+2$.

There are two important properties of the oriented graph $G_m$.
The first is that the length of any odd cycle in $G_m$ is $2m-1$.
The second is that even if any odd cycle is removed from $G_m$,
the underlying graphs of the remaining oriented graphs are all isomorphic to each other.
These two properties lead to the following important lemma.

\begin{lem} \label{0930-1}
{\it
Let $\Phi(x) = \sum_{j = 0}^{2m^2 - 3m + 2} a_j x^{2m^2 - 3m + 2-j}$ be
the characteristic polynomial of $H_{\theta}(G_m)$ for $\theta \in (0, \pi]$.
Then,
\begin{enumerate}[(i)]
\item For any $k < m$, we have $a_{2k-1} = 0$.
\item If $a_{2m-1} = 0$, then we have $a_{2k-1} = 0$ for any $k > m$.
\end{enumerate}
}
\end{lem}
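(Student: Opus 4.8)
The plan is to determine the set $\MC{H}_{2k-1}(G_m)$ explicitly from the structure of $G_m$ and then feed it into the coefficient formula of Theorem~\ref{0923-4}. The key structural fact I would use is that \emph{every cycle of $\Gamma(G_m)$ contains both $x$ and $y$}. Indeed $\Gamma(G_m)$ is the book graph obtained by gluing $m$ copies of the cycle $C_{2m-1}$ along a common edge $\{x,y\}$; denote its $m$ pages (each a $(2m-1)$-cycle through $\{x,y\}$) by $C_1,\dots,C_m$. In each page the part other than the edge $\{x,y\}$ is an $x$--$y$ path all of whose internal vertices have degree $2$ in $\Gamma(G_m)$, so any cycle of $\Gamma(G_m)$ must traverse such paths in full, and is therefore either one page $C_j$ (length $2m-1$, odd) or a concatenation of two of these paths (length $4m-4$, even); in either case it passes through $x$ and $y$. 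Two consequences: (a) any two cycles of $\Gamma(G_m)$ meet, hence every elementary subgraph of $G_m$ has at most one cycle among its connected components; and (b) the odd cycles of $G_m$ are exactly $C_1,\dots,C_m$, each of length $2m-1$. (As recorded above $\re(C_j)=\cos((2j-1)\theta)$, but this exact value will not be needed.)

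Statement (i) is then immediate: an elementary subgraph on the odd number $2k-1$ of vertices must contain an odd number of odd-length cycle components, hence at least one; by (a) it then contains exactly one cycle, which is odd and, by (b), of length $2m-1$. If $k<m$ this is impossible, so $\MC{H}_{2k-1}(G_m)=\emptyset$ and $a_{2k-1}=0$ by Theorem~\ref{0923-4}.

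For (ii), fix $k\ge m$. By the same reasoning, each $H\in\MC{H}_{2k-1}(G_m)$ has exactly one cycle component, which is one of the $C_j$; removing it, and using (a) once more, the remaining components are all $K_2$'s, so $H=C_j\sqcup M$ where $M$ is a matching with $k-m$ edges in $\Gamma(G_m)\setminus V(C_j)$ (it has $(2k-1)-(2m-1)=2(k-m)$ vertices), and conversely every such pair $(C_j,M)$ gives an element of $\MC{H}_{2k-1}(G_m)$. Now I invoke the second property of $G_m$ stated above: the graphs $\Gamma(G_m)\setminus V(C_1),\dots,\Gamma(G_m)\setminus V(C_m)$ are mutually isomorphic, so the number $N_{k-m}$ of matchings with $k-m$ edges in any one of them does not depend on $j$. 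Applying (\ref{0921-1}) to $H=C_j\sqcup M$, where $p(H)=1+(k-m)$, $|\MC{C}(H)|=1$, and $\prod_{C\in\MC{C}(H)}\re(C)=\re(C_j)$, the contribution of $H$ is $(-1)^{1+(k-m)}\,2\,\re(C_j)$, independent of $M$. Summing over the $N_{k-m}$ choices of $M$ and then over $j$ gives
\[
a_{2k-1}=(-1)^{1+(k-m)}\,2\,N_{k-m}\sum_{j=1}^{m}\re(C_j)\qquad (k\ge m).
\]
Specializing to $k=m$ (so $N_0=1$, the empty matching) yields $a_{2m-1}=-2\sum_{j=1}^{m}\re(C_j)$, and substituting back produces the single relation
\[
a_{2k-1}=(-1)^{k-m}\,N_{k-m}\,a_{2m-1}\qquad (k\ge m).
\]
Hence $a_{2m-1}=0$ forces $a_{2k-1}=0$ for every $k>m$, which is (ii). (When $2k-1$ exceeds the number $2m^2-3m+2$ of vertices of $G_m$, both sides vanish, consistent with $N_{k-m}=0$.)

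The only place needing genuine care is the structural claim of the first paragraph — that every cycle of $\Gamma(G_m)$ runs through $x$ and $y$, so that an elementary subgraph carries at most one cycle, its non-cyclic part is a matching, and that matching lives in a graph whose isomorphism type is independent of which odd cycle was split off. Once that is in place, the rest is the bilinear cancellation above, in which the quantities $\re(C_j)$ factor out uniformly and so never have to be evaluated.
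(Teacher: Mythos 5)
Your proof is correct and follows essentially the same route as the paper: identify each $H\in\MC{H}_{2k-1}(G_m)$ as one page $C_j$ plus a matching of $k-m$ edges, use the fact that the complements $\G(G_m\setminus C_j)$ are mutually isomorphic to factor out the matching count, and conclude $a_{2k-1}=(-1)^{k-m}N_{k-m}\,a_{2m-1}$. Your explicit justification that every cycle of $\Gamma(G_m)$ passes through $x$ and $y$ (so no even cycle can accompany the odd one) is a welcome touch of rigor that the paper leaves implicit, but it is not a different argument.
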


\begin{proof}
(i) Let $k < m$.
An elementary subgraph with $2k-1$ vertices must have at least one odd cycle.
However, the length of the minimum odd cycles in $G_m$ is $2m-1$,
so $\MC{H}_{2k-1}(G_m) = \emptyset$.
This implies that $a_{2k-1} = 0$.

(ii) Let $k > m$.
An elementary subgraph with $2k -1$ vertices must have odd cycles of length $2m-1$,
while any odd cycle in $G_m$ has the arc $(x,y)$.
Thus, an elementary subgraph with $2k -1$ vertices consists of
precisely one odd cycle of length $2m-1$ and disjoint union of $k-m$ arcs.
Let $C_j$ be the odd cycle in $G_m$ of length $2m-1$ that contains the vertex $z_j$.
Then $\G(G_m \setminus C_j)$ is isomorphic to $(m-1)P_{2m-3}$ for any $j$,
where $(m-1)P_{2m-3}$ denotes the disjoint union of $m-1$ paths $P_{2m-3}$.
Then, let $M_k$ be the number of elementary subgraphs in $(m-1)P_{2m-3}$ with $2(k-m)$ vertices,
i.e., $M_k = |\MC{H}_{2(k-m)}((m-1)P_{2m-3})|$.
We would like to show that $a_{2k-1} = (-1)^{k-m}  M_k  a_{2m-1}$.
By Theorem~\ref{0923-4},
\begin{align*}
a_{2k-1} &= \sum_{H \in \MC{H}_{2k-1}(G_m)} (-1)^{p(H)}2^{|\MC{C(H)|}} \prod_{C \in \MC{C}(H)} \re(C) \\
&= \sum_{j = 1}^m \sum_{\substack{H \in \MC{H}_{2k-1}(G_m) \\ H = C_j \cup (G_m \setminus C_j)}} (-1)^{p(H)}2^{|\MC{C(H)|}} \prod_{C \in \MC{C}(H)} \re(C) \\
&= \sum_{j = 1}^m \sum_{\substack{H \in \MC{H}_{2k-1}(G_m) \\ H = C_j \cup (G_m \setminus C_j)}} (-1)^{k-m+1}2^{1} \re(C_j) \\
&= \sum_{j = 1}^m \sum_{H \in \MC{H}_{2(k-m)}(G_m \setminus C_j)} (-1)^{k-m+1}2^{1} \re(C_j) \\
&= (-1)^{k-m} \sum_{j = 1}^m (-1)^1 2^1 \re(C_j) \sum_{H \in \MC{H}_{2(k-m)}(G_m \setminus C_j)} 1 \\
&= (-1)^{k-m} \sum_{j = 1}^m (-1)^1 2^1 \re(C_j) \sum_{H \in \MC{H}_{2(k-m)}((m-1)P_{2m-3})} 1 \\
&= (-1)^{k-m} \sum_{j = 1}^m (-1)^1 2^1 \re(C_j) M_k \\
&= (-1)^{k-m}  M_k  \sum_{j = 1}^m (-1)^1 2^1 \re(C_j) \\
&= (-1)^{k-m}  M_k  a_{2m-1}.
\end{align*}
Therefore, we see that $a_{2k-1} = 0$ if $a_{2m-1} = 0$.
\end{proof}

Before proving our second main theorem,
we have two brief additions for the case $\theta = \pi$.
It can be seen substantially via sign-symmetric signed graphs that $\theta = \pi$ does not have the bipartite detection property.
Interested readers in them can refer to \cite{GHMM, S} for example.
Of course, it is also possible to construct an example to break down the bipartite detection property
without notion of sign-symmetric singed graphs,
such as the mixed graph shown in Figure~\ref{1201-1}.
The other addition is that no oriented graph can provide examples to break down the bipartite detection property when $\theta = \pi$.
This is because $\Spec(G; \pi) = -\Spec(\G(G); 0)$ holds for any oriented graph $G$
since $H_{\pi}(G) = -A(\G(G))$,
where $A(\G(G))$ is the adjacency matrix of $\G(G)$.
In this sense,
if mixed graphs under consideration are restricted to oriented graphs,
we could say that {\it $\theta = \pi$ has the bipartite detection property for oriented graphs}.

\begin{figure}[ht]
\begin{center}
\begin{tikzpicture}
[scale = 0.7,
line width = 0.8pt,
v/.style = {circle, fill = black, inner sep = 0.8mm},u/.style = {circle, fill = white, inner sep = 0.1mm}]
  \node[v] (1) at (0, 0) {};
  \node[v] (2) at (2, 0) {};
  \node[v] (3) at (2, 2) {};
  \node[v] (4) at (0, 2) {};
  \draw[->] (1) to (2);
  \draw[-] (2) to (3); 
  \draw[-] (3) to (4);
  \draw[-] (4) to (1);
  \draw[-] (1) to (3);
\end{tikzpicture}
\caption{Non-bipartite mixed graph that has the $\pi$-symmetric spectrum} \label{1201-1}
\end{center}
\end{figure}
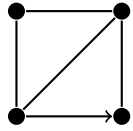

Now, we provide our second main theorem,
Theorem~\ref{main_notBDP} in Section~\ref{Intro}.

\begin{thm} \label{1119-2}
{\it
Let $\theta \in (0, \pi]$.
If $\theta \in \MB{Q}\pi$,
then $\theta$ does not have the bipartite detection property.
}
\end{thm}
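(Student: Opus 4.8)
The plan is to produce the required counterexamples among the oriented graphs $G_m$ constructed above. First I would dispose of the case $\theta=\pi$: as already remarked just before the theorem, $\pi$ fails the bipartite detection property — for instance the non-bipartite proper mixed graph in Figure~\ref{1201-1} has the $\pi$-symmetric spectrum (alternatively one invokes sign-symmetric signed graphs). So from now on assume $\theta\in\MB{Q}\pi$ with $\theta\in(0,\pi)$, and write $\theta=\frac{p}{q}\pi$ with integers $1\le p<q$; in particular $q\ge2$. I claim that $G_m$ with $m:=q$ works. Its underlying graph contains the odd cycle $C_{2m-1}$, so $G_m$ is non-bipartite; hence it suffices to show that $G_m$ has the $\theta$-symmetric spectrum.

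By Lemma~\ref{0926-3} this amounts to proving $a_{2l-1}=0$ for every $l$, where the $a_j$ are the coefficients of the characteristic polynomial of $H_\theta(G_m)$. Lemma~\ref{0930-1}(i) gives $a_{2l-1}=0$ for all $l<m$ (since $m\ge2$ this includes $a_1=0$), while Lemma~\ref{0930-1}(ii) gives $a_{2l-1}=0$ for all $l>m$ as soon as $a_{2m-1}=0$. Thus the whole statement reduces to the single identity $a_{2m-1}=0$, which is the crux of the argument.

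To evaluate $a_{2m-1}$, recall that every odd cycle of $G_m$ has length $2m-1$ and that these cycles are exactly $C_1,\dots,C_m$, where $C_j$ passes through $z_j$; hence $\MC{H}_{2m-1}(G_m)=\{C_1,\dots,C_m\}$, and Theorem~\ref{0923-4} gives
\[
a_{2m-1}=-2\sum_{j=1}^{m}\re(C_j).
\]
Following the arcs around $C_j$ — against the orientations along $P_m^{(0,m-1)}$ from $x$ to $z_j$, partly with and partly against the orientations along $P_m^{(j-1,m-j)}$ from $z_j$ back to $y$, and finally against the arc $x\to y$ — the product of the corresponding entries of $H_\theta$ telescopes to $e^{-i(2j-1)\theta}$; equivalently the magnetic flux of $C_j$ is $\pm(2j-1)\theta$, so $\re(C_j)=\cos\big((2j-1)\theta\big)$ and
\[
a_{2m-1}=-2\sum_{j=1}^{m}\cos\big((2j-1)\theta\big).
\]
Since $\sum_{j=1}^{m}e^{i(2j-1)\theta}=e^{i\theta}\dfrac{e^{2im\theta}-1}{e^{2i\theta}-1}$ and $2m\theta=2p\pi$, the numerator vanishes, while the denominator is nonzero because $0<2\theta<2\pi$; taking real parts gives $\sum_{j=1}^{m}\cos\big((2j-1)\theta\big)=0$, hence $a_{2m-1}=0$. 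By the reductions above, all odd-indexed coefficients of the characteristic polynomial vanish, so by Lemma~\ref{0926-3} the non-bipartite oriented graph $G_m$ has the $\theta$-symmetric spectrum, and $\theta$ does not have the bipartite detection property.

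I expect the one delicate point to be the flux computation $\re(C_j)=\cos((2j-1)\theta)$: one must keep careful track of which of the three segments of $C_j$ are traversed with, and which against, the arc orientations, and verify that the exponents $-(m-1)+(m-j)-(j-1)-1$ telescope to $1-2j$. This is exactly where the asymmetric choices $P_m^{(0,m-1)}$ in Step~3 and $P_m^{(j-1,m-j)}$ in Step~4 of the construction of $G_m$ enter. Once this is in hand, the rest is a short geometric-series computation combined with the already-established Lemmas~\ref{0926-3} and~\ref{0930-1}.
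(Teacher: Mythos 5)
Your proposal is correct and follows essentially the same route as the paper: dispose of $\theta=\pi$ via the graph in Figure~\ref{1201-1}, then for $\theta=\frac{l}{m}\pi\in(0,\pi)$ show the non-bipartite oriented graph $G_m$ has the $\theta$-symmetric spectrum by reducing, via Lemmas~\ref{0926-3} and~\ref{0930-1}, to the single identity $a_{2m-1}=-2\sum_{j=1}^m\cos((2j-1)\theta)=0$, proved by the same geometric-series computation (with the same observation $e^{2i\theta}\neq 1$). The only difference is that you spell out the flux computation $\re(C_j)=\cos((2j-1)\theta)$, which the paper simply asserts, and your accounting of the arc orientations is accurate.
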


\begin{proof}
Let $\theta = \frac{l}{m}\pi$.
The mixed graph in Figure~\ref{1201-1} shows that
$\theta = \pi$ does not have the bipartite detection property.
Then we can assume that $0 < \frac{l}{m} < 1$.
We show the non-bipartite oriented graph $G_m$ has the $\theta$-symmetric spectrum.
Under the same setting as Lemma~\ref{0930-1},
we wish to show that $a_{2m-1} = 0$.
For $j \in \{1, \dots, m\}$,
let $C_j$ be the odd cycle in $G_m$ of length $2m-1$ that contains the vertex $z_j$.
By Theorem~\ref{0923-4},
we have
\begin{align*}
a_{2m-1} &= \sum_{j = 1}^m (-1)^1 2^1 \re (C_j) \\
&= - \sum_{j = 1}^m 2\cos (2j-1)\theta \\
&= - \sum_{j = 1}^m (e^{(2j-1)i \theta} + e^{-(2j-1)i \theta}) \\
&= - \left( \frac{e^{i\theta}(e^{2mi\theta} - 1) }{e^{2i\theta} - 1}  + \frac{e^{-i\theta}(e^{-2mi\theta} - 1) }{e^{-2i\theta} - 1} \right) \\ 
&= - \left( \frac{e^{i\theta}(e^{2 l \pi i} - 1) }{e^{2i\theta} - 1}  + \frac{e^{-i\theta}(e^{-2 l \pi i} - 1) }{e^{-2i\theta} - 1} \right) \\
&= 0.
\end{align*}
Here we remark that $e^{2i\theta} \neq 1$.
Therefore, 
Lemma~\ref{0930-1} and Lemma~\ref{0926-3} derive that $G_m$ has the $\theta$-symmetric spectrum.
\end{proof}

\section{
Minimality of our oriented book graph} \label{1222-1}
In this section,
we discuss certain minimality of oriented graphs that break down the bipartite detection property.
Let us consider $\theta = \frac{\pi}{3}$ as an example.
According to Theorem~\ref{1119-2},
the oriented graph $G_3$ is an example that breaks down the bipartite detection property for this angle.
$G_3$ has $11$ vertices and its (longest) length of odd cycles is 5.
On the other hand,
Mohar \cite{M} constructed an oriented graph with 5 vertices and its longest length of odd cycles is 3 (See Figure~\ref{1124-1}).
In this way, our oriented graph $G_m$ is sufficient to investigate the bipartite detection property,
but there is still room for further investigation in terms of certain minimality.

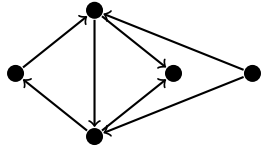
\begin{figure}[ht]
\begin{center}
\begin{tikzpicture}
[scale = 0.7,
line width = 0.8pt,
v/.style = {circle, fill = black, inner sep = 0.8mm},u/.style = {circle, fill = white, inner sep = 0.1mm}]
  \node[v] (1) at (0, 0) {};
  \node[v] (2) at (1.5, 1.2) {};
  \node[v] (3) at (1.5, -1.2) {};
  \node[v] (4) at (3, 0) {};
  \node[v] (5) at (4.5, 0) {};
  \draw[->] (1) to (2);
  \draw[->] (3) to (1);
  \draw[->] (2) to (3); 
  \draw[->] (2) to (4);
  \draw[->] (3) to (4);
  \draw[->] (5) to (2);
  \draw[->] (5) to (3);
\end{tikzpicture}
\caption{The non-bipartite oriented graph provided by Mohar \cite{M}
that has the $\frac{\pi}{3}$-symmetric spectrum} \label{1124-1}
\end{center}
\end{figure}

Let $\G$ be an undirected graph that is not a tree.
The {\it circumference} of $\G$ is the length of the longest cycles in $\G$,
and the {\it odd circumference} of $\G$ is the length of the longest odd cycles in $\G$.
These two terms are carried over for mixed graphs, that is,
the {\it circumference} of a mixed graph $G$ is the length of the longest cycles in $\G(G)$.
The same for {\it odd circumference} of $G$.
For example, the oriented graph shown in Figure~\ref{1124-1} has the circumference $4$,
while the odd circumference is $3$.
To conclude first,
we have succeeded in lowering the odd circumferences
depending on the denominator of the angle $\theta \in \MB{Q}\pi$,
as summarized in Table~\ref{1119-3} on page  \pageref{1119-3}.

To construct oriented graphs with the symmetric spectrum and smaller odd circumference, we first generalize the oriented graph $G_m$.

\begin{enumerate}[Step 1.]
\item Take the join of the undirected path $P_2 = (\{x, y\}, \{\{x,y\}\})$ with $2$ vertices and
the empty graph $\overline{K_{s_1+ \dots + s_t}} = (\{ z_1^{(1)}, \dots, z_1^{(s_1)}, z_2^{(1)}, \dots, z_2^{(s_2)}, \dots, z_t^{(1)}, \dots, z_t^{(s_t)} \}, \emptyset )$.
An example of this step for $(s_1, s_2, s_3) = (2,2,1)$ is shown in Figure~\ref{1125-s1}.
\item Assign the orientation from the vertex $x$ to the vertex $y$.
\item Replace the edge $\{ x, z_j^{(k)} \}$ with $P_t^{(0, t-1)}$
so that $p_1 = x$ and $p_t = z_j^{(k)}$ for each $j, k$.
\item Finally, replace the edge $\{ y, z_j^{(k)} \}$ with $P_t^{(j-1, t-j)}$
so that $p_1 = y$ and $p_m = z_j^{(k)}$ for each $j, k$.
Let $G(s_1, s_2, \dots, s_t)$ be the resulting oriented graph.
As an example, we show the oriented graph $G(2,2,1)$ in Figure~\ref{1125-s2}.
\end{enumerate}

\begin{figure}[ht]
\begin{center}
\begin{tikzpicture}
[scale = 0.7,
line width = 0.8pt,
v/.style = {circle, fill = black, inner sep = 0.8mm},u/.style = {circle, fill = white, inner sep = 0.1mm}]
  \node[u] (Lx) at (-1, 0.5) {$x$};
  \node[u] (Ly) at (1, 0.5) {$y$};
  \node[u] (Lz11) at (-3, -2.7) {$z_1^{(1)}$};
  \node[u] (Lz12) at (-2, -2.7) {$z_1^{(2)}$};
  \node[u] (Lz21) at (0, -2.7) {$z_2^{(1)}$};
  \node[u] (Lz22) at (1, -2.7) {$z_2^{(2)}$};
  \node[u] (Lz31) at (3, -2.7) {$z_3^{(1)}$};
  \node[v] (x) at (-1, 0) {};
  \node[v] (y) at (1, 0) {};
  \node[v] (z11) at (-3, -2) {};
  \node[v] (z12) at (-2, -2) {};
  \node[v] (z21) at (0, -2) {};
  \node[v] (z22) at (1, -2) {};
  \node[v] (z31) at (3,-2) {};
  \draw[-] (x) to (y);
  \draw[-] (x) to (z11);
  \draw[-] (x) to (z12);
  \draw[-] (x) to (z21);
  \draw[-] (x) to (z22);
  \draw[-] (x) to (z31);
  \draw[-] (y) to (z11);
  \draw[-] (y) to (z12);
  \draw[-] (y) to (z21);
  \draw[-] (y) to (z22);
  \draw[-] (y) to (z31);
\end{tikzpicture}
\caption{The join of $P_2$ and $\overline{K_{2+2+1}}$} \label{1125-s1}
\end{center}
\end{figure}
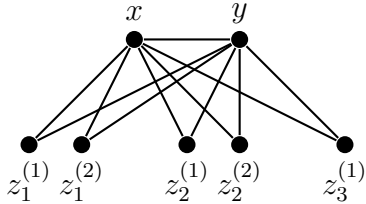

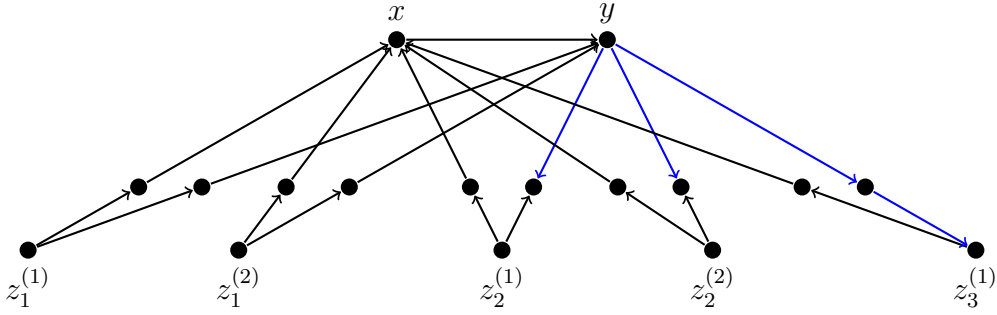
\begin{figure}[ht]
\begin{center}
\begin{tikzpicture}
[scale = 0.7,
line width = 0.8pt,
v/.style = {circle, fill = black, inner sep = 0.8mm},u/.style = {circle, fill = white, inner sep = 0.1mm}]
\node[u] (Lx) at (-2, 0.5) {$x$};
\node[u] (Ly) at (2, 0.5) {$y$};
\node[u] (Lz11) at (-9, -4.7) {$z_1^{(1)}$};
\node[u] (Lz12) at (-5, -4.7) {$z_1^{(2)}$};
\node[u] (Lz21) at (0, -4.7) {$z_2^{(1)}$};
\node[u] (Lz22) at (4, -4.7) {$z_2^{(2)}$};
\node[u] (Lz31) at (9, -4.7) {$z_3^{(1)}$};
\node[v] (x) at (-2, 0) {};
\node[v] (y) at (2, 0) {};
\node[v] (z11) at (-9, -4) {};
\node[v] (z11sx) at ($(z11)!0.3!(x)$) {};
\node[v] (z11sy) at ($(z11)!0.3!(y)$) {};
\node[v] (z12) at (-5, -4) {};
\node[v] (z12sx) at ($(z12)!0.3!(x)$) {};
\node[v] (z12sy) at ($(z12)!0.3!(y)$) {};
\node[v] (z21) at (0, -4) {};
\node[v] (z21sx) at ($(z21)!0.3!(x)$) {};
\node[v] (z21sy) at ($(z21)!0.3!(y)$) {};
\node[v] (z22) at (4, -4) {};  
\node[v] (z22sx) at ($(z22)!0.3!(x)$) {};
\node[v] (z22sy) at ($(z22)!0.3!(y)$) {};
\node[v] (z31) at (9, -4) {};
\node[v] (z31sx) at ($(z31)!0.3!(x)$) {};
\node[v] (z31sy) at ($(z31)!0.3!(y)$) {};
\draw[->] (x) to (y);
\draw[->] (z11) to (z11sx);
\draw[->] (z11sx) to (x);
\draw[->] (z11) to (z11sy);
\draw[->] (z11sy) to (y);
\draw[->] (z12) to (z12sx);
\draw[->] (z12sx) to (x);
\draw[->] (z12) to (z12sy);
\draw[->] (z12sy) to (y);
\draw[->] (z21) to (z21sx);
\draw[->] (z21sx) to (x);
\draw[->] (z21) to (z21sy);
\draw[<-, blue] (z21sy) to (y);
\draw[->] (z22) to (z22sx);
\draw[->] (z22sx) to (x);
\draw[->] (z22) to (z22sy);
\draw[<-, blue] (z22sy) to (y);
\draw[->] (z31) to (z31sx);
\draw[->] (z31sx) to (x);
\draw[<-, blue] (z31) to (z31sy);
\draw[<-, blue] (z31sy) to (y);
\end{tikzpicture}
\caption{The oriented graph $G(2,2,1)$} \label{1125-s2}
\end{center}
\end{figure}

Naturally, the underlying graph $\Gamma(G(s_1, s_2, \dots, s_t))$ is also a kind of book graph with $\sum_{h=1}^{t}s_{h}$ sheets, that is, 
a graph obtained from $\sum_{h=1}^{t}s_{h}$ copies of $C_{2t-1}$ by sharing a common edge $\{x, y\}$. 
Moreover, in terms of magnetic flux 
in Section~\ref{MAM}, 
the flux of the cycle of length $2t-1$ passing through $z^{(k)}_{j}$ 
in $G(s_1, s_2, \dots, s_t)$ is $\pm(2j-1)\theta$.

The odd circumference of $G(s_1, s_2, \dots, s_t)$ is easily captured by looking at the number of arguments.
Namely, the odd circumference of $G(s_1, s_2, \dots, s_t)$ is $2t-1$
since the number of arguments is $t$.
Of course, this oriented graph requires that $t \geq 2$ and
\begin{equation} \label{1120-3}
s_1 + s_2 + \dots + s_t \geq 1
\end{equation}
to make sense.

We note that our oriented graph $G(s_1, s_2, \dots, s_t)$ essentially contains the oriented graph in Figure~\ref{1124-1} provided by Mohar \cite{M}.
It can be seen that they share the same characteristic polynomial by deformation and switching as shown in Figure~\ref{1126-1}.
Remark that both of $H_{\theta}$'s are unitarily equivalent by
Theorem~\ref{UE}. 
For a precise definition of switching, see for example \cite{R}.
Readers who want to understand switching visually can also refer to Section~4.1 in \cite{KSY}.

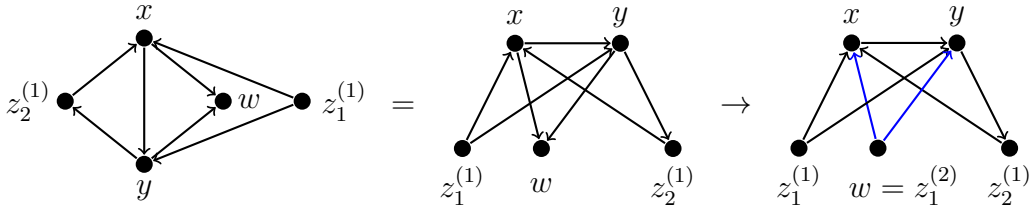
\begin{figure}[ht]
\begin{center}
\begin{tikzpicture}
[scale = 0.7,
line width = 0.8pt,
v/.style = {circle, fill = black, inner sep = 0.8mm},u/.style = {circle, fill = white, inner sep = 0.1mm}]
  \node[u] (d) at (0, -2.7) {};
  \node[u] (L1) at (-0.7, 0) {$z_2^{(1)}$};
  \node[u] (L2) at (1.5, 1.7) {$x$};
  \node[u] (L3) at (1.5, -1.7) {$y$};
  \node[u] (L4) at (3.5, 0) {$w$};
  \node[u] (L5) at (5.3, 0) {$z_1^{(1)}$};
  \node[v] (1) at (0, 0) {};
  \node[v] (2) at (1.5, 1.2) {};
  \node[v] (3) at (1.5, -1.2) {};
  \node[v] (4) at (3, 0) {};
  \node[v] (5) at (4.5, 0) {};
  \draw[->] (1) to (2);
  \draw[->] (3) to (1);
  \draw[->] (2) to (3); 
  \draw[->] (2) to (4);
  \draw[->] (3) to (4);
  \draw[->] (5) to (2);
  \draw[->] (5) to (3);
\end{tikzpicture}
\raisebox{50pt}{$\, = \,$}
\begin{tikzpicture}
[scale = 0.7,
line width = 0.8pt,
v/.style = {circle, fill = black, inner sep = 0.8mm},u/.style = {circle, fill = white, inner sep = 0.1mm}]
  \node[u] (d) at (0, -3.8) {};
  \node[u] (Lx) at (-1, 0.5) {$x$};
  \node[u] (Ly) at (1, 0.5) {$y$};
  \node[u] (Lz11) at (-2, -2.7) {$z_1^{(1)}$};
  \node[u] (Lw) at (-0.5, -2.7) {$w$};
  \node[u] (Lz21) at (2, -2.7) {$z_2^{(1)}$};
  \node[v] (x) at (-1, 0) {};
  \node[v] (y) at (1, 0) {};
  \node[v] (z11) at (-2, -2) {};
  \node[v] (w) at (-0.5, -2) {};
  \node[v] (z21) at (2, -2) {};
\draw[->] (x) to (y);
\draw[<-] (x) to (z11);
\draw[->] (x) to (w);
\draw[<-] (x) to (z21);
\draw[<-] (y) to (z11);
\draw[->] (y) to (w);
\draw[->] (y) to (z21);
\end{tikzpicture}
\raisebox{50pt}{$\, \to \,$}
\begin{tikzpicture}
[scale = 0.7,
line width = 0.8pt,
v/.style = {circle, fill = black, inner sep = 0.8mm},u/.style = {circle, fill = white, inner sep = 0.1mm}]
  \node[u] (Lx) at (-1, 0.5) {$x$};
  \node[u] (Ly) at (1, 0.5) {$y$};
  \node[u] (Lz11) at (-2, -2.7) {$z_1^{(1)}$};
  \node[u] (Lw) at (0, -2.7) {$w = z_1^{(2)}$};
  \node[u] (Lz21) at (2, -2.7) {$z_2^{(1)}$};
  \node[v] (x) at (-1, 0) {};
  \node[v] (y) at (1, 0) {};
  \node[v] (z11) at (-2, -2) {};
  \node[v] (w) at (-0.5, -2) {};
  \node[v] (z21) at (2, -2) {};
\draw[->] (x) to (y);
\draw[<-] (x) to (z11);
\draw[<-, blue] (x) to (w);
\draw[<-] (x) to (z21);
\draw[<-] (y) to (z11);
\draw[<-, blue] (y) to (w);
\draw[->] (y) to (z21);
\end{tikzpicture}
\caption{The oriented graph provided by Mohar \cite{M} is essentially $G(2,1)$} \label{1126-1}
\end{center}
\end{figure}

The oriented graph $G_m$ constructed in Section~\ref{1117-1} is nothing but \[ G(\underbrace{1,1,\dots, 1}_{m}). \]
In the arguments of $G(s_1, s_2, \dots, s_t)$,
the number of repetitions is expressed by superscripts if the same number is repeated in succession.
Under this notation, $G_m$ is described as $G(1^{(m)})$, i.e.,
\[ G_m = G(\underbrace{1,1,\dots, 1}_{m}) = G(1^{(m)}). \]
Note that the odd circumference of $G(1^{(m)})$ is $2m-1$.
Also, $G(0^{(n-1)}, 1)$ is the directed cycle with $2n-1$ vertices.

For our oriented book graph 
$G(s_{1},s_{2},\dots ,s_{t})$, 
its odd circumference is $2t-1$ 
and its number of sheets is $\sum_{j=1}^{t}s_{j}$.  
Then, for any positive integers $k$ and $h$, 
the odd circumference and  
the number of sheets of 
the book graph $G(ks_{1},ks_{2},\dots ,ks_{t},0^{(h)})$ 
are $2(t+h)-1$ and $k\sum_{j=1}^{t}s_{j}$, respectively. 
We should remark that, if $G(s_{1},s_{2},\dots ,s_{t})$ has the $\theta$-symmetric spectrum for some $\theta$, then $G(ks_{1},ks_{2},\dots ,ks_{t},0^{(h)})$ has also the $\theta$-symmetric spectrum for 
the same $\theta$. 
Therefore we have to discuss the minimality of the odd circumference and the number of sheets of 
our oriented book graph $G(s_{1},s_{2},\dots ,s_{t})$ for each $\theta$.
Hereinafter we give some graphs having the $\theta$-symmetric spectrum smaller than that in Section~\ref{1117-1}.

Let $\theta = \frac{l}{m}\pi \in \MB{Q}\pi$.
We say that the angle $\theta$ is {\it irreducible} if $m$ and $l$ are coprime.
If the denominator $m$ of an irreducible angle is congruent to 2 mod 4,
we have a non-bipartite oriented graph whose odd circumference is $\frac{m}{2}$
as an example that breaks down the bipartite detection property.
Remark that $\Gamma(G(0^{(t-1)}, 1))$ is isomorphic to $C_{2t-1}$.

\begin{pro} \label{m2}
{\it
Let $\theta = \frac{l}{m}\pi$ be an irreducible angle.
If $m \equiv 2 \pmod 4$,
then the non-bipartite oriented graph $C = G(0^{(\frac{m-2}{4})}, 1)$,
whose odd circumference is $\frac{m}{2}$,
has the $\theta$-symmetric spectrum.
}
\end{pro}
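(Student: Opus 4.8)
Write $t=\frac{m-2}{4}+1=\frac{m+2}{4}$ (so $t\ge 2$; the degenerate case $m=2$, i.e.\ $\theta=\tfrac\pi2$, is anyway already covered by the Guo--Mohar example). By the description recorded just above, $C=G(0^{(t-1)},1)$ is the directed cycle on $2t-1=\tfrac m2$ vertices, and since $m\equiv 2\pmod 4$ the integer $N:=\tfrac m2$ is odd. Hence $C$ is a directed \emph{odd} cycle: it is non-bipartite, and its circumference and odd circumference both equal $N=\tfrac m2$, as claimed. By Lemma~\ref{0926-1} it remains only to prove that $C$ has the $\theta$-symmetric spectrum, which then witnesses the failure of the bipartite detection property at $\theta$.

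To find $\Spec(C;\theta)$, label the vertices $v_0,\dots,v_{N-1}$ consecutively along the directed cycle, so that $H_\theta(C)=e^{i\theta}S+e^{-i\theta}S^{-1}$ with $S$ the cyclic shift permutation matrix ($S_{v_j,v_{j+1}}=1$, subscripts mod $N$). The Fourier vectors $\psi^{(k)}$ given by $\psi^{(k)}_j=e^{2\pi i j k/N}$ simultaneously diagonalize $S$ and $S^{-1}$, so
\begin{equation*}
\Spec(C;\theta)=\Big\{\,2\cos\big(\theta+\tfrac{2\pi k}{N}\big)\ :\ k=0,1,\dots,N-1\,\Big\}=\Big\{\,2\cos\big(\tfrac{(l+4k)\pi}{m}\big)\ :\ k=0,1,\dots,N-1\,\Big\},
\end{equation*}
using $\theta=\tfrac lm\pi$ and $N=\tfrac m2$. (Equivalently one could read off this spectrum from Theorem~\ref{UE}, since the magnetic flux of $C$ is $N\theta=\tfrac{l\pi}{2}$, or from Theorem~\ref{0923-4} by checking $a_{2l-1}=0$.)

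Finally I invoke the hypotheses. Irreducibility of $\theta$ forces $l$ odd (as $m$ is even), whence $m-2l\equiv 2-2\equiv 0\pmod 4$ and $c:=\tfrac{m-2l}{4}\in\MB{Z}$. The map $\sigma\colon\MB{Z}/N\MB{Z}\to\MB{Z}/N\MB{Z}$, $\sigma(k)=c-k$, is an involutive bijection, and $(l+4k)+(l+4\sigma(k))\equiv 2l+4c=m\pmod{4N}$, so $\tfrac{(l+4k)\pi}{m}+\tfrac{(l+4\sigma(k))\pi}{m}\equiv\pi\pmod{2\pi}$; hence the eigenvalue indexed by $\sigma(k)$ is the negative of that indexed by $k$. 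As $\sigma$ is a bijection, $\Spec(C;\theta)$ is invariant under $\lambda\mapsto-\lambda$, so $C$ has the $\theta$-symmetric spectrum. The one delicate point is this index bookkeeping: one must check that reducing $\sigma(k)$ modulo $N$ alters $4\sigma(k)$ only by a multiple of $4N=2m$ (hence the cosine argument only by a multiple of $2\pi$), and that the sharp hypothesis $m\equiv 2\pmod 4$ --- rather than merely ``$m$ even'' --- is precisely what makes $c$ an integer, equivalently what makes the arithmetic progressions $\{\theta+\tfrac{2\pi k}{N}\}_k$ and $\{(\pi-\theta)+\tfrac{2\pi k}{N}\}_k$ coincide modulo $2\pi$.
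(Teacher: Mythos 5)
Your proof is correct, but it takes a genuinely different route from the paper. The paper never computes the spectrum of $C$: it notes that the only cycle of $C=G(0^{(\frac{m-2}{4})},1)$ is the full odd cycle of length $\frac{m}{2}$, so $\MC{H}_k(C)=\emptyset$ for odd $k<\frac{m}{2}$, and then applies Theorem~\ref{0923-4} to get the single remaining odd coefficient $a_{\frac{m}{2}}=-2\re(C)=-2\cos\bigl(\tfrac{m}{2}\theta\bigr)=-2\cos\tfrac{l}{2}\pi=0$ (here irreducibility forces $l$ odd, exactly as in your argument); Lemma~\ref{0926-3} then gives the $\theta$-symmetric spectrum in two lines. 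You instead diagonalize the circulant matrix $H_\theta(C)=e^{i\theta}S+e^{-i\theta}S^{-1}$, obtain $\Spec(C;\theta)=\{2\cos\tfrac{(l+4k)\pi}{m}\}_{k=0}^{N-1}$ with $N=\tfrac m2$, and exhibit the sign-reversing involution $\sigma(k)=c-k$ with $c=\tfrac{m-2l}{4}\in\MB{Z}$; since $\sigma$ is a bijection of the index set this correctly handles multiplicities (the unique fixed point of $\sigma$, which exists because $N$ is odd, simply carries the eigenvalue $0$). What your approach buys is the explicit spectrum and a transparent picture of where the hypothesis $m\equiv 2\pmod 4$ enters (integrality of $c$); what the paper's approach buys is brevity and uniformity with the rest of Section~6, which handles the cases $m$ odd and $m\equiv 0\pmod 4$ by the same coefficient-vanishing technique (via Lemma~\ref{1109-1}), where an explicit diagonalization is no longer available. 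Two small blemishes, neither affecting correctness: your parenthetical ``or from Theorem~\ref{0923-4} by checking $a_{2l-1}=0$'' reuses the letter $l$ (the relevant coefficient is $a_{m/2}$, i.e.\ $a_{2t-1}$ in the book-graph indexing), and the aside about $m=2$ is really about the construction $G(0^{(t-1)},1)$ requiring $t\geq 2$ rather than about needing the Guo--Mohar example, which is a proper mixed graph and not of the form claimed in the proposition.
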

\begin{proof}
Let $\det(xI_{\frac{m}{2}} - H_{\theta}(C)) = \sum_{j = 0}^{\frac{m}{2}} a_j x^{\frac{m}{2} - j}$.
Since the girth of $C$ is $\frac{m}{2}$,
$a_k = 0$ for odd $k < \frac{m}{2}$.
On the other hand, Theorem~\ref{0921-1} derives
$a_{\frac{m}{2}} = (-1)^1 2^1 \re(C) = -2\cos \frac{m}{2} \theta = -2 \cos \frac{l}{2} \pi = 0$.
By Lemma~\ref{0926-3},
we see that $C$ has the $\theta$-symmetric spectrum.
\end{proof}

When we examine the case $m \not\equiv 2 \pmod 4$,
it is necessary to generalize Lemma~\ref{0930-1}.

\begin{lem} \label{1109-1}
{\it
Let $G$ be a mixed graph with $n$ vertices.
Assume that $G$ satisfies the following two conditions:
\begin{enumerate}[(a)]
\item 
There exists a positive integer $m$ such that the length of any odd cycle of $G$ is $2m-1$.
\item There exists a forest $F$ such that for any odd cycle $C$ of $G$, the graph $\G(G \setminus C)$ is isomorphic to $F$,
where $G \setminus C$ is the subgraph induced by $V(G) \setminus V(C)$.
\end{enumerate}
Let $\det(xI_{n} - H_{\theta}(G)) = \sum_{j = 0}^{n} a_j x^{n - j}$ for $\theta \in (0, \pi]$.
Then we have
\begin{enumerate}[(i)]
\item For any $k < m$, we have $a_{2k-1} = 0$.
\item If $a_{2m-1} = 0$, then we have $a_{2k-1} = 0$ for any $k > m$.
\end{enumerate}
}
\end{lem}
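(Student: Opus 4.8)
The plan is to follow the proof of Lemma~\ref{0930-1} almost verbatim, replacing the two structural facts about $G_m$ used there (every odd cycle has length $2m-1$; removing any odd cycle always leaves the same underlying forest) by the abstract hypotheses (a) and (b). First I would dispose of part (i): for $k < m$, an elementary subgraph on the odd number $2k-1$ of vertices must contain at least one odd cycle, since its components are copies of $K_2$ and cycles and $K_2$ contributes an even number of vertices; but by (a) every odd cycle of $G$ has length $2m-1 > 2k-1$, which is impossible. Hence $\MC{H}_{2k-1}(G) = \emptyset$ and Theorem~\ref{0923-4} gives $a_{2k-1} = 0$.

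For part (ii) the key preliminary observation is that $G$ cannot contain two vertex-disjoint odd cycles: if $C'$ were an odd cycle disjoint from an odd cycle $C$, then $C'$ would be an odd cycle inside $G \setminus C$, contradicting that $\G(G\setminus C) \cong F$ is a forest, hence acyclic. Consequently, for $k > m$, every $H \in \MC{H}_{2k-1}(G)$ consists of exactly one odd cycle $C$ — of length $2m-1$ by (a) — together with an elementary subgraph of $G\setminus C$ on $2(k-m)$ vertices; and since $\G(G\setminus C)\cong F$ is acyclic, that remaining part is forced to be a matching of $k-m$ edges. Thus $p(H) = k-m+1$ and $|\MC{C}(H)| = 1$ for each such $H$.

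Now I would enumerate the odd cycles $C_1, \dots, C_r$ of $G$ (each of length $2m-1$) and use hypothesis (b) to see that $M_k := |\MC{H}_{2(k-m)}(F)|$ is the common value of $|\MC{H}_{2(k-m)}(G\setminus C_i)|$ for all $i$. Grouping the terms of Theorem~\ref{0923-4} according to the unique odd cycle $C_i$ of $H$, the same sign-and-count manipulation already carried out in Lemma~\ref{0930-1} yields
\[
a_{2k-1} = (-1)^{k-m} M_k \sum_{i=1}^r (-2)\re(C_i) = (-1)^{k-m} M_k\, a_{2m-1},
\]
where the last equality uses that $\MC{H}_{2m-1}(G) = \{C_1, \dots, C_r\}$, so that $a_{2m-1} = \sum_{i=1}^r (-2)\re(C_i)$. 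Hence $a_{2m-1} = 0$ forces $a_{2k-1} = 0$ for all $k > m$.

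I expect the only genuine step — as opposed to pure bookkeeping — to be the claim that an elementary subgraph on $2k-1$ vertices contains exactly one odd cycle and that everything else is a matching; this is precisely where hypotheses (a) and (b) enter, and it plays the role that the observation ``every odd cycle of $G_m$ passes through the common arc $(x,y)$'' played in Lemma~\ref{0930-1}. The sign and counting computation afterwards is identical to the displayed chain of equalities in the proof of Lemma~\ref{0930-1}, with $(m-1)P_{2m-3}$ replaced by $F$.
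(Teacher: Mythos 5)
Your proof is correct and takes essentially the same route as the paper's: condition (a) gives $\MC{H}_{2k-1}(G) = \emptyset$ for $k<m$, and condition (b) forces every elementary subgraph on $2k-1$ vertices with $k>m$ to be exactly one odd cycle of length $2m-1$ together with a matching of $k-m$ edges, so that grouping the terms of Theorem~\ref{0923-4} by that unique cycle yields $a_{2k-1} = \pm M_k\, a_{2m-1}$ with $M_k = |\MC{H}_{2(k-m)}(F)|$, exactly as in Lemma~\ref{0930-1}. Incidentally, your sign $(-1)^{k-m}$ (coming from $p(H)=k-m+1$) is the accurate one, while the paper writes $(-1)^{s}$ with $s$ the number of components of $F$; this discrepancy is harmless since only the vanishing of $a_{2m-1}$ matters.
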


\begin{proof}
The essence of the proof is the same as in Lemma~\ref{0930-1}.

(i)
Let $k < m$.
An elementary subgraph with $2k-1$ vertices must have at least one odd cycle.
However, the length of any odd cycles in $G$ is $2m-1$ by the condition~(a).
Thus, we have $\MC{H}_{2k-1}(G) = \emptyset$,
i.e., $a_{2k-1} = 0$.

(ii)
We consider $k > m$.
Let $C_1, C_2, \dots, C_t$ be all odd cycles of length $2m-1$ in $G$,
and let $s$ be the number of connected components of $F$.
Define $M_k$ to be the number of elementary subgraphs in $F$ with $2(k-m)$ vertices, i.e.,
$M_k = |\MC{H}_{2(k-m)}(F)|$.
We would like to show that $a_{2k-1} = (-1)^s M_k a_{2m-1}$.
By Theorem~\ref{0923-4},
\begin{align*}
a_{2k-1} &= \sum_{H \in \MC{H}_{2k-1}(G)} (-1)^{p(H)}2^{|\MC{C}(H)|} \prod_{C \in \MC{C}(H)} \re(C) \\
&= \sum_{j = 1}^t \sum_{\substack{H \in \MC{H}_{2k-1}(G) \\ H = C_j \cup (G \setminus C_j)}} (-1)^{p(H)}2^{|\MC{C}(H)|} \prod_{C \in \MC{C}(H)} \re(C) \\
&= \sum_{j = 1}^t \sum_{\substack{H \in \MC{H}_{2k-1}(G) \\ H = C_j \cup (G \setminus C_j)}} (-1)^{s+1}2^{1} \re(C_j)  \tag{by (b)} \\
&= \sum_{j = 1}^t (-1)^{s+1} 2^1 \re(C_j) \sum_{H \in \MC{H}_{2(k-m)}(G \setminus C_j)} 1 \\
&= \sum_{j = 1}^t (-1)^{s+1} 2^1 \re(C_j) \sum_{H \in \MC{H}_{2(k-m)}(F)} 1 \tag{by (b)} \\
&= \sum_{j = 1}^t (-1)^{s+1} 2^1 \re(C_j) M_k \\
&= (-1)^s M_k \sum_{j = 1}^t (-1)^{1} 2^1 \re(C_j) \\
&= (-1)^s M_k a_{2m-1}.
\end{align*}
Therefore, we see that $a_{2k-1} = 0$ if $a_{2m-1} = 0$.
\end{proof}

\begin{pro} \label{m13}
{\it
Let $\theta = \frac{l}{m}\pi$ be an irreducible angle.
If $m$ is odd,
then the non-bipartite oriented graph $G(2^{(\frac{m-1}{2})}, 1)$,
whose odd circumference is $m$,
has the $\theta$-symmetric spectrum.
}
\end{pro}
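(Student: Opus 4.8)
The plan is to argue exactly as in the proof of Theorem~\ref{1119-2}, using the generalized Lemma~\ref{1109-1} in place of Lemma~\ref{0930-1} and then Lemma~\ref{0926-3}. Set $t = \tfrac{m-1}{2}+1 = \tfrac{m+1}{2}$ (so we are tacitly in the case $m\ge 3$, the case $m=1$ being $\theta=\pi$, already handled), and write $G := G(2^{(\frac{m-1}{2})},1)$, a graph with $t$ arguments and hence odd circumference $2t-1 = m$. First I would check that $G$ meets hypotheses (a) and (b) of Lemma~\ref{1109-1}. Its underlying graph $\Gamma(G)$ is the book graph formed from $\sum_j s_j = m$ copies of $C_{2t-1}$ sharing the edge $\{x,y\}$; any cycle of $\Gamma(G)$ is either a single sheet $C_{2t-1}$ (odd, of length $m$) or the union of the $x$--$y$ paths of two distinct sheets (of even length $2(2t-2)=2m-2$), so every odd cycle has length $2t-1$, giving (a) with $t$ in the role of ``$m$'' in Lemma~\ref{1109-1}. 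For (b), deleting the $m$ vertices of any single sheet removes $x$, $y$, and the interior of that sheet, leaving for each of the remaining $m-1$ sheets a path on $2t-3 = m-2$ vertices; thus $\Gamma(G\setminus C)\cong (m-1)P_{m-2}$ for every odd cycle $C$, a forest not depending on $C$.

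By Lemma~\ref{1109-1} it then suffices to show that the coefficient $a_m$ of $x^{\,n-m}$ in $\det(xI_n-H_\theta(G))$ vanishes; Lemma~\ref{0926-3} then gives the $\theta$-symmetric spectrum. Every elementary subgraph of $G$ on $m$ vertices is a single sheet (a union of $K_2$'s is impossible since $m$ is odd, and any cycle of $\Gamma(G)$ has at least $m$ vertices), and the sheet through $z_j^{(k)}$ has flux $\pm(2j-1)\theta$, hence $\re(C)=\cos((2j-1)\theta)$. So Theorem~\ref{0923-4} yields
\[
a_m \;=\; -2\sum_{j=1}^{t} s_j\cos\bigl((2j-1)\theta\bigr)
\;=\; -2\left(2\sum_{j=1}^{(m-1)/2}\cos\bigl((2j-1)\theta\bigr)+\cos(m\theta)\right),
\]
using $s_j=2$ for $j\le\tfrac{m-1}{2}$, $s_{(m+1)/2}=1$, and $2\cdot\tfrac{m+1}{2}-1=m$.

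To finish I would show that $2\sum_{j=1}^{(m-1)/2}\cos((2j-1)\theta)+\cos(m\theta)=\sum_{j=1}^{m}\cos((2j-1)\theta)$ and that this last sum is $0$. The equality is the reflection $j\mapsto m+1-j$ on $\{1,\dots,m\}$: one has $\cos((2(m+1-j)-1)\theta)=\cos((2m-(2j-1))\theta)=\cos(2l\pi-(2j-1)\theta)=\cos((2j-1)\theta)$, the involution pairs up all indices except the fixed point $j=\tfrac{m+1}{2}$ (which exists precisely because $m$ is odd), and the fixed term is $\cos(m\theta)$. For the vanishing, $\sum_{j=1}^{m}e^{i(2j-1)\theta}=e^{i\theta}\dfrac{e^{2mi\theta}-1}{e^{2i\theta}-1}=0$ because $e^{2mi\theta}=e^{2l\pi i}=1$ while $e^{2i\theta}\neq 1$ for $0<\theta<\pi$; taking real parts gives $\sum_{j=1}^{m}\cos((2j-1)\theta)=0$, hence $a_m=0$. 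Finally, $G$ is non-bipartite since it contains odd cycles.

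I do not anticipate a real obstacle: everything runs parallel to Theorem~\ref{1119-2}. The two places needing care are verifying condition (b) of Lemma~\ref{1109-1} --- that the leftover forest $(m-1)P_{m-2}$ is genuinely independent of which sheet is deleted --- and correctly threading the multiplicities $s_j$ into $a_m$; the hypothesis ``$m$ odd'' is used in an essential way, since it is exactly what makes the reflection $j\mapsto m+1-j$ fix an index, producing the single extra term $\cos(m\theta)$ that the doubled partial sum absorbs.
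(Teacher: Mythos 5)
Your proof is correct and takes essentially the same route as the paper's: apply Lemma~\ref{1109-1}, compute $a_m$ via Theorem~\ref{0923-4} as $-2\sum_{j} s_j\cos\bigl((2j-1)\theta\bigr)$, and use $2m\theta = 2l\pi$ to identify this with $-2\sum_{j=1}^{m}\cos\bigl((2j-1)\theta\bigr) = 0$ via the geometric-sum argument of Theorem~\ref{1119-2}. Your additional verifications (conditions (a) and (b), the description of $\MC{H}_m(G)$, and the exclusion of $m=1$) merely make explicit what the paper leaves implicit.
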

\begin{proof}
Let $G = G(2^{(\frac{m-1}{2})}, 1)$.
The size of all odd cycles in $G$ is $m$.
Also, for any odd cycle $C$ in $G$,
the graph $\G(G \setminus C)$ is isomorphic to $(m-1) P_{m-2}$,
so $G$ is in the situation where Lemma~\ref{1109-1} applies.
Under the same notation as Lemma~\ref{1109-1},
we would like to show that $a_m = 0$.
Let $C_{j}^{(s)}$ denote the odd cycle containing the vertex $z_j^{(s)}$ in $G$.
Then we have $\MC{H}_m(G) = \{C_1^{(1)}, C_1^{(2)}, C_2^{(1)}, C_2^{(2)}, \dots, C_{\frac{m-1}{2}}^{(1)}, C_{\frac{m-1}{2}}^{(2)}, C_{\frac{m+1}{2}}^{(1)} \}$.
By Theorem~\ref{0923-4},
\begin{align*}
a_m &= \sum_{j=1}^{\frac{m+1}{2}} (-1)^1 2^1 \re(C_j^{(1)}) + \sum_{j=1}^{\frac{m-1}{2}} (-1)^1 2^1 \re(C_j^{(2)}) \\
&= -2 \left\{ \sum_{j=1}^{\frac{m-1}{2}} \cos (2j-1)\theta + \sum_{j=1}^{\frac{m+1}{2}} \cos (2j-1)\theta \right\} \\
&= -2 \left\{ \sum_{j=1}^{\frac{m-1}{2}} \cos (2j-1)\theta + \sum_{j=1}^{\frac{m+1}{2}} \cos (2m - (2j-1))\theta \right\} \\
&= -2 \sum_{j=1}^{m} \cos (2j-1)\theta \\
&= 0.
\end{align*}
By Lemma~\ref{1109-1} and Lemma~\ref{0926-3},
we see that $G$ has the $\theta$-symmetric spectrum.
\end{proof}

\begin{pro} \label{m0}
{\it
Let $\theta = \frac{l}{m}\pi$ be an irreducible angle.
If $m \equiv 0 \pmod 4$,
then the non-bipartite oriented graph $G(0^{(\frac{m}{4}-1)}, 1^{(2)})$,
whose odd circumference is $\frac{m}{2}+1$,
has the $\theta$-symmetric spectrum.
}
\end{pro}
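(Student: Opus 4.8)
The plan is to reduce the claim to Lemma~\ref{1109-1}, following the template of Propositions~\ref{m2} and~\ref{m13}. Write $G = G(0^{(\frac{m}{4}-1)}, 1^{(2)})$ and let $t = \frac{m}{4}+1$ be its number of arguments, so that $G$ is a book graph of odd circumference $2t-1 = \frac{m}{2}+1$; this number is odd because $4 \mid m$, so $G$ contains an odd cycle and is non-bipartite. The total number of sheets is $\sum_h s_h = 2$, and the two pages are the odd cycles $C_{m/4}^{(1)}$ and $C_{m/4+1}^{(1)}$ through $z_{m/4}^{(1)}$ and $z_{m/4+1}^{(1)}$, carrying magnetic fluxes $(\frac{m}{2}-1)\theta$ and $(\frac{m}{2}+1)\theta$ respectively.

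First I would check that $G$ satisfies hypotheses~(a) and~(b) of Lemma~\ref{1109-1}, with the integer $\frac{m}{4}+1$ playing the role of ``$m$'' there. For~(a): in any book graph $G(s_1,\dots,s_t)$ the only vertices of degree exceeding $2$ are $x$ and $y$, so a simple cycle meets each of $x, y$ at most once; it is therefore either a single page (a page-path together with the spine edge $\{x,y\}$, of odd length $2t-1$) or a union of two page-paths (of even length $2(2t-2)$). Hence every odd cycle of $G$ is a page $C_{2t-1}$. For~(b): deleting a page $C$ removes $x$, $y$ and the internal vertices of $C$; since exactly one page survives, the induced subgraph on the remaining vertices is that page with the adjacent pair $x,y$ deleted, namely $P_{2t-3} = P_{m/2-1}$, a forest that does not depend on which page $C$ was deleted.

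Next I would evaluate the coefficient $a_{m/2+1}$ of the characteristic polynomial of $H_\theta(G)$. By Theorem~\ref{0923-4}, $\MC{H}_{m/2+1}(G)$ is exactly the set of the two pages, each a single connected cycle, so
\[ a_{m/2+1} = (-1)^1 2^1 \re(C_{m/4}^{(1)}) + (-1)^1 2^1 \re(C_{m/4+1}^{(1)}) = -2\big( \cos((\tfrac{m}{2}-1)\theta) + \cos((\tfrac{m}{2}+1)\theta) \big). \]
Substituting $\theta = \frac{l}{m}\pi$ turns the two arguments into $\frac{l\pi}{2}-\theta$ and $\frac{l\pi}{2}+\theta$, whence the cosine addition formula gives $a_{m/2+1} = -4\cos\frac{l\pi}{2}\cos\theta$. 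Since $\theta$ is irreducible and $4 \mid m$, the numerator $l$ is odd, so $\cos\frac{l\pi}{2} = 0$ and $a_{m/2+1} = 0$. Then Lemma~\ref{1109-1} gives $a_{2k-1} = 0$ for every $k$ (part~(i) for $k < \frac{m}{4}+1$, the display above for $k = \frac{m}{4}+1$, part~(ii) for $k > \frac{m}{4}+1$), and Lemma~\ref{0926-3} concludes that $G$ has the $\theta$-symmetric spectrum.

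The step I expect to be the main obstacle is hypothesis~(b) of Lemma~\ref{1109-1}: one must argue cleanly that deleting \emph{any} odd cycle of $G$ leaves the \emph{same} forest, i.e.\ that the two pages are interchangeable inside the underlying graph even though they carry different orientations, and identify that forest as $P_{m/2-1}$. Once this ``independence of the page'' is established, the remaining trigonometric cancellation is identical in spirit to the ones already carried out in Propositions~\ref{m2} and~\ref{m13}, and presents no real difficulty.
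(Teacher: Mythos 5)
Your proposal is correct and follows essentially the same route as the paper: reduce to Lemma~\ref{1109-1}, compute $a_{\frac{m}{2}+1} = -2\bigl(\cos(\tfrac{m}{2}-1)\theta + \cos(\tfrac{m}{2}+1)\theta\bigr)$ via Theorem~\ref{0923-4}, and use that $l$ is odd to get cancellation (the paper uses the identity $\cos(\tfrac{m}{2}+1)\theta = \cos\bigl(l\pi - (\tfrac{m}{2}-1)\theta\bigr) = -\cos(\tfrac{m}{2}-1)\theta$ where you use the sum-to-product form $-4\cos\tfrac{l\pi}{2}\cos\theta$, which is the same computation). Your extra verification of hypotheses (a) and (b) of Lemma~\ref{1109-1} is a detail the paper simply asserts, and your identification of the leftover forest as $P_{\frac{m}{2}-1}$ agrees with the paper.
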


\begin{proof}
Let $G = G(0^{(\frac{m}{4}-1)}, 1^{(2)})$.
We note that the size of any odd cycles in $G$ is $\frac{m}{2}+1$.
Also, for any odd cycle $C$ in $G$,
the graph $\G(G \setminus C)$ is isomorphic to $P_{\frac{m}{2}-1}$,
so $G$ is in the situation where Lemma~\ref{1109-1} applies.
Under the same notation as Lemma~\ref{1109-1},
we would like to show that $a_{\frac{m}{2}+1} = 0$.
Note that $l$ is odd since $m$ and $l$ are coprime.
We have
\begin{equation} \label{1119-1}
\cos \left( \frac{m}{2}+1 \right)\theta
= \cos \left( l \pi - \left( \frac{m}{2} -1 \right)\theta \right)
= - \cos \left( \frac{m}{2}-1 \right)\theta.
\end{equation}
By Theorem~\ref{0923-4},
\begin{align*}
a_{\frac{m}{2}+1} &= (-1)^1 2^1 \cos \left( \frac{m}{2} - 1 \right)\theta + (-1)^1 2^1 \cos \left( \frac{m}{2} + 1 \right)\theta \\
&= -2\left( \cos \left( \frac{m}{2} - 1 \right)\theta - \cos \left( \frac{m}{2} - 1 \right)\theta \right) \tag{by (\ref{1119-1})} \\
&= 0.
\end{align*}
By Lemma~\ref{1109-1} and Lemma~\ref{0926-3},
we see that $G$ has the $\theta$-symmetric spectrum.
\end{proof}

We summarize the discussion up to here.
We have shown in Theorem~\ref{1119-2} that $\theta \in \MB{Q}\pi$ does not have the bipartite detection property,
but the odd circumference of the oriented graph $G_m$ used there was $2m-1$.
In Proposition~\ref{m2}, Proposition~\ref{m13}, and Proposition~\ref{m0},
we attempted to reduce odd circumferences.
The results are summarized in Table~\ref{1119-3}.

%
\begin{table}[htbp]
  \centering
  \begin{tabular}{|c|c|c|c|} \hline
    $m$ & Odd circumference &
     \# of sheets & Proposition \\ \hline\hline
    $*$ & $ 2m-1$ & $m$ & Theorem~\ref{1119-2} \\ \hline
    odd & $m$ & $m$ &\ref{m13} \\
    $\equiv 2 \pmod 4$ & $\frac{m}{2}$ & $1$ &  \ref{m2}  \\
    $\equiv 0 \pmod 4$ & $\frac{m}{2} + 1$ & $2$ & \ref{m0} \\ \hline
  \end{tabular}
  \caption{Upper bounds of odd circumferences and the number of sheets of our oriented book graphs for each irreducible angle $\theta = \frac{l}{m}\pi$}
  \label{1119-3}
\end{table}
In the following,
lower bounds of odd circumferences are discussed.
From Proposition~\ref{m13},
we know that the upper bound of odd circumferences in non-bipartite oriented graphs with $\theta$-symmetric spectrum is $m$ for an irreducible angle $\theta = \frac{l}{m}\pi$.
In fact, if $m$ is odd prime,
the oriented graph $G(s_1, \dots, s_t)$ whose odd circumference is less than $m$ does not have the $\theta$-symmetric spectrum:

\begin{pro} \label{1122-4}
{\it
Let $\theta = \frac{l}{p}\pi$ be an irreducible angle,
where $p$ is odd prime.
Then, $G(s_1, \dots, s_{\frac{p-1}{2}})$,
whose odd circumference is $p-2$,
does not have the $\theta$-symmetric spectrum.
}
\end{pro}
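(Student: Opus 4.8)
The plan is to compute the coefficient $a_{p-2}$ of the characteristic polynomial $\det(xI-H_\theta(G))$ for $G=G(s_1,\dots,s_{(p-1)/2})$ and show it is nonzero; by Lemma~\ref{0926-3} this forces $G$ not to have the $\theta$-symmetric spectrum. Throughout, $t:=\tfrac{p-1}{2}\ge 2$ (so $p\ge 5$), the odd circumference of $G$ is $2t-1=p-2$ (which is odd), and by~(\ref{1120-3}) at least one $s_j$ is positive.

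First I would pin down $\MC{H}_{p-2}(G)$. Since $p-2$ is odd, every elementary subgraph on $p-2$ vertices contains at least one odd cycle; as every odd cycle of $G$ has length exactly $p-2$, such a subgraph must be a single odd cycle of length $p-2$. Writing $C_j^{(k)}$ for the $(2t-1)$-cycle through $z_j^{(k)}$, whose magnetic flux is $\pm(2j-1)\theta$ and hence $\re(C_j^{(k)})=\cos((2j-1)\theta)$, Theorem~\ref{0923-4} gives
\[
a_{p-2} \;=\; \sum_{j=1}^{(p-1)/2}\sum_{k=1}^{s_j}(-1)^1\,2^1\,\re\!\big(C_j^{(k)}\big) \;=\; -2\sum_{j=1}^{(p-1)/2} s_j\cos\big((2j-1)\theta\big).
\]
So it suffices to prove that $S:=\sum_{j=1}^{(p-1)/2} s_j\cos((2j-1)\theta)\neq 0$ whenever the $s_j$ are nonnegative integers, not all zero. (The combinatorial mechanism here is the same as in Lemma~\ref{1109-1}.)

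For this I would pass to the cyclotomic field. Set $\omega=e^{i\theta}=e^{il\pi/p}$; since the angle is irreducible and $p$ is prime, $\omega$ is a primitive $N$-th root of unity with $N=p$ if $l$ is even and $N=2p$ if $l$ is odd, in either case $\omega^p=(-1)^l=:\varepsilon$, and the minimal polynomial $\Phi_N$ of $\omega$ over $\MB{Q}$ has degree $\varphi(N)=p-1$ with $\Phi_N(0)\neq 0$. Consequently the $p$ powers $1,\omega,\dots,\omega^{p-1}$ span $\MB{Q}(\omega)$ and satisfy a $\MB{Q}$-linear relation that is unique up to scaling, whose $\omega^0$-coefficient is nonzero. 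Now $2S=\sum_j s_j(\omega^{2j-1}+\omega^{-(2j-1)})$, and using $\omega^{-(2j-1)}=\varepsilon\,\omega^{p-(2j-1)}$ the exponents that occur are $2j-1\in\{1,3,\dots,p-2\}$ together with $p-(2j-1)\in\{2,4,\dots,p-1\}$; these all lie in $\{1,\dots,p-1\}$ and the two families are disjoint since $p$ is odd. Hence $2S=\sum_{k=1}^{p-1} d_k\omega^k$ with $d_{2j-1}=s_j$, so the coefficient of $\omega^0$ is $0$. If $S=0$, uniqueness of the relation forces $(d_0,d_1,\dots,d_{p-1})$ to be a scalar multiple of the coefficient vector of $\Phi_N$; the vanishing $\omega^0$-entry together with $\Phi_N(0)\neq 0$ forces that scalar to be $0$, whence every $s_j=0$, contradicting $\sum_j s_j\ge 1$. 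Therefore $a_{p-2}=-2S\neq 0$.

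The combinatorial step — identifying the $(p-2)$-vertex elementary subgraphs and reading off $a_{p-2}$ from Theorem~\ref{0923-4} — is essentially forced. The real obstacle is the no-cancellation claim for the cosines, and the key point is that the \emph{primality} of $p$ makes $\{1,\omega,\dots,\omega^{p-1}\}$ carry only one $\MB{Q}$-linear relation, so a combination with vanishing constant term and the prescribed coefficients on $\omega,\dots,\omega^{p-1}$ cannot vanish. It would remain to double-check the bookkeeping of exponents (that $2j-1\le p-2$, that $p-(2j-1)\ge 2$, and the odd/even disjointness) and the identity $\omega^p=(-1)^l$; these are routine but load-bearing.
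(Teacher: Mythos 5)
Your proposal is correct and follows essentially the same route as the paper: compute $a_{p-2}=-2\sum_j s_j\cos((2j-1)\theta)$ from Theorem~\ref{0923-4} via the odd cycles of length $p-2$, then use that the relevant root of unity has $\MB{Q}$-minimal polynomial of degree $p-1$ to force $s_1=\dots=s_{\frac{p-1}{2}}=0$, contradicting~(\ref{1120-3}). The only cosmetic difference is bookkeeping: the paper rescales by $\zeta_{2p}^{pl}$ to express everything as a combination of the $p$-th roots $\zeta_p^{jl}$, $j=1,\dots,p-1$, and invokes their linear independence, while you keep $\omega=e^{il\pi/p}$ (a $p$-th or $2p$-th primitive root) and argue via the one-dimensionality of the relation space of $1,\omega,\dots,\omega^{p-1}$ together with $\Phi_N(0)\neq 0$.
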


\begin{proof}
Let $G = G(s_1, \dots, s_{\frac{p-1}{2}})$.
We would like to derive a contradiction by assuming that $G$ has the $\theta$-symmetric spectrum.
Let $\Psi(x) = \sum_{j=0}^n a_j x^{n-j}$ be the characteristic polynomial of $H_{\theta}(G)$,
where $n$ is the number of vertices of $G$.
Since $G$ has the $\theta$-symmetric spectrum,
we have
\begin{equation} \label{1120-1}
a_{p-2} = 0
\end{equation}
by Lemma~\ref{0926-3}.
Let $\zeta_k = e^{\frac{2\pi}{k}i}$ for a positive integer $k$.
Theorem~\ref{0923-4} derives
\begin{align}\label{1218-1}
a_{p-2} &= -2 \sum_{j=1}^{\frac{p-1}{2}} s_j \cos (2j-1)\theta \notag\\
&= -\sum_{j=1}^{\frac{p-1}{2}}s_{j}(\zeta_{2p}^{(2j-1)l} + \zeta_{2p}^{-(2j-1)l}) \notag\\
&= -\sum_{j=1}^{\frac{p-1}{2}}s_j( \zeta_{2}^l \zeta_{2}^l \zeta_{2p}^{(2j-1)l} + \zeta_{2}^l \zeta_{2}^l \zeta_{2p}^{-(2j-1)l}) \notag\\
&= -\sum_{j=1}^{\frac{p-1}{2}}s_j( (-1)^l \zeta_{2p}^{pl} \zeta_{2p}^{(2j-1)l} + (-1)^l \zeta_{2p}^{pl} \zeta_{2p}^{-(2j-1)l}) \notag\\
&= (-1)^{l+1}\sum_{j=1}^{\frac{p-1}{2}}s_j(\zeta_{2p}^{(p+2j-1)l} + \zeta_{2p}^{(p-2j+1)l}) \notag\\
&= (-1)^{l+1}\sum_{j=1}^{\frac{p-1}{2}}s_j(\zeta_{p}^{\frac{(p+2j-1)l}{2}} + \zeta_{p}^{\frac{(p-2j+1)l}{2}}). 
\end{align}
Here we should remark that $p$ is odd.
Combining with Equality~(\ref{1120-1}) yields
\begin{equation} \label{1120-2}
\sum_{j=1}^{\frac{p-1}{2}}s_j(\zeta_{p}^{\frac{(p+2j-1)l}{2}} + \zeta_{p}^{\frac{(p-2j+1)l}{2}}) = 0.
\end{equation}
In addition,
$\{ \zeta_{p}^{\frac{(p+2j-1)l}{2}} \mid j = 1, 2, \dots, \frac{p-1}{2} \} \cup \{ \zeta_{p}^{\frac{(p-2j+1)l}{2}} \mid j = 1, 2, \dots, \frac{p-1}{2} \}
= \{ \zeta_{p}^{jl} \mid j = 1, 2, \dots, p-1 \}$.
Thus, Equality~(\ref{1120-2}) is a $\MB{Q}$-linear combination of $\zeta_p^l, \zeta_p^{2l}, \dots, \zeta_p^{(p-1)l}$.
However, the degree of the $\MB{Q}$-minimal polynomial of $\zeta_p^l$ is $p-1$,
so the $\MB{Q}$-linear combination has to be trivial.
Thus, $s_1 = s_2 = \dots = s_{\frac{p-1}{2}} = 0$.
This contradicts~(\ref{1120-3}).
\end{proof}

The above proposition is an assertion on minimality with respect to the odd circumference,
whereas in fact it is also possible to show minimality with respect to the number of sheets:

\begin{pro} \label{1219-1}
{\it
Let $\theta = \frac{l}{p}\pi$ be an irreducible angle,
where $p$ is odd prime.
If the oriented graph $G(s_1, \dots, s_t)$ has the $\theta$-symmetric spectrum, then $s_1 + \dots + s_t \geq p$.
The equality holds if $(s_1, \dots, s_{\frac{p-1}{2}}, s_{\frac{p+1}{2}}) = (2,\dots, 2,1)$.
}
\end{pro}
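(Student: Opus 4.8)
The plan is to follow the pattern of Lemma~\ref{0930-1} and Proposition~\ref{1122-4}: reduce the hypothesis to the vanishing of a single coefficient of the characteristic polynomial, and then translate that into a relation among $p$-th roots of unity.

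First I would record the structure of $G := G(s_1,\dots,s_t)$. Its underlying graph is the book graph obtained from $s_1+\dots+s_t$ copies of $C_{2t-1}$ glued along the edge $\{x,y\}$; in such a book graph every cycle is either a single sheet ($2t-1$ edges, odd) or a union of two sheet-arcs joining $x$ and $y$ ($2(2t-2)$ edges, even), so every odd cycle of $G$ has length $2t-1$, and deleting such a cycle $C$ leaves $\G(G\setminus C)\cong(s_1+\dots+s_t-1)P_{2t-3}$, a forest not depending on $C$. Hence $G$ satisfies conditions~(a) and~(b) of Lemma~\ref{1109-1} with $m=t$, and by Lemma~\ref{1109-1} together with Lemma~\ref{0926-3}, $G$ has the $\theta$-symmetric spectrum if and only if $a_{2t-1}=0$, where the $a_j$ are the coefficients of the characteristic polynomial of $H_\theta(G)$. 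Since $2t-1$ is odd, an elementary subgraph on $2t-1$ vertices must contain an odd component, necessarily an odd cycle of length $2t-1$, so it equals that cycle; as there are $s_j$ odd cycles of flux $(2j-1)\theta$, Theorem~\ref{0923-4} gives
\[
a_{2t-1}=-2\sum_{j=1}^{t}s_j\cos(2j-1)\theta .
\]

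Next I would assume $G$ has the $\theta$-symmetric spectrum, so that $\sum_{j=1}^t s_j\cos(2j-1)\theta=0$, and mimic the computation~(\ref{1218-1}). Putting $\zeta_{2p}=e^{\pi i/p}$ and using $\zeta_{2p}^{pl}=(-1)^l$, one rewrites, for each $j$,
\[
2\cos(2j-1)\theta=\zeta_{2p}^{(2j-1)l}+\zeta_{2p}^{-(2j-1)l}=(-1)^l\bigl(\zeta_p^{(2j-1)c}+\zeta_p^{-(2j-1)c}\bigr),
\]
where $c\equiv l\cdot\tfrac{p+1}{2}\pmod p$, so that $2c\equiv l\pmod p$ and $\gcd(c,p)=1$. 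Cancelling $(-1)^l$, the hypothesis becomes
\[
\sum_{j=1}^{t}s_j\bigl(\zeta_p^{(2j-1)c}+\zeta_p^{-(2j-1)c}\bigr)=\sum_{r=0}^{p-1}n_r\,\zeta_p^{r}=0,
\]
where each $n_r\ge 0$ is the multiplicity with which $\zeta_p^r$ occurs; every index $j$ contributes its $s_j$ exactly twice, so $\sum_{r=0}^{p-1}n_r=2\sum_{j=1}^{t}s_j$. Since the $\MB{Q}$-minimal polynomial of $\zeta_p$ is $1+x+\dots+x^{p-1}$, the only $\MB{Q}$-linear relation among $1,\zeta_p,\dots,\zeta_p^{p-1}$ is the all-ones one, so $n_0=n_1=\dots=n_{p-1}=:N$ and hence $2\sum_j s_j=pN$. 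As $\sum_j s_j=pN/2$ is a positive integer (positivity by~(\ref{1120-3})) and $p$ is odd, $N$ is a positive \emph{even} integer, so $N\ge 2$ and $s_1+\dots+s_t=pN/2\ge p$.

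For the equality clause, when $(s_1,\dots,s_{(p-1)/2},s_{(p+1)/2})=(2,\dots,2,1)$ the graph $G(s_1,\dots,s_t)$ is precisely $G(2^{((p-1)/2)},1)$, which by Proposition~\ref{m13} (with $m=p$) is non-bipartite and has the $\theta$-symmetric spectrum, and $s_1+\dots+s_t=2\cdot\tfrac{p-1}{2}+1=p$, so the bound is attained. I expect the crux to be the parity step forcing $N$ even: from $2\sum_j s_j=pN$ with $N\ge 1$ alone one only gets $\sum_j s_j\ge p/2$, and it is precisely the oddness of $p$ that upgrades this to the sharp bound. The other point needing care is the verification that $G(s_1,\dots,s_t)$ meets the hypotheses of Lemma~\ref{1109-1} for every $t\ge 2$ — in particular that all odd cycles of the book graph share the length $2t-1$ — since the reduction to the single coefficient $a_{2t-1}$ rests on it.
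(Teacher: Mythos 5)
Your proof is correct, and for the main inequality it takes a genuinely different route from the paper's. The paper first normalizes the number of arguments: using $\cos(2(kp+j)-1)\theta=\cos(2j-1)\theta$ and $\cos\bigl(2(\tfrac{p+1}{2}-j)-1\bigr)\theta=\cos\bigl(2(\tfrac{p+1}{2}+j)-1\bigr)\theta$ it moves all sheets into positions $1,\dots,\tfrac{p+1}{2}$ without changing the characteristic polynomial or the number of sheets, invokes Proposition~\ref{1122-4} to exclude $t<\tfrac{p+1}{2}$, and then reads $a_p=0$ as $f(\zeta_p^l)=0$ for an explicit polynomial of degree at most $p-1$, forcing $s_1=\dots=s_{\frac{p-1}{2}}=2s_{\frac{p+1}{2}}=k$ with $k\geq 2$, whence the bound. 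You skip the normalization entirely: you write $a_{2t-1}=0$ (which for your direction already follows from Lemma~\ref{0926-3}; Lemma~\ref{1109-1} is only needed for the converse, which you do not use) as a vanishing sum $\sum_{r=0}^{p-1} n_r\zeta_p^r=0$ with non-negative integer multiplicities summing to $2\sum_j s_j$, apply the same cyclotomic fact (the only $\MB{Q}$-relation among $1,\zeta_p,\dots,\zeta_p^{p-1}$ is the all-ones one) to get $n_0=\dots=n_{p-1}=N$, and then the parity step $2\sum_j s_j=pN$ with $p$ odd forces $N\geq 2$. This is shorter, works uniformly in $t$ without Proposition~\ref{1122-4}, and isolates exactly where the oddness of $p$ enters; what it does not deliver, and what the paper's normal form does, is the precise shape of the extremal configurations in the reduced form ($s_1=\dots=s_{\frac{p-1}{2}}=k$, $s_{\frac{p+1}{2}}=k/2$). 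Two small points to spell out in a final write-up: the sign in $\zeta_{2p}^{(2j-1)l}=(-1)^l\zeta_p^{(2j-1)c}$ with $2c\equiv l\pmod p$ is independent of $j$ precisely because $p$ is odd (this is the same manipulation as~(\ref{1218-1}) in the paper), and when $p\mid 2j-1$ both exponents collapse to $0$ so that $j$ contributes $2s_j$ to $n_0$, which your counting implicitly covers. The equality clause via Proposition~\ref{m13} is exactly the paper's argument.
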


\begin{proof}
First, we would like to check that it is sufficient to investigate only the case where the number of arguments of $G(s_1, \dots, s_t)$ is $\frac{p+1}{2}$.
Suppose $t > p$.
For arbitrary positive integer $k$ and for $j \in \{1,2,\dots,p\}$,
the oriented graphs $G(s_1, \dots, s_j, \dots, s_{kp+j}, \dots, s_t)$ and $G(s_1, \dots, s_j + 1, \dots, s_{kp+j} -1, \dots, s_t)$ share the same characteristic polynomial and the same number of sheets, i.e. the same sum of arguments.
This is because $\cos (2(kp+j)-1)\theta = \cos (2j-1)\theta$.
Thus, we may assume that $t \leq p$.
In addition, we suppose $\frac{p+1}{2} < t \leq p$.
For $j \in \{1,2,\dots,\frac{p-1}{2}\}$,
the oriented graphs $G(s_1, \dots, s_{\frac{p+1}{2} - j}, \dots, s_{\frac{p+1}{2} + j}, \dots, s_t)$ and $G(s_1, \dots, s_{\frac{p+1}{2} - j} + 1, \dots, s_{\frac{p+1}{2} + j} -1, \dots, s_t)$ share the same characteristic polynomial and the same number of sheets.
This is because $\cos (2(\frac{p+1}{2} - j)-1)\theta = \cos (2(\frac{p+1}{2} + j)-1)\theta$.
Thus, we may assume that $t \leq \frac{p+1}{2}$.
On the other hand,
Proposition~\ref{1122-4} implies that $G(s_1, \dots, s_t)$ does not have the $\theta$-symmetric spectrum if $t < \frac{p+1}{2}$.
Therefore, it is sufficient to consider $t = \frac{p+1}{2}$,
and we have
\begin{equation} \label{1130-3}
s_{\frac{p+1}{2}} \geq 1.
\end{equation}

Next, we show that if the oriented graph $G = G(s_1, \dots, s_{\frac{p+1}{2}})$ has the $\theta$-symmetric spectrum, then $s_1 + \dots + s_{\frac{p+1}{2}} \geq p$.
Since $G$ has the $\theta$-symmetric spectrum,
we have
\begin{equation} \label{1130-1}
a_{p} = 0
\end{equation}
by Lemma~\ref{0926-3}.
On the other hand, Theorem~\ref{0923-4} with (\ref{1218-1}) derives
\begin{equation} \label{1130-2}
a_{p} = (-1)^{l+1}\sum_{j=1}^{\frac{p+1}{2}}s_j(\zeta_{p}^{\frac{(p+2j-1)l}{2}} + \zeta_{p}^{\frac{(p-2j+1)l}{2}}),
\end{equation}
where $\zeta_p = e^{\frac{2\pi}{p}i}$. 
By Equalities~(\ref{1130-1}) and~(\ref{1130-2}),
we have
\[
s_{\frac{p-1}{2}}\zeta_{p}^{(p-1)l} + s_{\frac{p-3}{2}}\zeta_{p}^{(p-2)l} + \dots + 
s_{1}\zeta_{p}^{\frac{p+1}{2}l} +
s_{1}\zeta_{p}^{\frac{p-1}{2}l} +
s_{2}\zeta_{p}^{\frac{p-3}{2}l} + \dots +
s_{\frac{p-1}{2}}\zeta_{p}^{l} +
2s_{\frac{p+1}{2}} = 0.
\]
%
Define the polynomial
\[ f(x) = s_{\frac{p-1}{2}}x^{p-1} + s_{\frac{p-3}{2}}x^{p-2} + \dots + 
s_{1}x^{\frac{p+1}{2}} +
s_{1}x^{\frac{p-1}{2}} +
s_{2}\zeta_{p}^{\frac{p-3}{2}} +
\dots +
s_{\frac{p-1}{2}}x +
2s_{\frac{p+1}{2}}. \]
This is a $\MB{Q}$-coefficient polynomial with degree at most $p-1$ satisfying $f(\zeta_p^l) = 0$.
Thus, the polynomial $f(x)$ is a constant multiple of the $\MB{Q}$-minimal polynomial of $\zeta_p^l$, i.e.,
there exists a rational number $k$ such that $f(x) = k(x^{p-1} + x^{p-2} + \dots + 1 )$.
We have $s_{\frac{p-1}{2}} = s_{\frac{p-3}{2}} = \dots = s_1 = 2s_{\frac{p+1}{2}} = k$.
This equality and (\ref{1130-3}) derive $1 \leq s_{\frac{p+1}{2}} = \frac{k}{2}$, that is, $k \geq 2$.
Therefore, we have
$s_1 + \dots + s_{\frac{p-1}{2}} + s_{\frac{p+1}{2}} = k \cdot \frac{p-1}{2} + \frac{k}{2} \geq p$.

Finally, we note that it has already been proved in Proposition~\ref{m13} that $G$ has the $\theta$-symmetric spectrum when $(s_1, \dots, s_{\frac{p-1}{2}}, s_{\frac{p+1}{2}}) = (2,\dots, 2,1)$, completing the proof.
\end{proof}

We return the discussion to the minimality of odd circumferences.
Table~\ref{1228-1} summarizes the minimum odd circumferences of our book graphs known at this stage in small $m$ for an irreducible angle $\theta = \frac{l}{m}\pi$.
Note that although we have only discussed the minimality of odd circumferences for odd primes in Proposition~\ref{1122-4},
we have also found the minimality for $m$ whose upper bound is $3$,
i.e., $m=4,6$.

\begin{table}[ht]
  \centering
  \begin{tabular}{|c||c|c|c|c|c|c|c|c|c|c|c|c} \hline
$m$ &3&4&5&6&7&8&9&10&11&12&13&$\cdots$ \\ \hline
Value in Table~\ref{1119-3} &3&3&5&3&7&5&9&5&11&7&13&$\cdots$ \\ \hline
Min. odd circumference &3&3&5&3&7&?&?&?&11&?&13&$\cdots$ \\ \hline
  \end{tabular}
  \caption{Minimum odd circumferences in small $m$ for an irreducible angle $\theta = \frac{l}{m}\pi$}
  \label{1228-1}
\end{table}

In the remainder of this paper,
we would like to discuss the minimum odd circumferences for $m \in \{8,9,10,12 \}$.
Relatively easy cases are $m = 8, 10$.
In these cases, values of cosines can be calculated explicitly,
and hence the same procedure as in Proposition~\ref{1122-4} can be shown that
the oriented graph $G(s_1, s_2)$,
whose odd circumference is $3$,
does not have the $\theta$-symmetric spectrum.
Thus, the odd circumferences are at least $5$,
while Table~\ref{1119-3} shows that this value is optimal.

For $m = 9$,
it is difficult to explicitly calculate the values of cosines,
so we use field theoretical techniques:

\begin{pro}
{\it
Let $\theta = \frac{l}{9}\pi$ be an irreducible angle.
Then $G(1,0,1,1)$ 
has the $\theta$-symmetric spectrum,
while $G(s_1, s_2, s_3)$ 
does not.
}
\end{pro}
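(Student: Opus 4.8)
The plan is to prove the two assertions separately, in both cases combining the coefficient formula of Theorem~\ref{0923-4} with Lemma~\ref{0926-3}.

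\textbf{The graph $G(1,0,1,1)$.} Here $t=4$, so every odd cycle has length $2t-1=7$, and deleting any odd cycle removes a whole sheet together with the spine edge $\{x,y\}$, leaving $(1+0+1+1)-1=2$ disjoint copies of $P_{2t-3}=P_5$. Hence $G(1,0,1,1)$ satisfies hypotheses (a) and (b) of Lemma~\ref{1109-1} with $m=4$ and forest $F=2P_5$, so $a_{2k-1}=0$ for every $k<4$; it then remains only to check $a_7=0$, after which Lemma~\ref{0926-3} gives the $\theta$-symmetric spectrum (non-bipartiteness being clear, as the graph contains $C_7$). By Theorem~\ref{0923-4}, $a_7=-2(\cos\theta+\cos 5\theta+\cos 7\theta)$, and $2(\cos\theta+\cos 5\theta+\cos 7\theta)=\sum_{k\in\{\pm1,\pm5,\pm7\}}e^{ik\theta}$. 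Since $\theta=\frac{l}{9}\pi$ with $\gcd(l,9)=1$, the number $e^{i\theta}$ is a primitive $d$-th root of unity with $d=18$ if $l$ is odd and $d=9$ if $l$ is even; in both cases the six exponents $\{\pm1,\pm5,\pm7\}$ reduce modulo $d$ exactly to the residues coprime to $d$, so the six numbers $e^{ik\theta}$ are precisely the primitive $d$-th roots of unity, whose sum is $0$ for $d\in\{9,18\}$ because $\Phi_9(x)=x^6+x^3+1$ and $\Phi_{18}(x)=x^6-x^3+1$ have vanishing $x^5$-coefficient. Thus $a_7=0$.

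\textbf{The graphs $G(s_1,s_2,s_3)$.} Here $t=3$ and the odd circumference is $5$. The girth of $\Gamma(G(s_1,s_2,s_3))$ is $5$, so the only elementary subgraphs on $5$ vertices are the odd $5$-cycles, and Theorem~\ref{0923-4} gives $a_5=-2\bigl(s_1\cos\theta+s_2\cos 3\theta+s_3\cos 5\theta\bigr)$. If this graph had the $\theta$-symmetric spectrum, Lemma~\ref{0926-3} would force $a_5=0$. Now $\cos 3\theta=\cos\frac{l}{3}\pi\in\{\tfrac12,-\tfrac12\}$ since $\gcd(l,3)=1$, so $\cos 3\theta$ is a nonzero rational. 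Moreover $\MB{Q}(e^{i\theta})=\MB{Q}(\zeta_9)$ has degree $6$ over $\MB{Q}$ (as $e^{i\theta}$ is a primitive $9$th or $18$th root of unity and $\MB{Q}(\zeta_{18})=\MB{Q}(\zeta_9)$), hence $[\MB{Q}(\cos\theta):\MB{Q}]=3$ and $\{1,\cos\theta,\cos^2\theta\}$ is a $\MB{Q}$-basis of $\MB{Q}(\cos\theta)$. Reducing the identity $\cos 5\theta=16\cos^5\theta-20\cos^3\theta+5\cos\theta$ modulo the relation $\cos^3\theta=\tfrac34\cos\theta\pm\tfrac18$ (which comes from $4\cos^3\theta-3\cos\theta=\cos 3\theta$) yields $\cos 5\theta=\pm2\cos^2\theta-\cos\theta\mp1$, whose $\cos^2\theta$-coefficient is nonzero; therefore $1,\cos\theta,\cos 5\theta$ are linearly independent over $\MB{Q}$. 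Consequently $a_5=0$ forces $s_1=s_3=0$, and then $\cos 3\theta\neq0$ forces $s_2=0$, contradicting the standing requirement~(\ref{1120-3}). So no $G(s_1,s_2,s_3)$ has the $\theta$-symmetric spectrum.

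\textbf{Main obstacle.} In contrast with Proposition~\ref{1122-4}, the number $9$ is not prime, so one cannot simply invoke that a $\MB{Q}$-linear relation among the powers of $\zeta_9$ appearing in $a_5$ must be trivial. The workaround is to exploit the divisor $3$ of $9$: it makes $\cos 3\theta$ rational, which removes the middle term and leaves a linear-independence question in the degree-$3$ field $\MB{Q}(\cos\theta)$, resolved by the explicit Chebyshev reduction above. The remaining bookkeeping (confirming that the girth is $5$, that the only $5$-vertex elementary subgraphs are $C_5$'s, and carrying out the reduction cleanly) is routine.
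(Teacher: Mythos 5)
Your proof is correct, and it reaches the conclusion by a somewhat different route than the paper. The paper treats both parts by one uniform computation: it rewrites $a_{2k-1}$ as $(-1)^{l+1}g(\zeta_9^l)$ with $g(x)=\sum_{j}s_j\bigl(x^{4+j}+x^{5-j}\bigr)$, reduces $g$ modulo the ninth cyclotomic polynomial $x^6+x^3+1$, and, using that this is the minimal polynomial of $\zeta_9^l$, reads off that $g(\zeta_9^l)=0$ forces $s_1=s_3=s_4$ and $s_2=0$ when $k=4$ (which both produces $G(1,0,1,1)$ and characterizes all admissible $(s_1,\dots,s_4)$), and forces $s_1=s_2=s_3=0$ when $k=3$. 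You instead split the two assertions: for $G(1,0,1,1)$ you note that the six exponentials $e^{\pm i\theta}, e^{\pm 5i\theta}, e^{\pm 7i\theta}$ are exactly the primitive $d$-th roots of unity ($d=9$ or $18$ according to the parity of $l$), whose sum vanishes; for $G(s_1,s_2,s_3)$ you work in the degree-$3$ real subfield $\MB{Q}(\cos\theta)$, exploit that $\cos 3\theta=\pm\tfrac12$ is a nonzero rational, and use the Chebyshev reduction $\cos 5\theta=\pm 2\cos^2\theta-\cos\theta\mp 1$ to get $\MB{Q}$-independence of $1,\cos\theta,\cos 5\theta$, so that $a_5=0$ already forces $s_1=s_2=s_3=0$ without any appeal to nonnegativity (in contrast to the $m=12$ case). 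Both arguments rest on the same cyclotomic field of degree $6$, so this is a variation rather than a new idea; the paper's single reduction additionally yields the full description of which book graphs of odd circumference $7$ have the $\theta$-symmetric spectrum, while your version is more self-contained in each piece and, helpfully, makes explicit the verification of hypotheses (a) and (b) of Lemma~\ref{1109-1} for $G(1,0,1,1)$ (with $m=4$ and $F=2P_5$), which the paper leaves implicit in the phrase ``we only have to show whether $a_{2k-1}=0$ or not.'' The computational details you state (the coefficient $a_7=-2(\cos\theta+\cos 5\theta+\cos 7\theta)$, the residues coprime to $9$ and $18$, the vanishing $x^5$-coefficients of $\Phi_9$ and $\Phi_{18}$, and the cubic relation coming from $\cos 3\theta$) all check out.
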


\begin{proof}
Let us consider whether $G(s_{1},\dots ,s_{k})$ has the $\theta$-symmetric 
spectrum for $k=3$ or $4$; the odd circumference is $5$ or $7$, respectively. 
Let $\zeta_{18} = e^{\frac{2\pi}{18}i}$ and $\zeta_{9} = e^{\frac{2\pi}{9}i}$. 
We only have to show whether $a_{2k-1}=0$ or not. By the same arguments 
as in the proof of Proposition~\ref{1122-4}, we have 
\begin{align}\label{YuH1220-1}
a_{2k-1} &= -2\sum_{j=1}^{k} s_{j} \cos (2j-1)\theta \notag \\
&= -\sum_{j=1}^{k}s_{j} (\zeta_{18}^{(2j-1)l} + \zeta_{18}^{-(2j-1)l}) \notag\\
&=(-1)^{l+1}\zeta_{18}^{9l}
\sum_{j=1}^{k}s_{j} (\zeta_{18}^{(2j-1)l} + \zeta_{18}^{-(2j-1)l}) \notag\\
&=(-1)^{l+1}\sum_{j=1}^{k}s_{j} ((\zeta_{9}^{l})^{4+j} + (\zeta_{9}^{l})^{5-j}) 
\end{align}
Put $x=\zeta_{9}^{l}$ and 
$g(x) = \sum_{j=1}^{k}s_{j} (x^{4+j} + x^{5-j})$. 
Then $a_{2k-1}=0$ if and only if $g(x)=0$.
Note that, since $x=\zeta_{9}^{l}$ is a $9$-th primitive root of the unity, 
it satisfies $f(x)=0$, 
where $f(x)=x^{6} + x^{3} +1$ is the $9$-th cyclotomic polynomial. 
Using the relations $x^6 = -x^3-1$, $x^7 = -x^4 -x$ and $x^8 = -x^5 -x^2$, 
 we have, for $k=4$, 
\begin{equation}\label{YuH1220-2}
g(x) = (s_{1}-s_{4})x^{5}+(s_{1}-s_{3})x^{4}+(s_{3}-s_{4})x^{2}+
(-s_{3}+s_{4})x -s_{2}. 
\end{equation} 
Since $f(x)$ is the $\MB{Q}$-minimal polynomial of $\zeta_9$, 
$g(x)=0$ if and only if $s_{1}=s_{3}=s_{4}$ and $s_{2}=0$.
In particular, choosing $s_{1}=s_{3}=s_{4}=1$ and $s_{2}=0$, 
we find $G(1,0,1,1)$, whose odd circumference is $7$, has 
the $\theta$-symmetric spectrum.
On the other hand, for $k=3$, putting $s_{4}=0$ in (\ref{YuH1220-2}), 
we have 
\begin{equation}\label{YuH1220-3}
g(x) = s_{1}x^{5}+(s_{1}-s_{3})x^{4}+s_{3}x^{2}- s_{3}x -s_{2}.
\end{equation} 
Thus $g(x)=0$ if and only if $s_{1}=s_{2}=s_{3}=0$. 
It implies that any $G(s_{1},s_{2},s_{3})$, whose odd circumference is $5$, 
does not have the $\theta$-symmetric spectrum. 
\end{proof}

The last remaining case is $m = 12$.
So far, we have effectively used field theoretical techniques to show that
our oriented book graphs with small odd circumferences do not have the $\theta$-symmetric spectra.
However, this approach is not always valid.
An example to puzzle us appears in this case.

\begin{pro} \label{1203-1}
{\it
Let $\theta = \frac{l}{12}\pi$ be an irreducible angle.
Then, $G(s_1, s_2, s_3)$,
whose odd circumference is $5$,
does not have the $\theta$-symmetric spectrum.
}
\end{pro}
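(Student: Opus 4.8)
The plan is to prove that the coefficient $a_5$ of $x^{n-5}$ in the characteristic polynomial of $H_{\theta}(G(s_1,s_2,s_3))$ is nonzero, where $n$ denotes the number of vertices; since $5=2\cdot 3-1$ is odd, Lemma~\ref{0926-3} then immediately gives that $G(s_1,s_2,s_3)$ does not have the $\theta$-symmetric spectrum.

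First I would pin down $\MC{H}_5(G(s_1,s_2,s_3))$. By construction, $\G(G(s_1,s_2,s_3))$ is a book graph obtained from $s_1+s_2+s_3$ copies of $C_5$ glued along the edge $\{x,y\}$; since every vertex other than $x,y$ has degree $2$, the only cycles are a single ``sheet'' together with the edge $\{x,y\}$ (length $5$) or two different sheets without that edge (length $8$), so the girth is $5$ and every odd cycle has length exactly $5$. Hence an elementary subgraph on $5$ vertices can only be one of the $s_1+s_2+s_3$ sheets, and the sheet through $z_j^{(k)}$ has magnetic flux $\pm(2j-1)\theta$, so its real part is $\cos(2j-1)\theta$. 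Theorem~\ref{0923-4} then yields
\[
a_5 = -2\bigl( s_1\cos\theta + s_2\cos 3\theta + s_3\cos 5\theta \bigr),
\]
and it remains to show that $s_1\cos\theta + s_2\cos 3\theta + s_3\cos 5\theta \neq 0$ for all nonnegative integers $s_1,s_2,s_3$ with $s_1+s_2+s_3\ge 1$ (recall~(\ref{1120-3})).

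Next I would exploit the arithmetic of $\theta=\frac{l}{12}\pi$. Irreducibility forces $l\in\{1,5,7,11\}$, and for each such $l$ the product-to-sum identities
\begin{align*}
\cos\theta + \cos 5\theta &= 2\cos 3\theta\cos 2\theta, &
\cos\theta - \cos 5\theta &= 2\sin 3\theta\sin 2\theta,
\end{align*}
together with $\cos 3\theta=\pm\tfrac{\sqrt2}{2}$, $\cos 2\theta=\pm\tfrac{\sqrt3}{2}$, $\sin 3\theta=\pm\tfrac{\sqrt2}{2}$, $\sin 2\theta=\pm\tfrac12$, show that $\cos\theta+\cos 5\theta=\pm\tfrac{\sqrt6}{2}$, $\cos\theta-\cos 5\theta=\pm\tfrac{\sqrt2}{2}$, while $\cos 3\theta$ is a rational multiple of $\sqrt2$. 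Expanding $s_1\cos\theta+s_2\cos 3\theta+s_3\cos 5\theta$ in the $\MB{Q}$-linearly independent system $\{\sqrt2,\sqrt6\}$, the coefficient of $\sqrt6$ is $\pm\tfrac14(s_1+s_3)$. If the sum vanished, then $s_1+s_3=0$, hence $s_1=s_3=0$ by nonnegativity; the sum then reduces to $s_2\cos 3\theta=\pm\tfrac{s_2\sqrt2}{2}$, forcing $s_2=0$ as well, which contradicts $s_1+s_2+s_3\ge 1$. Therefore $a_5\neq 0$.

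I expect the conceptual heart — and the reason the purely field-theoretic arguments of Proposition~\ref{1122-4} and of the $m=9$ case do not transplant verbatim — to be that $\cos\theta,\cos 3\theta,\cos 5\theta$ are \emph{not} $\MB{Q}$-linearly independent here (for instance $\cos\theta=\cos 3\theta+\cos 5\theta$ when $\theta=\tfrac{\pi}{12}$), so one cannot conclude $s_1=s_2=s_3=0$ from $a_5=0$ by a minimal-polynomial degree count alone. The remedy is to isolate the two-dimensional space $\MB{Q}\sqrt2\oplus\MB{Q}\sqrt6$ containing the three cosines and to read off the $\sqrt6$-component, which is a \emph{nonnegative} combination of $s_1,s_2,s_3$; it is the interplay of this relation with the nonnegativity constraint~(\ref{1120-3}) that closes the argument. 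A more uniform variant, closer to the style of this section, would set $\zeta=e^{i\pi/12}$ (so $\zeta^{12}=-1$) and rewrite $a_5$ as a $\MB{Z}$-combination of $\zeta^{l},\zeta^{3l},\zeta^{5l},\zeta^{7l}$, which are $\MB{Q}$-linearly independent because $\Phi_{24}(x)=x^8-x^4+1$ lets one express the remaining odd powers of $\zeta$ in terms of these four; one then again invokes nonnegativity of the $s_j$, the only mildly tedious point being the case analysis over $l\in\{1,5,7,11\}$.
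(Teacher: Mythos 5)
Your proposal is correct and follows essentially the same route as the paper: both compute $a_5=-2(s_1\cos\theta+s_2\cos3\theta+s_3\cos5\theta)$ via Theorem~\ref{0923-4}, split this value over a two-dimensional $\MB{Q}$-independent system (your $\{\sqrt2,\sqrt6\}$ is the paper's $\{1,\sqrt3\}$ after clearing the common factor $1/\sqrt2$), and then use nonnegativity of the $s_j$ together with~(\ref{1120-3}) to rule out $a_5=0$. The only cosmetic difference is that you track signs to cover all $l\in\{1,5,7,11\}$ explicitly, while the paper writes out $l=1$ and notes the other cases are identical; your closing remark that linear dependence of the cosines forces the nonnegativity argument matches the paper's own comment after its proof.
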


\begin{proof}
For simplicity, we show the case $l=1$,
but our proof is exactly the same for other $l$.
Let $\theta = \frac{1}{12}\pi$ and $G = G(s_1, s_2, s_3)$.
We would like to derive a contradiction by assuming that $G$ has the $\theta$-symmetric spectrum.
Let $\Psi(x) = \sum_{j=0}^n a_j x^{n-j}$ be the characteristic polynomial of $H_{\theta}(G)$,
where $n$ is the number of vertices of $G$.
Since $G$ has the $\theta$-symmetric spectrum,
we have
\begin{equation} \label{1122-1}
a_{5} = 0
\end{equation}
by Lemma~\ref{0926-3}.
On the other hand, from Theorem~\ref{0923-4}, we have
\begin{align*}
a_5 &= -2(s_1 \cos \theta + s_2 \cos 3\theta + s_3 \cos 5 \theta) \\
&= -2\left( s_1 \cos \frac{\pi}{12} + s_2 \cos \frac{3}{12}\pi + s_3 \cos \frac{5}{12}\pi \right) \\
&= -2\left( s_1 \frac{\sqrt{3}+1}{2\sqrt{2}} + s_2 \frac{1}{\sqrt{2}} + s_3 \frac{\sqrt{3}-1}{2\sqrt{2}} \right).
\end{align*}
Combining with Equality~(\ref{1122-1}) yields
$s_1 + 2s_2 - s_3 + (s_1 + s_3)\sqrt{3} = 0$.
Since $1$ and $\sqrt{3}$ are $\MB{Q}$-linearly independent, we have
\begin{align}
s_1 + 2s_2 - s_3 &= 0, \label{1122-3} \\
s_1 + s_3 &= 0. \label{1122-2}
\end{align}
Equality~(\ref{1122-2}) and $s_1, s_3 \geq 0$ lead to $s_1 = s_3 = 0$.
From this and Equality~(\ref{1122-3}), we have $s_2 = 0$.
This contradicts~(\ref{1120-3}).
\end{proof}

An interesting point in the above proof is that
even though $\cos \frac{\pi}{12}, \cos \frac{3}{12}\pi, \cos \frac{5}{12}\pi$ are $\MB{Q}$-linearly dependent,
we obtain $s_1 = s_2 = s_3 = 0$ because $s_1, s_2, s_3$ are non-negative.
This implies that strategies based only on field theoretical techniques do not generally provide lower bounds of odd circumferences.

In summary, Table~\ref{1228-1} is updated as shown in Table~\ref{1230-1}.
The values in Table~\ref{1119-3} are optimal for small $m$ except for $m=9$.
On the other hand, for $m=9$,
the optimal value is $7$ while the value in Table~\ref{1119-3} is $9$.
Table~\ref{1119-3} is not perfect, but not bad either.

\begin{table}[ht]
  \centering
  \begin{tabular}{|c||c|c|c|c|c|c|c|c|c|c|c|c} \hline
$m$ &3&4&5&6&7&8&9&10&11&12&13&$\cdots$ \\ \hline
Value in Table~\ref{1119-3} &3&3&5&3&7&5&9&5&11&7&13&$\cdots$ \\ \hline
Min. odd circumference &3&3&5&3&7&5&7&5&11&7&13&$\cdots$ \\ \hline
  \end{tabular}
  \caption{Minimum odd circumferences in small $m$ for an irreducible angle $\theta = \frac{l}{m}\pi$}
  \label{1230-1}
\end{table}

\section*{Acknowledgements}
We would like to thank Dr.~Yuta Nozaki for providing us some information on transcendental number theory.
Yu.H. acknowledges financial supports from the Grant-in-Aid of Scientific Research (C) Japan Society for the Promotion of Science (Grant No. 18K03401).
S.K. is supported by JSPS KAKENHI (Grant No. 20J01175).
E.S. acknowledges financial supports from the Grant-in-Aid of Scientific Research (C) Japan Society for the Promotion of Science (Grant No. 19K03616) and Research Origin for Dressed Photon.

\end{document}